\definecolor{hanblue}{rgb}{0.27, 0.42, 0.81}
\DeclarePairedDelimiter{\norm}{\lVert}{\rVert}
\DeclarePairedDelimiter{\pair}{\langle}{\rangle}
\DeclareMathOperator{\dom}{\operatorname{dom}}
\DeclareMathOperator*{\argmin}{arg\,min}
\newcommand{\N}{\mathbb{N}}
\newcommand{\R}{\mathbb{R}}
\newcommand{\Pb}{\mbox{\rm (P) }}
\newcommand{\optu}{\bar{u}}
\newtheorem{mydef}{Definition}
\newcommand{\cnorm}[1]{\|#1\|_{\mathcal{C}}}
\newcommand{\tr}[1]{tr(C(1)^{-1})}
\newcommand{\eps}{\varepsilon}
\newcommand{\vertiii}[1]{{\left\vert\kern-0.25ex\left\vert\kern-0.25ex\left\vert #1
    \right\vert\kern-0.25ex\right\vert\kern-0.25ex\right\vert}}
\newcommand{\unorm}[1]{\norm{#1}_{\mathcal{U}}}
\newcommand{\stnorm}[1]{\norm{#1}_{*}}
\DeclareFontFamily{OMX}{MnSymbolE}{}
\DeclareSymbolFont{MnLargeSymbols}{OMX}{MnSymbolE}{m}{n}
\DeclareFontShape{OMX}{MnSymbolE}{m}{n}{
    <-6>  MnSymbolE5
   <6-7>  MnSymbolE6
   <7-8>  MnSymbolE7
   <8-9>  MnSymbolE8
   <9-10> MnSymbolE9
  <10-12> MnSymbolE10
  <12->   MnSymbolE12
}{}
\DeclareFontShape{OMX}{MnSymbolE}{b}{n}{
    <-6>  MnSymbolE-Bold5
   <6-7>  MnSymbolE-Bold6
   <7-8>  MnSymbolE-Bold7
   <8-9>  MnSymbolE-Bold8
   <9-10> MnSymbolE-Bold9
  <10-12> MnSymbolE-Bold10
  <12->   MnSymbolE-Bold12
}{}
\let\llangle\@undefined
\let\rrangle\@undefined
\DeclareMathDelimiter{\llangle}{\mathopen}%
                     {MnLargeSymbols}{'164}{MnLargeSymbols}{'164}
\DeclareMathDelimiter{\rrangle}{\mathclose}%
                     {MnLargeSymbols}{'171}{MnLargeSymbols}{'171}
\newcommand*\rel@kern[1]{\kern#1\dimexpr\macc@kerna}
\newcommand*\widebar[1]{%
  \begingroup
  \def\mathaccent##1##2{%
    \rel@kern{0.8}%
    \overline{\rel@kern{-0.8}\macc@nucleus\rel@kern{0.2}}%
    \rel@kern{-0.2}%
  }%
  \macc@depth\@ne
  \let\math@bgroup\@empty \let\math@egroup\macc@set@skewchar
  \mathsurround\z@ \frozen@everymath{\mathgroup\macc@group\relax}%
  \macc@set@skewchar\relax
  \let\mathaccentV\macc@nested@a
  \macc@nested@a\relax111{#1}%
  \endgroup
}
\numberwithin{equation}{section}
\definecolor{darkred}{rgb}{.7,0,0}
\definecolor{green}{rgb}{0,0.7,0}
\theoremstyle{plain}
\newtheorem{theorem}{Theorem}[section]
\newtheorem{lemma}[theorem]{Lemma}
\newtheorem{proposition}[theorem]{Proposition}
\theoremstyle{definition}
\newtheorem{assumption}[theorem]{Assumption}
\newtheorem{example}[theorem]{Example}
\newtheorem{remark}[theorem]{Remark}
\begin{document}
\title[Fast generalized conditional gradient]{On fast convergence rates for generalized conditional gradient methods with backtracking stepsize}

\pagestyle{myheadings}

 \author[K. Kunisch]
 {Karl Kunisch}
 \address[Karl Kunisch]{University of Graz, Institute of Mathematics and Scientific Computing, Heinrichstra\ss e 36, 8010 Graz, Austria}
\email{karl.kunisch@uni-graz.at}

\author[D. Walter]{Daniel Walter}
\address[Daniel Walter]{Johann Radon Institute for Compuational and Applied Mathematics, Altenberger Stra\ss e 69, 4040 Linz}
 \email{daniel.walter@oeaw.ac.at}
 \thanks{Karl Kunisch was supported by the ERC advanced grant 668998 (OCLOC) under the EU’s
H2020 research program.}

\begin{abstract}
A generalized conditional gradient method for minimizing the sum of two convex functions, one of them differentiable, is presented. This iterative method relies on two main ingredients: First, the minimization of a partially linearized objective functional to compute a descent direction and, second, a stepsize choice based on an Armijo-like condition to ensure sufficient descent in every iteration. We provide several convergence results. Under mild assumptions, the method generates sequences of iterates which converge, on subsequences, towards minimizers. Moreover a sublinear rate of convergence for the objective functional values is derived. Second, we show that the method enjoys improved rates of convergence if the partially linearized problem fulfills certain growth estimates. Most notably these results do not require strong convexity of the objective functional. Numerical tests on a variety of challenging PDE-constrained optimization problems confirm the practical efficiency of the proposed algorithm.
\end{abstract}
\subjclass{	90C25, 49J52,65K10 , 49M41 }
\keywords{optimization methods, generalized conditional gradient, nonsmooth optimization}

%\vfil
%\hfil Recent Advances in Data-Driven Inverse Problems and Learning \hfil
%\vfil
%\newpage
\maketitle

\section{Introduction} \label{sec:intro}
The conditional gradient method,~\cite{frank}, provides a simple tool for the solution of constrained minimization problems of the form
\begin{align}  \label{def:constprob}
\min_{u \in \mathcal{U}_{ad}} f(u) \tag{$\mathcal{P}_{\mathcal{U}_{ad}}$}
\end{align}
where~$\mathcal{U}_{ad}$ is a convex, compact subset of a Banach space~$\mathcal{U}$ and~$f$ is a smooth convex function. This  algorithm generates a sequence of iterates~$u^k$ by minimizing the linearization of~$f$ to obtain a descent direction and updating the iterate by forming a convex combination:
\begin{align} \label{CGvanilla}
v^k \in \argmin_{v \in \mathcal{U}_{ad}} \langle \nabla f(u^k),v \rangle,~u^{k+1}=u^k+s^k(v^k-u^k),~s^k\in [0,1].
\end{align}
While it is well-known that this method admits a~$\mathcal{O}(1/k)$ rate of convergence,~\cite{jaggi}, it comes with two desirable properties which outweigh its poor theoretical performance. First, every iteration only requires the solution of a constrained linear minimization problem. This is often easier or computationally less demanding than e.g. the calculation of a projection. We refer to~\cite{pokuttacomplex} for some finite dimensional examples. Second, the descent direction~$v^k$ can always be chosen as an extremal point of~$\mathcal{U}_{ad}$. Depending on the particular constraint set~$\mathcal{U}_{ad}$, this might lead to iterates~$u^k$ exhibiting certain structural features such as sparsity or low rank in the early iterations. These advantages make algorithms of the form~\eqref{CGvanilla} attractive in a large variety of interesting problems. For examples we point  to~\cite{kerdreux,jaggi} and the references therein.

While the sublinear rate of this method is known to be tight in general,~\cite{cannon}, improved convergence results have already been derived in a variety of settings. For example, if~$f$ and~$\mathcal{U}_{ad}$ are strongly convex and the unique minimizer to~\eqref{def:constprob} lies in the interior of~$\mathcal{U}_{ad}$, then the algorithm converges linearly. The same holds true, in the strongly convex case, if~$\nabla f$ is bounded away from~$0$ on~$\mathcal{U}_{ad}$,~\cite{levitin, demyanov}. Moreover~\cite{garber} provides a~$\mathcal{O}(1/k^2)$ rate of convergence in the strongly convex case~\textit{without} further structural assumptions.
The results in~\cite{levitin, demyanov} were later generalized by~\cite{Dunn79, dunnimplicit, kerdreux} which show that improved convergence rates can be obtained without strong convexity assumptions if growth conditions of the form
\begin{align*}
 \langle \nabla f(\bar{u}),u-\optu \rangle \geq \theta \norm{u-\optu}^q \quad \forall u \in \mathcal{U}_{ad}
\end{align*}
hold for some~$\theta>0$,~$q \geq 2$.

More recently, there has been an increased interest in generalizing conditional gradient methods to problems of the form
\begin{align}
\label{def:problem}
\min_{u \in \mathcal{U}} j(u) \coloneqq \left \lbrack f(u)+g(u) \right \rbrack\tag{$\mathcal{P}$}
\end{align}
where~$f$ is a smooth, convex function and~$g$ is convex but possibly nonsmooth. Problems of this form often appear e.g. in the context of optimal control and inverse problems with~$g$ representing a (nonsmooth) cost or regularization term which favours minimizers with certain structural properties. In this setting, the \textit{generalized conditional gradient} (GCG) method, possibly first stated in~\cite{bredies}, proceeds as follows:
\begin{align} \label{GCGvanilla}
v^k \in \argmin_{v \in \mathcal{U} } \left\lbrack \langle \nabla f(u^k),v \rangle +g(v) \right \rbrack ,~u^{k+1}=u^k+s^k(v^k-u^k),~s^k\in [0,1].
\end{align}
Here, the smooth part~$f$ of the objective functional is linearized around~$u^k$ while the nonsmooth part~$g$ remains unchanged. Variants of this method have been applied to great success in applications such as super-resolution,~\cite{denoyelle}, acoustic inversion,~\cite{tang}, and dynamic transport regularization,~\cite{fanzon}, which all go beyond the setting of~\eqref{def:constprob}.
Of course, if~$g=I_{\mathcal{U}_{ad}}$, the convex indicator function of~$\mathcal{U}_{ad}$,~\eqref{def:problem} reduces to~\eqref{def:constprob} and the original conditional gradient method is recovered. Sublinear rates of convergence for the GCG method have already been proven for a variety of stepsize choices. Without any pretense of completeness we refer to~\cite{bredies, yu,xu2017,rakotomamonjy} as well as~\cite[Chapter 13]{beckbook}.
Nevertheless literature on improved convergence results for~$g \neq I_{\mathcal{U}_{ad}} $ is scarce. We point to ~\cite{flinth,denoyelle,pieper,trautmann} for examples of~\textit{accelerated} GCG methods, i.e.~\eqref{GCGvanilla} with additional speed-up steps, which achieve linear, or even finite-step, convergence in particular settings. Faster convergence results \textit{without} augmentations to~\eqref{GCGvanilla}, however, seem not to be available.

In the present manuscript a first step is taken at closing this gap. More concretely, our contributions are as follows: We propose a GCG method in the spirit of~\eqref{GCGvanilla} where~$s^k$ is chosen based on an Armijo-like backtracking condition. This choice of the stepsize offers a good trade-off between computational effort and the necessary per-iteration descent to achieve convergence. As far as we know, GCG with backtracking stepsizes have only been briefly mentioned in~\cite[Remark 2]{bredies} and utilized in one of the author's earlier papers,~\cite{neitzel}, for one particular instance of Problem~\eqref{def:problem}. For this reason we start by proving the convergence of the method as well as a sublinear~$\mathcal{O}(1/k)$ rate of convergence for the objective functional values, see Theorem~\ref{thm:slowconvergence}. Subsequently, in Theorems~\ref{thm:fastconvergence1} and~\ref{thm:fastconvergence2}, we show that the method enjoys improved convergence rates if the partially linearized objective functional fulfills a growth condition of the form
\begin{align} \label{growthGCG}
\langle \nabla f(\bar{u}),u-\optu \rangle+g(u)-g(\optu) \geq \theta \norm{u-\optu}^q \quad \forall u \in \mathcal{U}_{ad}
\end{align}
around a minimizer~$\optu$. For~$q=2$, we obtain a linear~$\mathcal{O}(\lambda^k)$,~$0<\lambda <1$, rate of convergence, while the objective functional values converge at least with rate~$\mathcal{O}(1/k^{1/\beta})$,~$\beta=1-2/(q(q-1))$ for~$q>2$. This generalizes the results of~\cite{dunnimplicit,kerdreux} to the GCG method.
To the best of our knowledge these are the first convergence results for \eqref{GCGvanilla} which go~\textit{beyond} a sublinear convergence rate. Finally, we discuss the application of our algorithm to challenging examples from the literature, such as bang-bang-off or directional sparsity problems, and discuss the growth condition~\eqref{growthGCG} in this context. Numerical examples highlight the practical efficiency of the method and confirm our theoretical results.

The rest of the manuscript is structured as follows: In Section~\ref{sec:setting}, we argue the existence of minimizers to~\eqref{def:problem} under appropriate assumptions on the involved functionals and derive first order optimaliy conditions. Subsequently, in Section~\ref{sec:generalizedfrankwolfe}, the GCG method is presented and its individual steps are examined in detail. Finally, Section~\ref{sec:convergence} addresses the convergence behaviour of the algorithm. The paper is finished by Section~\ref{sec:applications} in which we discuss and showcase the practical realization of GCG methods for several examples from PDE-constrained optimization.
\section{Problem setting and optimality conditions} \label{sec:setting}
Throughout the paper we assume that there is a separable Banach space~$\mathcal{C}$ which is the topological predual space of~$\mathcal{U}$, i.e.~$\mathcal{C}^* \simeq \mathcal{U}$. The corresponding duality pairing is denoted by~$\langle \cdot,\cdot \rangle$. This makes~$\mathcal{U}$ a Banach space with respect to the canonical norm
\begin{align*}
\unorm{u}= \sup_{\cnorm{\varphi}\leq 1} \langle \varphi,u \rangle  \quad \text{for all } u \in \mathcal{U}.
\end{align*}
A sequence~$u^k \in \mathcal{U}$,~$k\in\N$, is called weak* convergent with limit~$\optu$,~denoted by~$u^k \rightharpoonup^* \optu$, if
\begin{align*}
\langle \varphi, u^k \rangle \rightarrow \langle \varphi,\optu \rangle \quad \forall \varphi \in \mathcal{C}.
\end{align*}
Moreover the following assumptions concerning Problem~\eqref{def:problem} are made.
\begin{assumption} \label{ass:functions}\phantom{ }

\begin{itemize}
\item[$\mathbf{A1}$.] The function~$f \colon \mathcal{U} \to \mathbb{R} $ is convex, weak* continuous and G\^{a}teaux-differentiable. Moreover there exists a weak*-to-strong continuous mapping~$\nabla f \colon \mathcal{U} \to \mathcal{C}$ with
\begin{align*}
f'(u)(\delta u)= \langle \nabla f(u),\delta u \rangle \quad \forall u, \delta u \in \mathcal{U}.
\end{align*}
\item[$\mathbf{A2}$.] The function~$g \colon \mathcal{U} \to \bar{\mathbb{R}}_+ $ is proper, convex, weak* lower semicontinuous, and coercive, i.e.
\begin{align*}
\unorm{u} \rightarrow +\infty \Rightarrow g(u)/\unorm{u} \rightarrow +\infty.
\end{align*}
\item[$\mathbf{A3}$.] The functional~$j= f+g$ is radially unbounded and satisfies~$\inf_{u \in \mathcal{U}} j(u)>-\infty$.
\end{itemize}
\end{assumption}
It is also convenient to introduce for each~$u\in \mathcal{U}$ the sublevel set
\begin{align} \label{def:sublevelset}
E_j(u)=\left\{\,v \in \mathcal{U}\;|\;j(v)\leq j(u)\,\right\}.
\end{align}
Using Assumption~\ref{ass:functions}, we see that~$E_j(u)$ is weak* sequentially compact if~$j(u)<+\infty$.
This also guarantees the existence of at least one minimizer to~\eqref{def:problem}.
\begin{proposition}
\label{prop:existence}
Let Assumption~\ref{ass:functions} hold. Then the set~$E_j(u)$ is weak* (sequentially) compact for every~$u\in \dom j$. Moreover there exists at least one minimizer~$\optu$ to~\eqref{def:problem}.
\end{proposition}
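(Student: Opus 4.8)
The plan is to apply the direct method of the calculus of variations, for which the two assertions of the proposition are precisely the required ingredients. The backbone is the observation that $j=f+g$ is weak* sequentially lower semicontinuous: by~$\mathbf{A1}$ the map $f$ is weak* continuous, while by~$\mathbf{A2}$ the map $g$ is weak* lower semicontinuous, and the sum of a weak* continuous function and a weak* lower semicontinuous one is again weak* lower semicontinuous. I would record this first, since it feeds into both claims. Concretely, if $v_n\weakstar v$ then $f(v_n)\to f(v)$ and $g(v)\leq\liminf_n g(v_n)$, so $j(v)=f(v)+g(v)\leq\liminf_n\left(f(v_n)+g(v_n)\right)=\liminf_n j(v_n)$.

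For the compactness of $E_j(u)$ I would argue boundedness and sequential closedness separately and then invoke Banach--Alaoglu. Boundedness follows from the radial unboundedness in~$\mathbf{A3}$: were $E_j(u)$ to contain a sequence $v_n$ with $\unorm{v_n}\to\infty$, then $j(v_n)\to+\infty$, contradicting $j(v_n)\leq j(u)<+\infty$. Hence $E_j(u)$ lies in a closed ball of $\mathcal{U}=\mathcal{C}^*$, which is weak* sequentially compact because $\mathcal{C}$ is separable, so that the weak* topology is metrizable on bounded sets. Sequential closedness is then immediate from the lower semicontinuity above: for $v_n\in E_j(u)$ with $v_n\weakstar v$ the estimate $j(v)\leq\liminf_n j(v_n)\leq j(u)$ gives $v\in E_j(u)$. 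A weak* sequentially closed subset of a weak* sequentially compact ball is itself weak* sequentially compact.

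For existence I would run the direct method. Since $g$ is proper, $\dom j=\dom g$ is nonempty, so I may fix some $u_0\in\dom j$ and set $m=\inf_{u\in\mathcal{U}}j(u)$, which is finite by~$\mathbf{A3}$. If $j(u_0)=m$ we are done; otherwise, choosing a minimizing sequence $u_n$ with $j(u_n)\to m<j(u_0)$ and discarding finitely many terms, I may assume $u_n\in E_j(u_0)$ for all $n$. By the compactness just established, a subsequence satisfies $u_{n_k}\weakstar\optu$ with $\optu\in E_j(u_0)$, and lower semicontinuity yields $m\leq j(\optu)\leq\liminf_k j(u_{n_k})=m$, so $\optu$ is a minimizer of $j$.

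There is no genuine obstacle here; the only points requiring care are (i) using~$\mathbf{A3}$ rather than the coercivity of $g$ alone to obtain boundedness of the sublevel sets, since $f$ is not assumed bounded below, and (ii) invoking the separability of $\mathcal{C}$ to pass from weak* compactness of balls (Banach--Alaoglu) to weak* \emph{sequential} compactness, which is what legitimizes the subsequence extraction throughout.
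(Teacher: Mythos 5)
Your proposal is correct and follows essentially the same route as the paper: establish weak* lower semicontinuity of $j$ from $\mathbf{A1}$ and $\mathbf{A2}$, obtain boundedness of $E_j(u)$ from radial unboundedness in $\mathbf{A3}$, extract a weak* convergent subsequence via the sequential Banach--Alaoglu theorem, and conclude both sequential compactness and existence by the direct method. The only difference is one of exposition: you spell out the direct-method argument and the role of separability of $\mathcal{C}$, which the paper compresses into a reference to ``the standard method of calculus of variations.''
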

\begin{proof}
First observe that~$j$ is weak* lower semicontinuous, see Assumption~\ref{ass:functions}~$\mathbf{A1}$ and~$\mathbf{A2}$. Now let~$v_k \in E_j(u)$,~$k\in\N$, be an arbitrary sequence.
Since~$j$ is radially unbounded, Assumption~\ref{ass:functions}~$\mathbf{A3}$,~$E_j(u)$, and thus also~$v_k$, is uniformly bounded in $\mathcal{U}$. Hence, due to the sequential Banach-Alaoglu theorem, there is~$\bar{v}\in \mathcal{U}$ and a subsequence of~$v_k$,~$k\in\N$, denotes by the same subscript with~$v_k \rightharpoonup^* \bar{v}$. The weak* lower semicontinuity of~$j$ now yields
\begin{align*}
j(\bar{v}) \leq \liminf_{k \rightarrow \infty} j(v_k) \leq j(u)
\end{align*}
and thus~$\bar{v} \in E_j(u)$. Consequently~$E_j(u)$ is weak* sequentially compact. The existence of a minimizer to~\eqref{def:problem} now follows by the standard method of calculus of variations.
\end{proof}
In addition to Assumption~\ref{ass:functions} we will require the following Lipschitz-like condition on~$\nabla f$  throughout the paper.
\begin{assumption} \label{ass:lipschitz}
There is a norm~$\norm{\cdot}_*$ on~$\mathcal{U}$ such that:
\begin{itemize}
\item[\textbf{A4}] Every sequence~$u_k \in \mathcal{U}$,~$k\in\N$, which is bounded w.r.t.~$\unorm{\cdot}$ is also bounded w.r.t.~$\stnorm{\cdot}$.
\item[\textbf{A5}] For every~$u \in \mathcal{U}$ there exists~$L_u>0$ only depending on~$E_j(u)$ such that we have
\begin{align*}
\langle \nabla f(u_1)-\nabla f(u_2), u_3-u_4 \rangle \leq L_u \stnorm{u_1-u_2} \stnorm{u_3-u_4}
\end{align*}
for all~$u_1,u_2 \in E_j(u),~u_3,u_4 \in \dom j$.
\end{itemize}
\end{assumption}
In the analysis of~\eqref{def:problem}, a major role is played by its first order necessary and sufficient optimality conditions which, see e.g.~\cite[Section 27]{rockafellar}, are given by the variational inequality
\begin{align} \label{eq:subdiff}
\langle \nabla f(\optu), \optu-u \rangle+ g(\optu) \leq g(u) \quad \forall u \in \mathcal{U}.
\end{align}
For our purposes it will be beneficial to make a connection between~\eqref{eq:subdiff} and the~\textit{gap functional}
\begin{align} \label{def:gap}
\Psi(u)= \sup_{v \in \mathcal{U}}\left \lbrack \langle \nabla f(u),u-v \rangle+g(u)-g(v) \right\rbrack \geq 0.
\end{align}
It turns out that its zeros are precisely the minimizers of~\eqref{def:problem}. Moreover~$\Psi(u)$ gives an upper bound on the suboptimality of~$u \in \mathcal{U} $.
\begin{theorem}
\label{thm:optimality}
Let~$\bar{u} \in \mathcal{U}$ be given. Then~$\optu$ satisfies the first order optimality condition~\eqref{eq:subdiff} if and only if~$\Psi(\optu)=0$. Additionally, for every~$u \in \dom j $ we have
\begin{align} \label{eq:boundforres}
j(u)-\min_{u \in \mathcal{U}} j(u) \leq \Psi(u),
\end{align}
and~$\Psi$ is weak* lower semicontinuous.
\end{theorem}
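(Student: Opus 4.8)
The plan is to prove the theorem in three parts, matching the three assertions in the statement: the characterization of minimizers via $\Psi(\optu)=0$, the suboptimality bound~\eqref{eq:boundforres}, and the weak* lower semicontinuity of $\Psi$. The central observation is that the gap functional $\Psi$ is nothing but a rewriting of the optimality condition~\eqref{eq:subdiff}. Indeed, taking $v=\optu$ in the supremum defining $\Psi(\optu)$ shows the bracket vanishes there, so $\Psi(\optu)\geq 0$ always, consistent with~\eqref{def:gap}. Conversely, $\Psi(\optu)=0$ means that for every $v\in\mathcal{U}$ the bracket $\langle \nabla f(\optu),\optu-v\rangle+g(\optu)-g(v)$ is $\leq 0$, which is exactly~\eqref{eq:subdiff} after rearranging. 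This is a direct equivalence requiring no real machinery, only careful bookkeeping of the sign conventions in the definition of the supremum.

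First I would establish the equivalence. Assume $\optu$ satisfies~\eqref{eq:subdiff}, i.e.\ $\langle \nabla f(\optu),\optu-u\rangle+g(\optu)-g(u)\leq 0$ for all $u\in\mathcal{U}$. Then every term in the supremum~\eqref{def:gap} is nonpositive, so $\Psi(\optu)\leq 0$; combined with $\Psi(\optu)\geq 0$ this forces $\Psi(\optu)=0$. The reverse direction is immediate: if $\Psi(\optu)=0$, then the supremum is attained-or-bounded by zero, so each individual bracket is $\leq 0$, which is~\eqref{eq:subdiff}. Second, for the suboptimality bound, I would fix $u\in\dom j$ and let $\optu$ denote a minimizer (whose existence is guaranteed by Proposition~\ref{prop:existence}). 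By convexity of $f$ we have the subgradient-type inequality $f(\optu)\geq f(u)+\langle \nabla f(u),\optu-u\rangle$, hence
\begin{align*}
j(u)-j(\optu)&=f(u)-f(\optu)+g(u)-g(\optu)\\
&\leq \langle \nabla f(u),u-\optu\rangle+g(u)-g(\optu)\leq \Psi(u),
\end{align*}
where the last inequality follows by choosing $v=\optu$ in the supremum~\eqref{def:gap}. Since $j(\optu)=\min_{u\in\mathcal{U}}j(u)$, this is precisely~\eqref{eq:boundforres}.

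For the third assertion, weak* lower semicontinuity of $\Psi$, the idea is that $\Psi$ is a supremum over $v$ of functions that are themselves weak* lower semicontinuous in $u$. For fixed $v$, the map $u\mapsto \langle \nabla f(u),u-v\rangle+g(u)-g(v)$ is weak* lower semicontinuous: the term $g(u)$ is weak* lower semicontinuous by Assumption~\ref{ass:functions}~$\mathbf{A2}$, and the term $\langle \nabla f(u),u-v\rangle$ is weak* continuous because $\nabla f$ is weak*-to-strong continuous (Assumption~\ref{ass:functions}~$\mathbf{A1}$) while $u\mapsto u-v$ is weak* continuous, so the pairing of a strongly convergent sequence with a weak* convergent one converges. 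A pointwise supremum of weak* lower semicontinuous functions is again weak* lower semicontinuous, which yields the claim. The main obstacle, and the step deserving the most care, is the weak* continuity of the bilinear-looking term $\langle \nabla f(u),u-v\rangle$: one must verify that along a weak* convergent sequence $u^n\weakstar u$, both the convergence $\nabla f(u^n)\to\nabla f(u)$ in $\mathcal{C}$ and $u^n-v\weakstar u-v$ in $\mathcal{U}$ can be combined to pass to the limit in the duality pairing, which is exactly the standard argument that the pairing is continuous when one factor converges strongly and the other weak*.
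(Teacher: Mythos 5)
Your proposal is correct and follows essentially the same route as the paper: the equivalence is the same sign-bookkeeping argument, the bound~\eqref{eq:boundforres} uses the identical convexity inequality for $f$ followed by bounding with the supremum, and your "pointwise supremum of weak* lsc functions" argument for the third part is just a repackaging of the paper's direct $\liminf$ computation, with the same key step of pairing the strongly convergent $\nabla f(u^k)$ against the weak* convergent (hence bounded) $u^k-v$. No gaps.
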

\begin{proof}
Assume that~$\bar{u}$ fulfills~\eqref{eq:subdiff}. Reordering yields
\begin{align*}
\langle \nabla f(\bar{u}), \bar{u} - u \rangle + g(\bar{u}) - g(u) \leq 0  \quad \forall u \in \mathcal{U}.
\end{align*}
Maximizing with respect to~$u\in \mathcal{U}$, on both sides we conclude~$\Psi(\bar{u})\leq 0$. Since~$\Psi$  assumes non-negative values only, the statement follows.
Conversely if~$\bar{u}$ fulfills~$\Psi(\bar{u})=0$ we readily obtain
\begin{align*}
\langle \nabla f(\bar{u}), \bar{u}\rangle +g(\bar{u}) \leq \langle \nabla f(\bar{u}), u\rangle +g(u) \quad  \forall u \in \mathcal{U}
\end{align*}
By rearranging both sides we arrive at~\eqref{eq:subdiff}. Next we prove~\eqref{eq:boundforres}. Let~$u \in \dom j$ be given and let~$\optu$ denote a minimizer of~\eqref{def:problem}. From the convexity of~$f$ we readily obtain
\begin{align*}
j(u)-\min_{u \in \mathcal{U}}j(u)=j(u)-j(\bar{u}) \leq \langle \nabla f(u),u-\bar{u}\rangle+ g(u)-g(\bar{u}).
\end{align*}
The right hand side is estimated by
\begin{align*}
\langle \nabla f(u),u-\bar{u}\rangle+ g(u)-g(\bar{u})\leq  \sup_{v\in \mathcal{U}} \left \lbrack \langle \nabla f(u),u-v\rangle+ g(u)-g(v) \right\rbrack=\Psi(u),
\end{align*}
and thus \eqref{eq:boundforres} follows.

Finally, let a sequence~$u^k \rightharpoonup^* u$ be given.
For each $v\in \mathcal{U}$ we obtain
\begin{align*}
\Psi(u^k)\geq \langle \nabla f(u^k),u^k-v\rangle+g(u^k)-g(v).
\end{align*}
Taking the limes inferior for $k \rightarrow \infty$ on both sides of the inequality yields
\begin{align*}
\liminf_{k\rightarrow \infty}\Psi(u^k)\geq \langle \nabla f(\bar{u}),\bar{u}-v\rangle+g(\bar{u})-g(v),
\end{align*}
due to the weak* convergence of $u^k$ and the continuity properties of $\nabla f$ and $g$ in $\mathbf{A1}$ and~$\mathbf{A2}$. Since $v$ was chosen arbitrarily, we can maximize over all $v \in \mathcal{U}$ from which we conclude $\liminf_{k \rightarrow \infty} \Psi(u_k)\geq \Pi(\bar{u})$.
\end{proof}
\section{Generalized Frank-Wolfe methods with backtracking stepsize} \label{sec:generalizedfrankwolfe}
This section is devoted to the numerical solution of Problem~\eqref{def:problem}. For this purpose we adapt the generalized conditional gradient (GCG) method from~\cite{bredies} to the problem at hand. The proposed algorithm iteratively generates a sequence of iterates~$u^k$,~$k\in\N$, by alternating between solving a (partially) linearized problem to obtain a descent direction~$v^k$ and forming a convex combination to get the subsequent iterate:
\begin{align} \label{def:gcg}
v^k \in \argmin_{v \in \mathcal{U}} \left \lbrack \langle \nabla f(u^k),v \rangle+ g(v) \right \rbrack,~u^{k+1}=(1-s^k) u^k+ s^k v^k.
\end{align}
Here,~$s^k \in (0,1)$ is a suitably chosen stepsize. This iteration is repeated as long as~$\Psi(u^k)>0$, see Theorem~\ref{thm:optimality}. Note that if~$\mathcal{U}_{ad} \subset \mathcal{U}$ is a convex, compact (in a suitable sense) set and~$g(u)=I_{\mathcal{U}_{ad}}(u)$ is the associated indicator function, then~\eqref{def:gcg} reduces to the "classical" conditional gradient method described in~\cite{frank}. In the following, we briefly go over the individual steps of the method in more detail. First, the choice of the descent direction~$v^k$ according to~\eqref{def:gcg} is always well-defined.
\begin{proposition} \label{prop:extofvk}
Let~$u \in \mathcal{U}$ be arbitrary. Then there exists at least one minimizer to
\begin{align}
\label{def:linprob}
\min_{v \in \mathcal{U}} \left \lbrack \langle \nabla f(u),v \rangle+ g(v) \right \rbrack \tag{$\mathcal{P}_{\text{lin}}(u)$}
\end{align}
Moreover, for every~$u^0 \in \dom j$, the set
\begin{align} \label{def:setoflinsol}
S(u)=\left\{\,v \in \argmin \eqref{def:linprob}\;|\;u \in E_j(u^0)\,\right\}
\end{align}
is bounded.
\end{proposition}
\begin{proof}
For abbreviation set~$h(v)=\langle \nabla f(u),v \rangle+ g(v)$. Note that~$h$ is proper. In particular this implies~$\inf_{v \in \mathcal{U}} h(v)<+\infty$. Let~$v_k \in \mathcal{U}$,~$k\in\N$, denote a minimizing sequence for~$h$ i.e
\begin{align*}
h(v_k) \rightarrow \inf_{v \in \mathcal{U}} h(v).
\end{align*}
Now assume that~$\unorm{v_k}$ is unbounded. Then we have~$\unorm{v_k}>0$ for all~$k$ large enough as well as
\begin{align*}
\inf_{v \in \mathcal{U}} h(v)= \lim_{k\rightarrow \infty} h(v_k) \geq \limsup_{k\rightarrow \infty} \left\lbrack (g(v_k)/\unorm{v_k}-\cnorm{\nabla f(u)})\unorm{v_k} \right \rbrack =+\infty
\end{align*}
due to the coercivity of~$g$. This contradicts the fact that~$h$ is proper. Hence~$\unorm{v_k}$ is bounded and, due to the sequential Banach-Alaoglu theorem, there exists a subsequence, denoted by the same subscript, as well as~$\bar{v} \in \mathcal{U}$ with~$v_k \rightharpoonup^* \bar{v}$. Finally due to weak* lower semicontinuity of~$g$ we conclude
\begin{align*}
\inf_{v \in \mathcal{U}} h(v)=\liminf_{k \rightarrow \infty} h(v_k) \geq h(\bar{v})
\end{align*}
which shows that~$\bar{v}$ is a minimizer of~\eqref{def:linprob}.

Now let~$u^0 \in \text{dom}(j)$ be arbitrary but fixed. Recall that~$E_j(u^0)$ is weak* compact, see Proposition \ref{prop:existence}, and that ~$\nabla f$ is weak*-to-strong continuous. Hence there is~$M>0$ with~$\cnorm{\nabla f(u) } \leq M$ for all~$u \in E_j(u^0)$. Moreover note that
\begin{align*}
\min_{v \in \mathcal{U}} \left \lbrack \langle \nabla f(u),v \rangle+ g(v) \right \rbrack \leq g(0) <+\infty
\end{align*}
i.e. the minimum of~\eqref{def:linprob} is bounded independently of~$u \in E_j(u^0) $. Now assume that there are sequences~$v_k \in S(u)$,~$u_k \in \mathcal{U}$,~$k\in\N$, with~$\unorm{v_k} \rightarrow +\infty$ and
\begin{align*}
\langle \nabla f(u_k),v_k \rangle+ g(v_k)=\min_{v \in \mathcal{U}}\left \lbrack \langle \nabla f(u_k),v \rangle+ g(v) \right\rbrack.
\end{align*}
Then, similarly as before, we observe that
\begin{align*}
+\infty > g(0)\geq \limsup_{k \rightarrow \infty} \left\lbrack \langle \nabla f(u_k),v_k \rangle+ g(v_k) \right \rbrack \geq \limsup_{k \rightarrow \infty}
\left\lbrack (g(v_k)/\unorm{v_k}-M)\unorm{v_k} \right\rbrack=+\infty
\end{align*}
which gives a contradiction. Since the sequence~$v_k$ was chosen arbitrarily, $S(u)$ is bounded. This finishes the proof.
\end{proof}
In many challenging applications the descent direction~$v^k$ can be computed at a neglectable computational effort or can be even given analytically. This is discussed for two examples in the context of PDE constrained optimization in detail in Section~\ref{sec:applications}.

Second, we shed light on the choice of the stepsize~$s^k \in (0,1)$ in~\eqref{def:gcg}.
Of course, the properties of the resulting algorithm as well as its computational complexity depend crucially on its particular selection. The focus of this manuscript lies on a nonsmooth version of the well-known Armijo condition which is solely based on repeated function evaluations of~$j$.

\begin{mydef} \label{def:Quasiarmijogoldstein}
Let~$\gamma \in (0,1)$, $\alpha \in (0,1/2]$ be fixed.
The step size $s^k$ is chosen according to the \textit{Quasi-Armijo-Goldstein condition}, i.e. ~$s^k=\gamma^{n_k}$ where~$n_k \in \N$ is the smallest integer with
 \begin{align}\label{Def:Armijo}
\alpha \gamma^{n_k} \Psi(u^k)\leq j(u^k)-j(u^k+\gamma^{n_k}(v^k-u^k)).
\end{align}
\end{mydef}
The following lemma illustrates that this choice of the step size is always possible if~$u^k$ is not a stationary point. Moreover it establishes that the Quasi-Armijo-Goldstein stepsize guarantees descent of the objective functional value in every iteration, unless $\Psi(u_k)=0$.
\begin{lemma}\label{lem:possibilityofarmijo}
Let an arbitrary measure $u \in \dom j$ be given. Assume that $\Psi(u)>0$ and denote by $\bar{v}$ the solution of the associated partially linearized problem \eqref{def:linprob}. Define $u_s=u+s(\bar{v}-u)$ as well as
\begin{align*}
W:~(0,1] \rightarrow \mathbb{R} \quad W(s)=\frac{j(u)-j(u_s)}{s \Psi(u)}.
\end{align*}

The function $W$ is upper left semicontinuous,~$W \in \mathcal{C}((0,1);\mathbb{R})$, and
$\liminf_{s\rightarrow 0} W(s)=1$. In particular, if~$u^k$ is generated by Algorithm~\ref{alg:gcg} and~$\Psi(u^k)>0$, then there is a smallest~$n_k \in \mathbb{N}$ such that \eqref{Def:Armijo} is satisfied and, consequently,
\begin{align*}
j(u^{k+1})-j(u^k) < 0.
\end{align*}
for~$u^{k+1}=u^k+\gamma^{n_k}(v^k-u^k)$.
\end{lemma}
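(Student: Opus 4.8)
The plan is to analyse $W$ through the splitting
\[
j(u)-j(u_s)=\bigl[f(u)-f(u_s)\bigr]+\bigl[g(u)-g(u_s)\bigr],
\]
treating the smooth part $f$ and the nonsmooth part $g$ separately, and to exploit that the minimizer $\bar v$ of \eqref{def:linprob} realizes the supremum defining the gap, i.e.
\[
\Psi(u)=\langle \nabla f(u),u-\bar v\rangle+g(u)-g(\bar v).
\]
This identity follows at once from $\Psi(u)=\langle\nabla f(u),u\rangle+g(u)-\min_{v}[\langle\nabla f(u),v\rangle+g(v)]$ together with the optimality of $\bar v$, and it is the anchor for everything that follows, since it lets me compare the per-step decrease directly with $s\,\Psi(u)$. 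I also record that the minimal value of \eqref{def:linprob} is finite, whence $g(\bar v)<+\infty$; thus the map $\phi(s):=g(u_s)=g((1-s)u+s\bar v)$ is finite on all of $[0,1]$ and the statements below are not vacuous.

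For the regularity assertions I would first note that $\phi$ is convex on $[0,1]$, being the composition of the convex $g$ with an affine map; convexity alone yields continuity of $\phi$, hence of $W$, on the open interval $(0,1)$. The weak* lower semicontinuity of $g$ in $\mathbf{A2}$, together with the norm-continuity of $s\mapsto u_s$, shows that $\phi$ is lower semicontinuous on $[0,1]$. Since $s\mapsto f(u_s)$ is continuous by the weak* continuity in $\mathbf{A1}$ and the denominator $s\,\Psi(u)$ is continuous and strictly positive on $(0,1]$, the numerator $j(u)-f(u_s)-\phi(s)$ is upper semicontinuous, and therefore $W$ is upper (in particular upper left) semicontinuous.

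The core is the limit as $s\to0$. For the smooth part I would write $f(u)-f(u_s)=\int_0^s \langle\nabla f(u_\tau),u-\bar v\rangle\,\de\tau$ with $u_\tau=(1-\tau)u+\tau\bar v$, and use $u_\tau\to u$ in $\unorm{\cdot}$ together with the weak*-to-strong continuity of $\nabla f$ from $\mathbf{A1}$ to pass to the limit in the averaged integrand, giving $\tfrac1s[f(u)-f(u_s)]\to\langle\nabla f(u),u-\bar v\rangle$. For the nonsmooth part, convexity gives $g(u_s)\le(1-s)g(u)+s\,g(\bar v)$, hence $\tfrac1s[g(u)-g(u_s)]\ge g(u)-g(\bar v)$ for every $s\in(0,1]$. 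Adding the two and dividing by $\Psi(u)>0$ yields
\[
\liminf_{s\to0}W(s)\ \ge\ \frac{\langle\nabla f(u),u-\bar v\rangle+g(u)-g(\bar v)}{\Psi(u)}=1 .
\]

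The stated consequence then follows immediately: since $\alpha\le 1/2<1\le\liminf_{s\to0}W(s)$, the inequality $W(\gamma^{n})\ge\alpha$ — which is precisely \eqref{Def:Armijo} — holds for all sufficiently large $n$, so the set of admissible exponents is nonempty and, by well-ordering, possesses a smallest element $n_k$; for $u^{k+1}=u^k+\gamma^{n_k}(v^k-u^k)$ we then get $j(u^k)-j(u^{k+1})\ge\alpha\gamma^{n_k}\Psi(u^k)>0$, because $\alpha,\gamma^{n_k},\Psi(u^k)$ are all strictly positive. I expect the main obstacle to be the reverse bound needed to pin the limit to the exact value $1$: the estimate above uses only the secant bound for $g$, whereas a matching upper bound forces the one-sided directional derivative $g'(u;\bar v-u)$ to coincide with the secant slope $g(\bar v)-g(u)$. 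This coincidence holds when $g$ is affine along $[u,\bar v]$ (as for indicator constraints, recovering classical Frank--Wolfe) but is the delicate point in the genuinely nonsmooth regime. Since the line search relies only on $\liminf_{s\to0}W(s)\ge1>\alpha$, I would organize the argument around this lower bound and treat the equality as the subtle endpoint of the estimate.
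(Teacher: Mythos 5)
Your proof is correct and follows essentially the same route as the paper: the gap identity $\Psi(u)=\langle\nabla f(u),u-\bar v\rangle+g(u)-g(\bar v)$, the convexity bound $g(u)-g(u_s)\ge s\bigl(g(u)-g(\bar v)\bigr)$, and the weak*-to-strong continuity of $\nabla f$ to pass to the limit in the smooth part (you use the integral form where the paper uses the mean value theorem, which is interchangeable), with continuity and upper semicontinuity of $W$ likewise obtained from convexity and lower semicontinuity of $s\mapsto g(u_s)$. Your observation that only the lower bound $\liminf_{s\to 0}W(s)\ge 1$ is established and needed matches the paper, whose proof also stops at this inequality.
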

\begin{proof}
Due to the definition of $\bar{v}$ we have
\begin{align*}
W(s)=\frac{j(u)-j(u_s)}{s \Psi(u)}=\frac{j(u)-j(u_s)}{s \left( \pair{\nabla f(u), u - \bar{v}} + g(u) - g(\bar{v})\right)}.
\end{align*}
for~$s>0$.
Note that the mapping~$s \mapsto j(u_s)$ is convex and  lower semicontinuous on its domain~$[0,1]$. Hence it is continuous on~$(0,1)$,~\cite[Proposition 2.5]{ekeland}, and we have~$W \in \mathcal{C}((0,1);\mathbb{R})$. Moreover we also get the upper left semicontinuity of~$W$.
From the mean value theorem we get for every $s\in (0,1)$ the existence of $\zeta_s \in [0,1]$ and $\tilde{u}_s= u+\zeta_s(u_s-u)\in \dom j$ with
\begin{align*}
W(s)=\frac{ s\langle\nabla f(\tilde{u}_s), u - \bar{v} \rangle + g(u) - g(u_s)}{s \left( \langle \nabla f(u), u - \bar{v}\rangle + g(u) - g(\bar{v})\right)}
\end{align*}
Using the convexity of $g$, we estimate
\begin{align*}
W(s) =\frac{ s\langle\nabla f(\tilde{u}_s), u - \bar{v}\rangle + g(u) - g(u_s)}{s \left( \langle\nabla f(u), u - \bar{v}\rangle + g(u) - g(\bar{v})\right)} \geq
\frac{ s \left( \langle\nabla f(\tilde{u}_s), u - \bar{v}\rangle + g(u) - g(\bar{v})\right)}{s \left( \langle\nabla f(u), u - \bar{v}\rangle + g(u) - g(\bar{v})\right)}.
\end{align*}
Since $\zeta_s$ is bounded independently of $s$, there holds $\tilde{u}_s \rightharpoonup^* u$ for $s\rightarrow 0$. Due to the weak*-to-strong continuity of $\nabla f$, the right-hand side of the inequality tends to $1$ as $s \to 0$, yielding $\liminf_{s\rightarrow 0} W(s) \geq 1$. The statement on the existence of~$n_k$ follows immediately due to ~$W \in \mathcal{C}((0,1); \mathbb{R})$ and~$\liminf_{s\rightarrow 0} W(s)\geq 1$. Finally we get
\begin{align*}
j(u^{k+1})-j(u^k) \leq - \gamma^{n_k} \Psi(u^k) <0
\end{align*}
due to Definition~\ref{def:Quasiarmijogoldstein} and~$\Psi(u^k),~\gamma^{n_k}>0$.
\end{proof}
\begin{remark}
Let us point out that sublinear rates of convergence for the GCG method have already been proven for a large variety of other stepsize choices. We refer e.g. to "optimal" steps,~\cite{bredies,xu2017}, i.e.
\begin{align*}
s^k \in \argmin_{s \in[0,1]} j(u^k+s(v^k-u^k)),
\end{align*}
or adaptive "short steps",~\cite{beck}. Note that, in comparison to Definition~\ref{def:Quasiarmijogoldstein}, computing these stepsizes either requires the solution of a one-domensional minimization problem or explicit knowledge of additional paramaters e.g.~the Lipschitz constant~$L_{u^0}$ of~$\nabla f$. For the "classical" CG method,
i.e. for~$g=I_{\mathcal{U}_{ad}}$,
improved rates of convergence for those types of stepsizes are e.g. discussed in~\cite{Dunn79,kerdreux,guelat,levitin}. For the GCG method we are not aware of any results in this direction. However, they can be easily derived using, mutatis mutandis, the same techniques applied in the following sections. We thus omit a detailed discussion in this paper. Finally we also mention open loop stepsizes, e.g.~$s^k=2/(k+1)$, see e.g.~\cite{yu}. While these are simple to implement, a sublinear rate of convergence is essentially the best one obtainable for a GCG method with this kind of stepsize,~\cite{dunnopen,Dunn79}.
\end{remark}
Finally we note that both, choosing~$s^k$ according to Definition~\ref{Def:Armijo} as well as checking for convergence of the method, requires the computation of the gap functional~$\Psi(u^k)$. The following lemma illustrates that this is easily obtained as a byproduct of solving~\eqref{def:linprob}.
\begin{lemma} \label{lem:easydualgap}
Let~$u \in \dom g$ be arbitrary but fixed and denote by~$\bar{v}$ a solution to~\eqref{def:linprob}. Then there holds
\begin{align*}
\Psi(u)= \left \lbrack \langle \nabla f(u),u-\bar{v} \rangle+g(u)-g(\bar{v}) \right\rbrack
\end{align*}
\end{lemma}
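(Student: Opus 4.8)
The plan is to exploit the fact that the supremum defining $\Psi(u)$ in~\eqref{def:gap} and the minimization problem~\eqref{def:linprob} are two sides of the same coin: they differ only by terms that do not depend on the optimization variable~$v$. First I would rewrite the objective inside the supremum by separating the part depending on~$v$ from the part that is constant with respect to~$v$, namely
\begin{align*}
\langle \nabla f(u),u-v \rangle+g(u)-g(v)=\Big[\langle \nabla f(u),u \rangle+g(u)\Big]-\Big[\langle \nabla f(u),v \rangle+g(v)\Big].
\end{align*}
Since~$u \in \dom g$ guarantees~$g(u)<+\infty$, the bracketed constant is finite and may be pulled out of the supremum without difficulty.

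Next I would use the elementary identity~$\sup_{v}(-F(v))=-\inf_{v}F(v)$ with~$F(v)=\langle \nabla f(u),v \rangle+g(v)$ to turn the supremum into an infimum. This yields
\begin{align*}
\Psi(u)=\langle \nabla f(u),u \rangle+g(u)-\inf_{v \in \mathcal{U}}\Big[\langle \nabla f(u),v \rangle+g(v)\Big].
\end{align*}
At this point the crucial input is Proposition~\ref{prop:extofvk}, which asserts that~\eqref{def:linprob} admits a minimizer~$\bar{v}$; hence the infimum is attained and equals~$\langle \nabla f(u),\bar{v} \rangle+g(\bar{v})$. Substituting this value and regrouping the terms gives exactly the claimed expression
\begin{align*}
\Psi(u)=\langle \nabla f(u),u-\bar{v} \rangle+g(u)-g(\bar{v}).
\end{align*}

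There is no serious obstacle here: the argument is a direct consequence of rearranging the objective and invoking the already-established existence of a minimizer for the partially linearized problem. The only point requiring a moment of care is ensuring that all quantities are finite so that the algebraic manipulations are legitimate, which is precisely why the hypothesis~$u \in \dom g$ is imposed and why Proposition~\ref{prop:extofvk} is needed to replace the infimum by an attained minimum. I expect the entire proof to be a three-line computation.
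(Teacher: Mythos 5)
Your argument is correct and is exactly the computation the paper leaves implicit in its one-line proof (``follows immediately from the definitions''): separating off the $v$-independent terms, converting the supremum to an infimum, and using the attainment of the minimum from Proposition~\ref{prop:extofvk}. No discrepancy with the paper's approach.
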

\begin{proof}
This result follows immediately from the definitions of~$\Psi(u)$, see~\eqref{def:gap}, and~\eqref{def:linprob}.
\end{proof}
For further reference the method is again summarized in Algorithm~\ref{alg:gcg}.
\begin{algorithm}
\begin{flushleft}
\hspace*{\algorithmicindent} \textbf{Input:} Starting point~$u^0 \in \mathcal{U}$,~$\gamma \in (0,1)$,~$\alpha\in (0, 1/2\rbrack$. \\
\hspace*{\algorithmicindent} \textbf{Output:} Minimizer~$\optu$ to~\eqref{def:problem}.
\end{flushleft}
\begin{algorithmic}
\STATE
\WHILE{$\Psi(u^k)>0$}
\STATE
\STATE 1. Compute a solution~$v^k \in \mathcal{U}_{ad}$ to
\begin{align*}
\min_{v \in \mathcal{U}} \left \lbrack \langle \nabla f(u^k), v \rangle+g(v) \right \rbrack.
\end{align*}
\STATE

\STATE 2. Choose~$s^k \in [0,1]$ according to Definition~\ref{def:Quasiarmijogoldstein}, update~$u^{k+1}=u^k+ s^k(v^k-u^k)$.
\STATE

\STATE 3. Set~$k=k+1$ and go to the next iteration.

\ENDWHILE
\end{algorithmic}
\caption{Generalized Frank-Wolfe method for~\eqref{def:problem}} \label{alg:gcg}
\end{algorithm}

\section{Convergence Analysis} \label{sec:convergence}
This section addresses the convergence properties of Algorithm~\ref{alg:gcg}. In particular, our interest lies in deriving convergence rates for the \textit{residuals}
\begin{align*}
r_j(u^k) \coloneqq j(u^k)- \min_{u \in \mathcal{U}} j(u)
\end{align*}
associated to the sequence of iterates~$u^k$. For this purpose we proceed in two steps. First, in Section~\ref{subsec:sublinconv}, we show that the sequence~$u^k$ converges (on subsequences) towards minimizers of~\eqref{def:problem}. Moreover there holds~$r_j(u^k)= \mathcal{O}(1/(k+1))$.
While it is known that this sublinear rate of convergence is tight for general problems,~\cite{cannon}, we will improve on it, in a second step, if there is a minimizer~$\optu$ which satisfies
\begin{align} \label{eq:strenghtfirstorder}
 \langle  \nabla f(\optu),u- \optu \rangle+g(u)-g(\optu) \geq \theta \, \norm{u-\optu}^q_* \quad \forall u \in \mathcal{U}
\end{align}
for some~$\theta>0$ and an exponent~$q \geq 2$. Indeed, if~$q=2$, Algorithm~\ref{alg:gcg} converges linearly. Otherwise, for~$q>2$, improved sublinear rates are obtained.
Recalling the variational inequality~\eqref{eq:subdiff},~\eqref{eq:strenghtfirstorder} can be interpreted as a~\textit{strengthened first order condition}. In the case~$q=2$, it is intimidely related to the~\textit{strong metric subregularity} of the subdifferential~$\partial g$ at~$\optu$ with respect to~$-\nabla f(\optu)$,~\cite{metricsub}. Due to the convexity of~$f$,~\eqref{eq:strenghtfirstorder} implies a growth condition of the form
\begin{align*}
r_j(u)=j(u)-j(\optu) \geq \theta \, \stnorm{u-\optu}^q \quad \forall u \in \mathcal{U}
\end{align*}
for~$j$. This also allows to derive convergence rates for the sequence of iterates $u_k$. We stress that both, the improved rates for~$r_j(u^k)$ as well as the results for~$u^k$, solely rely on~\eqref{eq:strenghtfirstorder} but~\textit{donot} require any uniform convexity assumptions of~$f$.

Of course,~\eqref{eq:strenghtfirstorder} represents an additional, non-trivial, requirement which has to be discussed for each instance of Problem~\eqref{def:problem} individually. Several examples from the literature on PDE-constrained optimal control are discussed in Section~\ref{sec:applications}. Moreover, for the "classical" CG method, inequalities of the form~\eqref{eq:strenghtfirstorder} are implied by the local uniform convexity of the constraint set~$\mathcal{U}_{ad}$ at~$\optu$. For a reference we refer to~\cite{Dunn79,kerdreux}.

\subsection{Auxiliary Results}
\label{subsec:auxiliary}
We start by collecting some useful auxiliary results. In the analysis of first order methods, a vital role is played by the classical descent lemma. For a reference we point  to~\cite{bertsekas}. In the following, we require a nonsmooth version of this standard result.
\begin{lemma} \label{lem:descentlem}
Let Assumptions~\ref{ass:functions} and~\ref{ass:lipschitz} hold.
Assume that~$u^k$ and~$v^k$ are generated by Algorithm~\ref{alg:gcg}. For~$s \in [0,1]$ set~$u^k_s= u^k+s (v^k-u^k)$. If~$u^k_s \in E_j(u^0)$, then we have
\begin{align*}
r_j(u^k_s)-r_j(u^k) \leq  -s\Psi(u^k)+\frac{L_{u_0}}{2} \left(s\stnorm{u^k-v^k}\right)^2.
\end{align*}
\end{lemma}
\begin{proof}
Due to the convexity of the sublevel set~$E_j(u^0)$ we obtain
\begin{align*}
j(u^{k}_s)-j(u^{k})= &-s \langle \nabla f(u^k), u^k -v^k \rangle + g(u^{k}_s)-g(u^k)+ \int_0 ^{s} \! \langle \nabla f(u_{\sigma})-\nabla f(u^k),v^k-u^k\rangle~ \mathrm{d} \sigma ,
\end{align*}
with $u_{\sigma}=u^k+\sigma (v^k-u^k)\in E_{j}(u^0)$ for $\sigma\in[0,s]$.
Using the convexity of $g$ we obtain
\begin{align*}
-s \langle \nabla f(u^k), u^k -v^k \rangle + g(u^{k}_s)-g(u^k)
 \leq
-s \left(\langle \nabla f(u^k), u^k -v^k \rangle +g(u^k)-g(v^k)\right),
\end{align*}
where the right-hand side simplifies to $-s \Psi(u^k)$ due to the definition of~$v^k$, see Lemma~\ref{lem:easydualgap}.
Finally, invoking Assumption~\ref{ass:lipschitz}, we get
\begin{align*}
\int_0 ^{s} \! \langle \nabla f(u_{\sigma})-\nabla f(u^k),v^k-u^k\rangle ~\mathrm{d} \sigma &   \leq   \frac{L_{u^0}}{2} \left(s \stnorm{v^k -u^k}\right)^2.
\end{align*}
Combining both estimates and noting that~$r_j(u^k) \leq \Psi(u^k)$ yields the proof.
\end{proof}
We particularly point out that the per-iteration descent of Algorithm~\ref{alg:gcg} is governed by two terms playing off against each other: The negative quantity~$- \Psi(u^k)$ and the positive distance between~$v^k$ and~$u^k$. Hence, intuitively, the convergence rate of Algorithm~\ref{alg:gcg} is dominated by the asymptotic behaviour of both terms in relation to each other. This is formalized in the following proposition.
\begin{proposition} \label{prop:recursion}
Let~$u^k$ be generated by Algorithm~\ref{alg:gcg} and assume that~$\Psi(u^k)>0$ for all~$k\in\N$. Moreover let~$q_k$,~$k\in \N$, denote a sequence with
\begin{align}\label{defqk}
\alpha \, \min \left\{\frac{2(1-\alpha)\gamma\Psi(u^k)}{L_{u^0}\, \stnorm{v_k-u_k}^2}, 1\right\} \geq q_k >0.
\end{align}
Then there holds
\begin{align} \label{eq:descentqk}
r_j(u^{k+1}) \leq \left(1- q_k\right)\,  r_j(u^{k})
\end{align}
for all~$k \in \N$.
%\begin{align} \label{defqk}
%q_k \coloneqq r_j(u^0)\,\alpha\gamma \, \min \left\{\frac{2(1-\alpha)}{L_{u^0}\, \stnorm{v_k-u_k}^2}, \frac{1}{r_j(u^k)}\right\}.
%\end{align}
\end{proposition}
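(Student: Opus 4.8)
The plan is to combine the guaranteed per-iteration descent of the Quasi-Armijo-Goldstein rule with an explicit lower bound on the accepted stepsize $s^k=\gamma^{n_k}$, and then to convert the resulting decrease of the gap functional into a decrease of the residual by means of the estimate $\Psi(u^k)\geq r_j(u^k)$ from Theorem~\ref{thm:optimality}. Throughout I would use that $u^k\in E_j(u^0)$, which holds inductively since Lemma~\ref{lem:possibilityofarmijo} produces a monotonically decreasing sequence $j(u^k)$.

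First I would record the immediate consequence of Definition~\ref{def:Quasiarmijogoldstein}, namely
\[
r_j(u^k)-r_j(u^{k+1})=j(u^k)-j(u^{k+1})\geq \alpha\,s^k\,\Psi(u^k),
\]
so that $r_j(u^{k+1})\leq r_j(u^k)-\alpha s^k\Psi(u^k)$. Since $\alpha s^k\geq 0$ and, by Theorem~\ref{thm:optimality}, $\Psi(u^k)\geq r_j(u^k)\geq 0$, this already gives $r_j(u^{k+1})\leq (1-\alpha s^k)\,r_j(u^k)$. It therefore suffices to prove the stepsize bound
\[
s^k\geq \min\left\{\frac{2(1-\alpha)\gamma\,\Psi(u^k)}{L_{u^0}\,\stnorm{v^k-u^k}^2},\,1\right\},
\]
for then $\alpha s^k\geq q_k$ and the assertion $r_j(u^{k+1})\leq(1-q_k)r_j(u^k)$ follows from $r_j(u^k)\geq 0$.

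The core of the proof is this stepsize bound. Setting $\hat s=2(1-\alpha)\Psi(u^k)/(L_{u^0}\stnorm{v^k-u^k}^2)$, I would first observe that whenever a trial iterate $u^k_s$ lies in $E_j(u^0)$ and $s\leq\min\{\hat s,1\}$, Lemma~\ref{lem:descentlem} yields
\[
r_j(u^k_s)-r_j(u^k)\leq -s\Psi(u^k)+\tfrac{L_{u^0}}{2}s^2\stnorm{v^k-u^k}^2\leq -\alpha s\Psi(u^k),
\]
which is precisely the Quasi-Armijo-Goldstein inequality~\eqref{Def:Armijo} at $s$. As $n_k$ is the \emph{smallest} exponent for which~\eqref{Def:Armijo} holds, either $n_k=0$ and $s^k=1$, or $n_k\geq1$ and the larger trial stepsize $\gamma^{n_k-1}$ violates~\eqref{Def:Armijo}; the observation above then forces $\gamma^{n_k-1}>\min\{\hat s,1\}$, whence $s^k=\gamma^{n_k}>\gamma\min\{\hat s,1\}$. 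A short case distinction according to whether $\hat s\geq 1$ or $\hat s<1$ upgrades this to $s^k\geq\min\{\gamma\hat s,1\}$, which is the claimed bound.

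The step I expect to demand the most care is verifying the hypothesis ``$u^k_s\in E_j(u^0)$'' before Lemma~\ref{lem:descentlem} may be invoked at the trial stepsizes, since the conditional descent estimate is only available inside the sublevel set while a rejected trial point could a priori leave it. I would close this gap by noting that $s\mapsto j(u^k_s)$ is convex and, being finite on $[0,1]$ (by convexity of $g$ together with $u^k,v^k\in\dom g$), also continuous there; then, starting from $u^k=u^k_0\in E_j(u^0)$ and using that the Armijo decrease keeps $j(u^k_s)<j(u^k)\leq j(u^0)$ strictly below the level $j(u^0)$, a connectedness argument shows that $u^k_s$ cannot exit $E_j(u^0)$ for $s\in[0,\min\{\hat s,1\}]$. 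With this the trial stepsizes up to $\min\{\hat s,1\}$ genuinely satisfy~\eqref{Def:Armijo}, and the stepsize bound—hence the proposition—follows.
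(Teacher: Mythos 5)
Your argument is correct, and its first half --- passing from the Armijo inequality $j(u^k)-j(u^{k+1})\geq\alpha s^k\Psi(u^k)$ to $r_j(u^{k+1})\leq(1-\alpha s^k)r_j(u^k)$ via the bound $\Psi(u^k)\geq r_j(u^k)$ of Theorem~\ref{thm:optimality} --- is identical to the paper's. Where you diverge is in the stepsize bound $s^k\geq\min\{\gamma\hat s,1\}$ with $\hat s=2(1-\alpha)\Psi(u^k)/(L_{u^0}\stnorm{v^k-u^k}^2)$. You argue by contradiction at the rejected trial stepsize $\gamma^{n_k-1}$, which obliges you to first prove that every trial point $u^k_s$ with $s\leq\min\{\hat s,1\}$ stays in $E_j(u^0)$ so that Lemma~\ref{lem:descentlem} is applicable there; you correctly identify this as the delicate point and propose a clopen/connectedness argument. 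That argument does close, but note the seed step is not quite the ``Armijo decrease'' you invoke (which is only available once membership is known): you need a separate reason why $j(u^k_s)\leq j(u^0)$ on some initial interval $[0,\varepsilon]$, which is supplied by $\liminf_{s\to 0}W(s)=1$ from Lemma~\ref{lem:possibilityofarmijo}; with that seed, closedness via weak* lower semicontinuity and openness at interior points via the strict decrease $j(u^k_s)\leq j(u^k)-\alpha s\Psi(u^k)<j(u^0)$ finish it. The paper sidesteps this entire membership issue with a different device: since $W(s^k)\geq\alpha$ while $W(\gamma^{n_k-1})<\alpha$, the intermediate value theorem produces $\hat s^k\in[s^k,s^k/\gamma)$ with $W(\hat s^k)=\alpha$ exactly, and at such a point $j(u^k_{\hat s^k})=j(u^k)-\alpha\hat s^k\Psi(u^k)<j(u^0)$ holds automatically, so Lemma~\ref{lem:descentlem} applies there without any propagation argument and yields $\hat s^k\geq\hat s$, hence $s^k\geq\gamma\hat s$. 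Your route is the more standard backtracking analysis and is self-contained once the sublevel-set propagation is written out; the paper's IVT trick buys a shorter proof at the cost of invoking an intermediate value theorem for merely upper left semicontinuous functions. Both yield the same constant, so the proposition follows either way.
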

\begin{proof}
By definition of the step size $s^k$, see Definition~\ref{def:Quasiarmijogoldstein}, as well as~\eqref{eq:boundforres} there holds
\begin{align*}
\alpha s^k r_j(u^k)\leq \alpha s^k \Psi(u^k)\leq r_j(u^k)-r_j(u^{k+1}),
\end{align*}
which yields
\begin{align} \label{descentinresidual}
r_j(u^{k+1})\leq(1-\alpha s^k)\, r_j(u^k).
\end{align}
Thus, to arrive at~\eqref{defqk} we only require a suitable estimate on~$\alpha s^k$.
Since $\Psi(u^k)>0$ we obtain $0 < s^k \leq  1$ for all $k$. Two cases have to be distinguished. If $s^k=1$ we immediately arrive at
\begin{align} \label{eq:recurseest1}
r_j(u^{k+1})\leq(1-\alpha ) \,r_j(u^k).
\end{align}
In the second case, if $s^k< 1$, there exists $\hat{s}^k \in [s^k, s^k / \gamma)$ with
\begin{align*}
\alpha=\frac{j(u^k)-j(u^k +\hat{s}^k(v^k-u^k))}{\hat{s}^k \Psi(u^k)},
\end{align*}
using that the function W from Lemma~\ref{lem:possibilityofarmijo} is in~$\mathcal{C}(0,1)$ and applying the intermediate value theorem,~\cite{guillerme}. Consequently, $u^k +s(v^k-u^k)\in E_j (u^0)$ for all $0\leq s\leq \hat{s}^k$ due to the convexity of $j$.
Now, because of the Lipschitz-continuity of $\nabla f$ on $E_j (u^0)$, Lemma \ref{lem:descentlem} can be applied and, defining $\delta u^k =v^k-u^k$, there holds
\begin{align*}
\alpha = \frac{j(u^k)-j(u^{k}+\hat{s}^k\delta u^k)}{\hat{s}^k\Psi(u^k)}
\geq 1- \frac{L_{u^0} \hat{s}^k}{2}  \frac{\stnorm{\delta u^k}^2}{\Psi(u^k)}
\geq 1- \frac{L_{u^0} s^k}{2\gamma}  \frac{\stnorm{\delta u^k}^2}{\Psi(u^k)}.
\end{align*}
In the last inequality we have used that $\hat{s}^k \leq s^k/\gamma$. Moreover we point out that~$\delta u^k \neq 0$ since~$\Psi(u^k)>0$.
Reordering yields
\begin{align*}
1\geq s^k \geq 2 \gamma (1-\alpha) \frac{\Psi(u^k)}{L_{u^0} \stnorm{v^k -u^k}^2}.
\end{align*}
and thus by \eqref{descentinresidual}
\begin{align} \label{eq:recurseest2}
r_j(u^{k+1})\leq \left(1-\alpha \gamma \,\frac{ 2  (1-\alpha)\Psi(u^k)}{L_{u^0} \stnorm{v^k -u^k}^2} \right) \,r_j(u^k).
\end{align}
Combining both estimates,~\eqref{eq:recurseest1} and~\eqref{eq:recurseest2}, the inequality
%\begin{align}\label{recursion}
%0\leq \frac{r_j(u^{k+1})}{r_j(u^0)}\leq \frac{r_j(u^{k+1/2})}{r_j(u^0)}\leq \frac{r_j(u^{k})}{r_j(u^0)}-q_k \left ( \frac{r_j(u^{k})}{r_j(u^0)}\right )^2\quad \forall k\in \mathbb{N}
%\end{align}
\begin{align}\label{recursion}
0\leq r_j(u^{k+1})\leq r_j(u^{k+1/2})\leq (1-q_k)\, r_j(u^k)\quad \forall k\in \mathbb{N}
\end{align}
holds where $q_k$ satisfies~\eqref{defqk}.
%\begin{align*}
%q_k=r_j(u^0)\, \alpha \gamma \, \min \left \{ \frac{2  (1-\alpha)}{L_{u^0} \stnorm{v^k -u^k}^2} ,\frac{1}{r_j(u^k)}\right \}.
%\end{align*}
\end{proof}

\subsection{Sublinear convergence rates} \label{subsec:sublinconv}
Invoking the last proposition, it becomes clear that worst case convergence rates for Algorithm~\ref{alg:gcg} can be derived by choosing a suitable sequence~$q_k>0$ according to~\eqref{defqk} and studying its asymptotic behaviour. For example, without further assumptions on problem~\eqref{def:problem}, the following choice is possible.
\begin{lemma} \label{lem:boundofvkanduk}
Let~$u^k$ and~$v^k$,~$k\in\N$, be generated by Algorithm~\ref{alg:gcg}. Then there is~$M_*>0$ with~$\max \{\norm{v^k}_*, \norm{u^k}_*\}\leq M_*$ for some~$M_*>0$. In particular~$q_k$ in~\eqref{defqk} can be chosen as
\begin{align}
\label{defqkslow}
q_k \coloneqq \alpha\, \min \left\{\frac{(1-\alpha)\gamma r_j(u^k)}{2L_{u^0}\,M^2_* }, 1\right\}  >0.
\end{align}
\end{lemma}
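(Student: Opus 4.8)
The plan is to establish the two assertions in turn: first the uniform $\stnorm{\cdot}$-boundedness of the iterates $u^k$ and of the search directions $v^k$, and then the verification that the explicit $q_k$ in~\eqref{defqkslow} is an admissible choice in the sense of~\eqref{defqk}. Both parts are consequences of results already available in the excerpt, so the work is mainly one of assembling them in the right order.

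For the boundedness, I would first exploit that Algorithm~\ref{alg:gcg} is a genuine descent method: as long as $\Psi(u^k)>0$, Lemma~\ref{lem:possibilityofarmijo} gives $j(u^{k+1})<j(u^k)$, hence $j(u^k)\le j(u^0)$ and therefore $u^k\in E_j(u^0)$ for every $k\in\N$. Since $j$ is radially unbounded by Assumption~\ref{ass:functions}~$\mathbf{A3}$, the sublevel set $E_j(u^0)$ is bounded with respect to $\unorm{\cdot}$ — this is exactly the estimate used in the proof of Proposition~\ref{prop:existence} — so the $u^k$ are uniformly $\unorm{\cdot}$-bounded. The directions $v^k$ are, by construction, minimizers of the partially linearized problem~\eqref{def:linprob} at the point $u^k\in E_j(u^0)$; they thus belong to the solution set $S(u^0)$ which Proposition~\ref{prop:extofvk} shows to be $\unorm{\cdot}$-bounded. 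Invoking Assumption~\ref{ass:lipschitz}~\textbf{A4}, uniform $\unorm{\cdot}$-boundedness transfers to uniform $\stnorm{\cdot}$-boundedness, and taking the larger of the two resulting constants produces the desired $M_*>0$ with $\max\{\norm{v^k}_*,\norm{u^k}_*\}\le M_*$.

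For the second assertion I would chain three elementary estimates. The triangle inequality gives $\stnorm{v^k-u^k}\le\stnorm{v^k}+\stnorm{u^k}\le 2M_*$, so $\stnorm{v^k-u^k}^2\le 4M_*^2$, while Theorem~\ref{thm:optimality} supplies $r_j(u^k)\le\Psi(u^k)$. Combining these yields
\[
\frac{2(1-\alpha)\gamma\,\Psi(u^k)}{L_{u^0}\,\stnorm{v^k-u^k}^2}\;\ge\;\frac{2(1-\alpha)\gamma\,\Psi(u^k)}{4L_{u^0}M_*^2}\;\ge\;\frac{(1-\alpha)\gamma\,r_j(u^k)}{2L_{u^0}M_*^2}.
\]
Since $t\mapsto\min\{t,1\}$ is nondecreasing and $\alpha>0$, taking the minimum with $1$ and multiplying by $\alpha$ shows that the $q_k$ of~\eqref{defqkslow} is bounded above by the left-hand side of~\eqref{defqk}, so it is an admissible choice. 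Strict positivity $q_k>0$ follows because the standing hypothesis $\Psi(u^k)>0$ (inherited from Proposition~\ref{prop:recursion}) means $u^k$ is not a minimizer, whence $r_j(u^k)>0$ by the characterization of minimizers in Theorem~\ref{thm:optimality}.

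I do not expect a genuine obstacle here, since every ingredient is already in place; the only points requiring care are purely bookkeeping ones: correctly identifying that the $u^k$ lie in $E_j(u^0)$ via the descent property, that the $v^k$ lie in the bounded solution set of Proposition~\ref{prop:extofvk}, and that the inequalities $\stnorm{v^k-u^k}^2\le 4M_*^2$ and $r_j(u^k)\le\Psi(u^k)$ are applied in the direction that weakens~\eqref{defqk} into~\eqref{defqkslow}. The subtlest single step is justifying $r_j(u^k)>0$, which I handle through the equivalence between zeros of the gap functional and minimizers.
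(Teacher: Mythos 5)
Your proof is correct and follows essentially the same route as the paper's: descent via Lemma~\ref{lem:possibilityofarmijo} places $u^k$ in $E_j(u^0)$, Propositions~\ref{prop:existence} and~\ref{prop:extofvk} give $\unorm{\cdot}$-boundedness of $u^k$ and $v^k$, Assumption~\textbf{A4} transfers this to $\stnorm{\cdot}$, and the bound $\stnorm{v^k-u^k}^2\le 4M_*^2$ together with $\Psi(u^k)\ge r_j(u^k)$ from~\eqref{eq:boundforres} yields the admissibility of~\eqref{defqkslow}. You merely spell out the triangle-inequality and positivity bookkeeping that the paper leaves implicit.
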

\begin{proof}
From Lemma~\ref{lem:possibilityofarmijo} we readily get~$u^k \in E_j(u^0)$,~$k\in\N$. Now, due to Proposition~\ref{prop:existence} and~\ref{prop:extofvk}, $u^k$ and~$v^k$ are bounded w.r.t.~$\unorm{\cdot}$ independently of~$k\in\N$. Together with Assumption~\ref{ass:lipschitz}~$\mathbf{A4}$ this gives the existence of~$M_*>0$ with~$\max \{\norm{v^k}_*, \norm{u^k}_*\}\leq M_*$ as claimed. Finally we invoke~\eqref{eq:boundforres} to get
\begin{align*}
\alpha \, \min \left\{\frac{2(1-\alpha)\gamma \Psi(u^k)}{L_{u^0}\, \stnorm{v_k-u_k}^2}, 1\right\} \geq \alpha \, \min \left\{\frac{(1-\alpha)\gamma r_j(u^k)}{2L_{u^0}\, M^2_*}, 1\right\}.
\end{align*}
\end{proof}

Together with the following recursion lemma this choice of~$q_k$ guarantees a sublinear~$\mathcal{O}(1/(k+1))$ rate of convergence for Algorithm~\ref{alg:gcg}.
\begin{lemma}
\label{lem:recursion1}
Let~$h_k > 0$,~$k \in \N$, denote a sequence of numbers which satisfy~$h_0=1$ as well as
\begin{align*}
h_{k+1} \leq  h_k- q h^2_k
\end{align*}
for all~$k \in \N$ and some~$q>0$.
Then we also have
\begin{align*}
0 <h_k \leq \frac{1}{1+ qk}
\end{align*}
for all~$k \in \N$.
\end{lemma}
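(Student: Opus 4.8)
The plan is to pass to reciprocals and telescope. The key observation is that the recursion, combined with the strict positivity $h_{k+1}>0$, automatically keeps the iterates inside the region where the reciprocal manipulation is valid, so no additional hypothesis on the size of $q$ or $h_k$ is needed.

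First I would record two elementary consequences of the assumptions. Since $h_k>0$ for every $k\in\N$ and $h_{k+1}\leq h_k-q h_k^2$, the chain $0<h_{k+1}\leq h_k-q h_k^2=h_k(1-q h_k)$ forces $1-q h_k>0$, i.e. $q h_k<1$, for all $k$. In particular the factor $1-q h_k$ is strictly positive, so dividing the recursion by the positive quantity $h_k h_{k+1}$ is legitimate.

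Next I would derive an additive recursion for the reciprocals. From $h_{k+1}\leq h_k(1-q h_k)$ together with $1-q h_k>0$ one obtains
\begin{align*}
\frac{1}{h_{k+1}}\geq \frac{1}{h_k(1-q h_k)}=\frac{1}{h_k}\cdot\frac{1}{1-q h_k}\geq \frac{1}{h_k}(1+q h_k)=\frac{1}{h_k}+q,
\end{align*}
where the middle inequality uses the elementary bound $1/(1-x)\geq 1+x$, valid for $x\in[0,1)$, applied with $x=q h_k$. Telescoping this estimate from $0$ to $k$ and inserting $h_0=1$ yields
\begin{align*}
\frac{1}{h_k}\geq \frac{1}{h_0}+q k=1+q k,
\end{align*}
and inverting gives $0<h_k\leq 1/(1+q k)$, which is the claim.

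Since every step reduces to an elementary scalar inequality, there is no genuine obstacle here; the only point requiring a moment's care is verifying $q h_k<1$, which—rather than being imposed as an extra assumption—is extracted for free from the positivity of $h_{k+1}$ in the first step. An alternative, slightly more pedestrian route would be a direct induction establishing $h_k\leq 1/(1+q k)$ by exploiting the monotonicity of $x\mapsto x-q x^2$ on $[0,1/(2q)]$, but this forces one to track the admissible range of $h_k$ explicitly, so I would prefer the reciprocal argument above.
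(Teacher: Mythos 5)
Your argument is correct: the positivity of $h_{k+1}$ indeed forces $qh_k<1$, the reciprocal inequality $1/h_{k+1}\geq 1/h_k+q$ follows, and telescoping with $h_0=1$ gives the claim. The paper itself offers no proof here, merely citing Dunn's Lemma 5.1, whose standard argument is exactly this passage to reciprocals and telescoping, so your proposal supplies a correct self-contained version of the same approach.
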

\begin{proof}
See~\cite[Lemma 5.1]{Dunn79}.
\end{proof}
\begin{theorem}
\label{thm:slowconvergence}
Let Assumptions~\ref{ass:functions} and~\ref{ass:lipschitz} hold. Then Algorithm~\ref{alg:gcg} either terminates after~$k \in \N$ iterations with~$u^k$ a minimizer to~\eqref{def:problem} or it generates a sequence~$u^k$,~$k\in \N$. In the second case we have
\begin{align} \label{eq:slowrate}
r_j(u^k) \leq \frac{r_j(u^0)}{1+qk}, \quad q= \alpha  \min\left\{\frac{(1-\alpha)\gamma r_j(u^0)}{2L_{u^0}M^2_*},1\right\}.
\end{align}
Moreover there exists a weak* accumulation point~$\optu$ of~$u^k$,~$k\in\N$, and
every such point is a global minimum of j. Finally, if the solution~$\optu$ to~\eqref{def:problem} is unique, then we have~$u^k \rightharpoonup^* \optu$ for the whole sequence.
\end{theorem}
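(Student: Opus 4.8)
The plan is to dispatch the four assertions in turn, relying entirely on the machinery assembled in Section~\ref{subsec:auxiliary}. The dichotomy between finite termination and an infinite sequence is immediate from the structure of Algorithm~\ref{alg:gcg}: the while-loop runs precisely as long as $\Psi(u^k)>0$, so if the method stops at some finite $k$ it does so with $\Psi(u^k)=0$, which by Theorem~\ref{thm:optimality} means $u^k$ is a global minimizer of $j$; otherwise an infinite sequence is produced, and it is this case that requires the work below. Note that in the infinite case $r_j(u^0)>0$, so the constant $q$ in~\eqref{eq:slowrate} is strictly positive.

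For the sublinear rate I would start from Proposition~\ref{prop:recursion} with the admissible choice of $q_k$ furnished by Lemma~\ref{lem:boundofvkanduk}, namely $q_k=\alpha\min\{(1-\alpha)\gamma r_j(u^k)/(2L_{u^0}M_*^2),1\}$, which gives $r_j(u^{k+1})\leq(1-q_k)r_j(u^k)$. Since $0<q_k\leq\alpha\leq 1/2$, the residuals are non-increasing, so in particular $r_j(u^k)\leq r_j(u^0)$ for all $k$. Normalizing $h_k\coloneqq r_j(u^k)/r_j(u^0)$ (so $h_0=1$), the central step is to absorb the $k$-dependence of $q_k$ into the clean quadratic recursion $h_{k+1}\leq h_k-q\,h_k^2$ with the $k=0$ constant $q$ from~\eqref{eq:slowrate}. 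Writing $D=(1-\alpha)\gamma/(2L_{u^0}M_*^2)$, $a=r_j(u^k)$, $b=r_j(u^0)$, this amounts to the elementary comparison $\min\{Da,1\}\geq\min\{Db,1\}\,(a/b)$ for $0<a\leq b$; checking it by distinguishing the cases $Db\leq 1$ and $Db>1$ (and within the latter $Da\leq 1$ versus $Da>1$) yields $q_k\geq q\,h_k$, hence $h_{k+1}\leq h_k-q\,h_k^2$. An application of the recursion Lemma~\ref{lem:recursion1} then delivers $h_k\leq 1/(1+qk)$, which is exactly~\eqref{eq:slowrate}.

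The rate immediately forces $r_j(u^k)\to 0$, and hence $j(u^k)\to\min_{u}j$. To produce an accumulation point I observe that $u^k\in E_j(u^0)$ for every $k$, since the residuals decrease; by Proposition~\ref{prop:existence} this set is weak* sequentially compact, so some subsequence satisfies $u^{k_l}\weakstar\optu$. Weak* lower semicontinuity of $j$ (Assumption~\ref{ass:functions}, $\mathbf{A1}$ and $\mathbf{A2}$) then gives $j(\optu)\leq\liminf_{l} j(u^{k_l})=\min_{u} j$, so any such $\optu$ is a global minimizer.

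Finally, if the minimizer $\optu$ is unique, I would conclude convergence of the whole sequence by the standard subsequence principle: every subsequence of $(u^k)$ again lies in the weak* sequentially compact set $E_j(u^0)$ and therefore possesses a further subsequence converging weak* to some accumulation point, which by the previous paragraph must equal the unique $\optu$. Since every subsequence admits a sub-subsequence with the same limit $\optu$, the entire sequence converges, i.e.\ $u^k\weakstar\optu$. The one step demanding genuine care is the rate itself—specifically the monotonicity-based comparison that lets the varying $q_k$ be replaced by the fixed $q$—whereas the compactness and lower-semicontinuity arguments are routine.
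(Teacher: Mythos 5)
Your proposal is correct and follows essentially the same route as the paper: the finite-termination dichotomy via $\Psi(u^k)=0$ and Theorem~\ref{thm:optimality}, the rate via Proposition~\ref{prop:recursion} with the $q_k$ from Lemma~\ref{lem:boundofvkanduk} reduced to the normalized quadratic recursion $h_{k+1}\leq h_k-qh_k^2$ and Lemma~\ref{lem:recursion1}, and the accumulation/uniqueness claims via weak* sequential compactness of $E_j(u^0)$, lower semicontinuity of $j$, and the subsequence principle. Your explicit case-checked comparison $\min\{Da,1\}\geq\min\{Db,1\}(a/b)$ for $0<a\leq b$ is exactly the (slightly more tersely written) estimate the paper uses to replace the varying $q_k$ by the fixed $q$.
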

\begin{proof}
If Algorithm~\ref{alg:gcg} terminates after~$k$ iterations then we have~$\Psi(u^k)=0$. Consequently~$u^k$ is a minimizer to~\eqref{def:problem}, see Theorem~\ref{thm:optimality}, and nothing more is to prove. Now assume that Algorithm~\ref{alg:gcg} does not terminate after finitely many iterations and recall the definition of~$q_k>0$ from~\eqref{defqkslow}. Then we estimate
\begin{align*}
q_k r_j(u^k)  &\geq \alpha  \min\left\{\frac{(1-\alpha)\gamma r_j(u^0)}{2L_{u^0}M^2_*},\frac{r_j(u^0)}{r_j(u^k)}\right\} \frac{r_j(u^k)^2}{r_j(u^0)} \\
&\geq \alpha \min\left\{\frac{(1-\alpha) \gamma r_j(u^0)}{2L_{u^0}M^2_*},1\right\} \frac{r_j(u^k)^2}{r_j(u^0)}
\end{align*}
Hence, divding by~$r_j(u^0)$ in~\eqref{eq:descentqk}, yields
\begin{align*}
\frac{r_j(u^{k+1})}{r_j(u^0)} \leq \frac{r_j(u^k)}{r_j(u^0)}- q \left(\frac{r_j(u^k)^2}{r_j(u^0)} \right)^2.
\end{align*}
with~$q$ defined as in~\eqref{eq:slowrate}.
The claimed convergence rate is then obtained directly from Lemma~\ref{lem:recursion1}.

Now, since~$u^k$ is bounded with respect to~$\unorm{\cdot}$, it admits at least one weak* convergent subsequence denoted by the same subscript with limit~$\optu$. Due to the weak* lower semicontinuity of~$j$ and the definition of the residual~$r_j(u^k)$ we get
\begin{align*}
0 \leq r_j(\optu) \leq \liminf_{k \rightarrow \infty} r_j(u^k)=0.
\end{align*}
Hence~$r_j(\optu)=0$ and~$\optu$ is a minimizer to~\eqref{def:problem}. Finally if the minimizer~$\optu$ to~\eqref{def:problem} is unique, then every~weak* convergent subsequence of~$u^k$ converges to~$\optu$ which yields weak* convergence of the whole sequence. This finishes the proof.
\end{proof}
\subsection{Improved convergence rates under strengthend first order conditions} \label{subsec:fastconv}
Next we will improve on the convergence result of Theorem~\ref{thm:slowconvergence} provided that the following additional assumptions on Problem~\eqref{def:problem} hold:
\begin{assumption} \label{ass:fastconv}
\phantom{ d}
\begin{itemize}
\item[\textbf{B1}] Assumptions~\ref{ass:functions} and~\ref{ass:lipschitz} hold.
\item[\textbf{B2}] There is a minimizer~$\optu$ to which satisfies
\begin{align*}
\langle  \nabla f(\optu),u- \optu \rangle+g(u)-g(\optu) \geq \theta \, \norm{u-\optu}^q_* \quad \forall u \in \mathcal{U}
\end{align*}
 for some~$\theta>0$ and~$q\geq 2$.
\end{itemize}
\end{assumption}
We start by noting that~$\optu$ from Assumption~\ref{ass:fastconv}~\textbf{B2} is indeed the unique solution to~\eqref{def:problem} and~$(\mathcal{P}_{\text{lin}}(\optu))$.
\begin{lemma} \label{lem:estonukvk}
Let Assumption~\ref{ass:fastconv}~\textbf{B2} hold for some~$\optu$. Then~$\optu$ is the unique solution to~\eqref{def:problem} and~$(\mathcal{P}_{\text{lin}}(\optu))$.
\end{lemma}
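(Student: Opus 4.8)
The plan is to read Assumption~\ref{ass:fastconv}~\textbf{B2} as a growth statement about the \emph{linearized} objective, and then to transfer that growth to~$j$ itself by convexity of~$f$. Both assertions of the lemma then fall out almost immediately, the only genuine input being that~$\norm{\cdot}_*$ is a bona fide norm.

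First I would dispose of the linearized problem~$(\mathcal{P}_{\text{lin}}(\optu))$. Its objective is~$h(v)=\langle \nabla f(\optu),v\rangle+g(v)$, so \textbf{B2} is nothing but the rearranged inequality
\begin{align*}
h(u)\geq h(\optu)+\theta\,\norm{u-\optu}_*^q\quad \forall u\in\mathcal{U}.
\end{align*}
Since~$\norm{\cdot}_*$ is definite, the right-hand side strictly exceeds~$h(\optu)$ whenever~$u\neq\optu$ (here~$\theta>0$ and~$q\geq 2$ guarantee strict positivity of the growth term off~$\optu$). Hence~$\optu$ is the unique minimizer of~$h$, i.e. the unique solution of~$(\mathcal{P}_{\text{lin}}(\optu))$. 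The cases~$u\notin\dom g$ are trivial since then~$h(u)=+\infty$.

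Next I would treat~$(\mathcal{P})$. As~$\optu$ is \emph{assumed} to be a minimizer, only uniqueness remains. The key step is the gradient inequality for the convex function~$f$, namely~$f(u)-f(\optu)\geq\langle \nabla f(\optu),u-\optu\rangle$, which combined with \textbf{B2} yields the growth estimate announced in the discussion preceding Assumption~\ref{ass:fastconv}:
\begin{align*}
j(u)-j(\optu)\geq \langle \nabla f(\optu),u-\optu\rangle+g(u)-g(\optu)\geq \theta\,\norm{u-\optu}_*^q\quad \forall u\in\mathcal{U}.
\end{align*}
For any minimizer~$u^\ast$ of~$(\mathcal{P})$ the left-hand side vanishes, so~$\theta\,\norm{u^\ast-\optu}_*^q\leq 0$, forcing~$\norm{u^\ast-\optu}_*=0$ and thus~$u^\ast=\optu$.

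I do not expect a substantial obstacle, as the statement is in essence a reformulation of \textbf{B2}. The points requiring care are purely that~$\norm{\cdot}_*$ is definite (so the growth term vanishes exactly at~$\optu$ and nowhere else) and that one correctly invokes convexity of~$f$ rather than of~$j$ to pass from the linearized growth condition to the growth of the residual~$r_j$; the latter inequality is precisely the one that will subsequently drive the improved convergence rates.
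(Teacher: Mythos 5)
Your proof is correct and takes essentially the same route as the paper's: convexity of $f$ upgrades \textbf{B2} to the growth estimate $j(u)-j(\optu)\geq\theta\norm{u-\optu}_*^q$, which rules out a second minimizer of~\eqref{def:problem}, while \textbf{B2} itself is already a growth condition for the linearized objective around~$\optu$. The only cosmetic difference is that the paper establishes that~$\optu$ solves~$(\mathcal{P}_{\text{lin}}(\optu))$ by noting~$\Psi(\optu)=0$, whereas you read this off directly from \textbf{B2}; both are fine.
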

\begin{proof}
Let us assume that there is a second minimizer~$\optu_2 \neq \optu$ to~\eqref{def:problem}. Due to the convexity of~$f$ we get
\begin{align*}
0=j(\optu_2)-j(\optu) \geq \langle  \nabla f(\optu),\optu_2- \optu \rangle+g(\optu_2)-g(\optu) \geq \theta \, \norm{\optu_2-\optu}^q_*>0
\end{align*}
which yields a contradiction. Hence the minimizer of~\eqref{def:problem} is unique. Further $\Psi(\bar u)=0$ and hence $\bar u$ is a minimizer for $(\mathcal{P}_{\text{lin}}(\optu))$. As a consequence of \textbf{B2} it is also unique.
\end{proof}
Now, before proceeding to the main results of this section, let us give some intuition to the role of Assumption~\ref{ass:fastconv} in the following investigations. For this purpose recall the descent inequality
\begin{align*}
r_j(u^{k+1})-r_j(u^k) \leq  -s^k\Psi(u^k)+\frac{L_{u_0}}{2} \left(s^k\stnorm{u^k-v^k}\right)^2.
\end{align*}
from Lemma~\ref{lem:descentlem}. As already discussed in Proposition~\ref{prop:recursion}, the per-iteration descent of Algorithm~\ref{alg:gcg} depends on the asymptotic behaviour of~$\Psi(u^k)$ in relation to~$\stnorm{v^k-u^k}$. In general, the "bad" term~$\stnorm{v^k-u^k}$ does not converge to zero. This is due to the (potential) nonuniqueness of the minimizer to the linearized problem~\eqref{def:linprob}.
Using Assumption~\ref{ass:fastconv}, however, we cannot only show that~$v^k \rightarrow \optu$,~$u^k \rightarrow \optu$ but also get a quantitative estimate for~$\stnorm{v^k-u^k}$ in terms of the residual.
\begin{lemma} \label{lem:estsforcontrol}
Let Assumption~\ref{ass:fastconv}~$u^k$ hold  and let ~$v^k$,~$k \in \N$, be generated by Algorithm~\ref{alg:gcg}. Then there holds
\begin{align} \label{eq:contest1}
\stnorm{v^k-\optu} \leq \left(\frac{L_{u^0}}{\theta} \right)^{\frac{1}{(q-1)}} \,\stnorm{u^k-\optu}^{\frac{1}{(q-1)}},~\stnorm{u^k-\optu} \leq \left(\frac{1}{\theta}\right)^{\frac{1}{q}}\, r_j(u^k)^{\frac{1}{q}}
\end{align}
In particular, this implies
\begin{align} \label{eq:contest2}
\stnorm{u^k-v^k} \leq c_1 \, r_j(u^k)^{\frac{1}{q}} + c_2 \,r_j(u^k)^{\frac{1}{q(q-1)}}.
\end{align}
for some~$c_1,c_2 >0$ independent of~$k\in \N$ as well as
\begin{align} \label{eq:convofukvk}
\lim_{k \rightarrow \infty} \left \lbrack \stnorm{v^k-u^k}+\stnorm{v^k-\optu}+\stnorm{u^k-\optu}\right \rbrack =0.
\end{align}
\end{lemma}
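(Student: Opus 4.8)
The plan is to derive the two bounds in \eqref{eq:contest1} first, since \eqref{eq:contest2} and \eqref{eq:convofukvk} then follow by the triangle inequality together with the fact that $r_j(u^k)\to 0$, which is guaranteed by Theorem~\ref{thm:slowconvergence}.

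First I would establish the second bound in \eqref{eq:contest1}, controlling $\stnorm{u^k-\optu}$ by the residual. Applying Assumption~\ref{ass:fastconv}~\textbf{B2} with $u=u^k$ gives $\theta\,\stnorm{u^k-\optu}^q \leq \langle \nabla f(\optu), u^k-\optu\rangle + g(u^k)-g(\optu)$, and the convexity of $f$ (so that $f(u^k)-f(\optu)\geq \langle \nabla f(\optu),u^k-\optu\rangle$) bounds the right-hand side above by $j(u^k)-j(\optu)=r_j(u^k)$. Rearranging yields the claimed bound $\stnorm{u^k-\optu}\leq (1/\theta)^{1/q} r_j(u^k)^{1/q}$ at once.

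Next I would treat the first, more delicate bound in \eqref{eq:contest1}. Applying \textbf{B2} with $u=v^k$ produces the lower estimate $\theta\,\stnorm{v^k-\optu}^q \leq \langle \nabla f(\optu),v^k-\optu\rangle + g(v^k)-g(\optu)$. The key idea is to exploit the optimality of $v^k$ for the linearized problem \eqref{def:linprob}: testing this optimality against $\optu$ gives $\langle \nabla f(u^k),v^k-\optu\rangle + g(v^k)-g(\optu)\leq 0$. Writing $\nabla f(\optu)=(\nabla f(\optu)-\nabla f(u^k))+\nabla f(u^k)$ and substituting, the $\nabla f(u^k)$-terms are absorbed by this nonpositive quantity, leaving the upper bound $\langle \nabla f(\optu)-\nabla f(u^k),v^k-\optu\rangle$. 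This is exactly what Assumption~\ref{ass:lipschitz}~\textbf{A5} controls, with the choice $u_1=\optu$, $u_2=u^k$, $u_3=v^k$, $u_4=\optu$, which is admissible since $u^k,\optu\in E_j(u^0)$ and $v^k,\optu\in\dom j$. Combining gives $\theta\,\stnorm{v^k-\optu}^q \leq L_{u^0}\,\stnorm{u^k-\optu}\,\stnorm{v^k-\optu}$; dividing by $\stnorm{v^k-\optu}$ (the bound being trivial when it vanishes) and taking the $(q-1)$-th root yields the first estimate.

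With both estimates available, \eqref{eq:contest2} follows from $\stnorm{u^k-v^k}\leq \stnorm{u^k-\optu}+\stnorm{v^k-\optu}$ after inserting the residual bound for $\stnorm{u^k-\optu}$ into the first estimate and collecting the two resulting powers $r_j(u^k)^{1/q}$ and $r_j(u^k)^{1/(q(q-1))}$ into constants $c_1,c_2>0$. Finally, \eqref{eq:convofukvk} is immediate from Theorem~\ref{thm:slowconvergence}, since $r_j(u^k)\to 0$ forces all three quantities, each bounded by positive powers of $r_j(u^k)$, to vanish in the limit. The only genuinely nontrivial step is the gradient decomposition combined with the optimality test in the second estimate; everything else is routine bookkeeping with the growth exponent $q$.
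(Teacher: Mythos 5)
Your proposal is correct and follows essentially the same route as the paper: the growth condition \textbf{B2} tested at $u^k$ together with convexity of $f$ gives the residual bound, while \textbf{B2} tested at $v^k$, combined with the optimality of $v^k$ for \eqref{def:linprob} against $\optu$ and the Lipschitz estimate \textbf{A5} on the gradient difference, gives the first bound; the remaining claims follow by the triangle inequality and $r_j(u^k)\to 0$. Your explicit remark that the division by $\stnorm{v^k-\optu}$ is harmless when that norm vanishes is a small point of care the paper glosses over.
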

\begin{proof}
Recall the definition of the dual gap~$\Psi(u^k)\geq 0$, see~\eqref{def:gap}.
Using Assumption~\ref{ass:fastconv}~$\mathbf{B2}$ , the choice of $v_k$ as minimizer to {$(\mathcal{P}_{\text{lin}}(u_k))$}, and  $\mathbf{A5}$ we estimate
 \begin{align*}
\theta \,\stnorm{v^k-\bar{u}}^{q} &\leq \langle \nabla f(\bar{u}),v^k-\optu \rangle + g(v^k)-g(\optu) \\ & \leq
\langle \nabla f(\bar{u})-\nabla f(u^k),v^k-\optu \rangle + \langle \nabla f(u^k),v^k-u^k \rangle + g(v^k)-g(\optu)
\\ & \leq \langle \nabla f(\bar{u})-\nabla f(u^k),v^k-\optu \rangle \leq L_{u^k}\, \stnorm{u^k-\bar{u}}\,\stnorm{(v^k-\bar{u}}.
\end{align*}
Noting that~$L_{u^0} \geq L_{u^k} $ proves the first inequality in~\eqref{eq:contest1}. Now, by convexity of~$f$, we get
\begin{align*}
r_j(u^k) \geq \langle \nabla f(\optu), u^k-\optu \rangle+g(u^k)-g(\optu) \geq \theta \,\stnorm{u^k-\optu}^q.
\end{align*}
Dividing both sides by~$\theta$ and taking the root finishes the proof of~\eqref{eq:contest1}. The inequality in~\eqref{eq:contest2} is readily obtained by the triangle inequality:
\begin{align*}
\stnorm{u^k-v^k} \leq \stnorm{u^k-\optu}+\stnorm{v^k-\optu} &\leq \stnorm{u^k-\optu}+\left(\frac{L_{u^0}}{\theta} \right)^{\frac{1}{(q-1)}} \,\stnorm{u^k-\optu}^{\frac{1}{(q-1)}} \\
&\leq \stnorm{u^k-\optu}+\left(\frac{L_{u^0}}{\theta} \right)^{\frac{1}{(q-1)}} \left(\frac{1}{\theta}\right)^{\frac{1}{q(q-1)}} \,r_j(u^k)^{\frac{1}{q(q-1)}}
\\
&\leq \left(\frac{1}{\theta}\right)^{\frac{1}{q}}\, r_j(u^k)^{\frac{1}{q}}+\left(\frac{L_{u^0}}{\theta} \right)^{\frac{1}{(q-1)}} \left(\frac{1}{\theta}\right)^{\frac{1}{q(q-1)}} \,r_j(u^k)^{\frac{1}{q(q-1)}}.
\end{align*}
Finally, the statement in~\eqref{eq:convofukvk} follows due to~$r_j(u^k)\rightarrow 0$.
\end{proof}
In the following we will use Lemma~\ref{lem:estonukvk} to construct refined sequences~$q_k$ which finally yield improved rates of convergence for~$r_j(u^k)$. By a bootstrapping argument we then also get matching convergence rates for the iterates. We start with the case~$q=2$.
\begin{theorem}
\label{thm:fastconvergence1}
Let Assumption~\ref{ass:fastconv} hold with~$q=2$. Then Algorithm~\ref{alg:gcg} either terminates after~$k \in \N$ iterations with~$u^k$ a minimizer to~\eqref{def:problem} or it generates sequences~$u^k,\, v^k$,~$k\in \N$, with~$u^k \rightharpoonup^* \optu$. In the second case we have
\begin{align*}
r_j(u^k) \leq \lambda^k\, r_j(u^0) \quad \text{where}~\lambda \coloneqq \max \left\{1-\frac{2\alpha \gamma(1-\alpha)}{L_{u^0}\bar{c}^2}, 1-\alpha\right\}
\end{align*}
and~$\bar{c}=c_1+c_2$, see~\eqref{eq:contest2}, and~$\stnorm{u^k-\optu}=\mathcal{O}(\lambda^{k/2})$,~$\stnorm{v^k-\optu}=\mathcal{O}(\lambda^{k/2})$.
\end{theorem}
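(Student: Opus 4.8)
The plan is to feed the estimates of Lemma~\ref{lem:estsforcontrol} into the recursion of Proposition~\ref{prop:recursion} and to exploit the fact that, for the exponent~$q=2$, the two powers of~$r_j(u^k)$ appearing in~\eqref{eq:contest2} coincide. The termination case requires no argument: if the loop stops after~$k$ iterations then~$\Psi(u^k)=0$ and Theorem~\ref{thm:optimality} identifies~$u^k$ as a minimizer. So I would assume from here on that~$\Psi(u^k)>0$ for all~$k\in\N$ and that the algorithm produces infinite sequences~$u^k,\,v^k$.

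The heart of the proof is to show that the per-iteration contraction factor~$q_k$ admitted by~\eqref{defqk} can be taken constant in~$k$. Specializing~\eqref{eq:contest2} to~$q=2$ gives~$1/q=1/(q(q-1))=1/2$, so that~$\stnorm{u^k-v^k}\leq \bar{c}\,r_j(u^k)^{1/2}$ with~$\bar{c}=c_1+c_2$, whence~$\stnorm{u^k-v^k}^2\leq \bar{c}^{\,2}\,r_j(u^k)$. Combining this with the lower bound~$\Psi(u^k)\geq r_j(u^k)$ from~\eqref{eq:boundforres}, the ratio inside~\eqref{defqk} is bounded below by a constant,
\begin{align*}
\frac{2(1-\alpha)\gamma\,\Psi(u^k)}{L_{u^0}\,\stnorm{v^k-u^k}^2}\geq \frac{2(1-\alpha)\gamma}{L_{u^0}\,\bar{c}^{\,2}}.
\end{align*}
Accordingly I would choose~$q_k\equiv \alpha\min\{2(1-\alpha)\gamma/(L_{u^0}\bar{c}^{\,2}),\,1\}$, for which~$1-q_k=\lambda$. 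Proposition~\ref{prop:recursion} then yields~$r_j(u^{k+1})\leq \lambda\,r_j(u^k)$, and a trivial induction delivers the claimed linear rate~$r_j(u^k)\leq \lambda^k r_j(u^0)$.

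The convergence rates for the iterates follow by transporting this rate through~\eqref{eq:contest1}. For~$q=2$ the second estimate there reads~$\stnorm{u^k-\optu}\leq \theta^{-1/2} r_j(u^k)^{1/2}\leq (r_j(u^0)/\theta)^{1/2}\lambda^{k/2}$, giving~$\stnorm{u^k-\optu}=\mathcal{O}(\lambda^{k/2})$; the first estimate, whose exponent~$1/(q-1)$ equals~$1$, then propagates the same order to~$\stnorm{v^k-\optu}$. Finally, the weak* convergence~$u^k\weakstar\optu$ is not proved afresh: Lemma~\ref{lem:estonukvk} guarantees that~$\optu$ is the unique minimizer of~\eqref{def:problem}, so the subsequential weak* convergence already established in Theorem~\ref{thm:slowconvergence} upgrades to convergence of the whole sequence.

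I do not expect a genuine obstacle here; the only point needing care is the algebraic reason why~$q=2$ is special, namely that it is exactly the exponent for which~$\stnorm{u^k-v^k}^2$ and~$r_j(u^k)$ are of the same order, so that the factor~$r_j(u^k)$ in the numerator of~\eqref{defqk} cancels and leaves a~$k$-independent contraction constant. For~$q>2$ this cancellation fails and one is forced into the slower, merely sublinear analysis of Theorem~\ref{thm:fastconvergence2}.
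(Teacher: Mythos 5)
Your proposal is correct and takes essentially the same route as the paper's proof: it bounds $\stnorm{v^k-u^k}^2\le \bar c^{\,2}\,r_j(u^k)$ via Lemma~\ref{lem:estsforcontrol} (both exponents in~\eqref{eq:contest2} equal $1/2$ when $q=2$), combines this with $\Psi(u^k)\ge r_j(u^k)$ to obtain a $k$-independent $q_k$ in Proposition~\ref{prop:recursion}, and then transports the resulting linear rate to $u^k$ and $v^k$ through~\eqref{eq:contest1}. (Your version even writes the denominator correctly as $\bar c^{\,2}$ where the paper's intermediate display has a harmless typo $\bar c$.)
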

\begin{proof}
First note that Assumption~\ref{ass:fastconv} subsumes Assumptions~\ref{ass:functions} and~\ref{ass:lipschitz}. Hence Theorem~\ref{thm:slowconvergence} holds and we have~$u^k \rightharpoonup^* \optu$ if Algorithm~\ref{alg:gcg} does not stop after finitely many iterations. Using Lemma~\ref{lem:estsforcontrol}, Theorem \ref{thm:optimality},  and setting~$\bar{c}=c_1+c_2$ we estimate
\begin{align*}
\alpha \, \min \left\{\frac{2\gamma (1-\alpha)\Psi(u^k)}{L_{u^0}\, \stnorm{v_k-u_k}^2}, 1\right\} \geq q_k \coloneqq \alpha \, \min \left\{\frac{2\gamma(1-\alpha)}{L_{u^0}\, \bar{c}}, 1\right\}
\end{align*}
Hence, Proposition~\ref{prop:recursion} yields
\begin{align*}
r_j(u^{k+1}) \leq  \max \left\{1-\frac{2\alpha \gamma(1-\alpha)}{L_{u^0}\bar{c}^2}, 1-\alpha\right\} r_j(u^{k}).
\end{align*}
By induction over~$k$ we arrive at the desired convergence rate. The convergence results for~$u^k$ and~$v^k$, respectively, then follow from~\eqref{eq:contest1}.
\end{proof}
In order to deal with the case~$q>2$ we need yet another recursive lemma.
\begin{lemma}
\label{lem:recursion2}
Let~$h_k \geq 0$,~$k \in \N$, denote a sequence of numbers which satisfy
\begin{align*}
h_{k+1} \leq \max \left\{ \delta,1-C h_k^\beta\right\}\, h_k
\end{align*}
for all~$k \in \N$ and some~$C>0$,~$\delta \in [1/2,1),~0<\beta<1$.
Then we also have
\begin{align*}
h_k \leq \frac{M}{(k+n)^{1/\beta}}
\end{align*}
with constants
\begin{align*}
n\coloneqq\frac{2-(1/\delta)^\beta}{(1/\delta)^\beta-1},~M\coloneqq \max \left\{h_0 n^{1/\beta},\frac{1}{\delta\left(\left(\beta-(1-\beta)(2^\beta-1)C\right)\right)^{1/\beta}}\right\}.
\end{align*}
\end{lemma}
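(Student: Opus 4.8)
The statement is a deterministic recursion inequality with no reference to the earlier sections, so the natural plan is a proof by induction on $k$ of the equivalent claim $h_k\,(k+n)^{1/\beta}\le M$. Writing $b_k\coloneqq M(k+n)^{-1/\beta}$ for the target bound, the base case $k=0$ is exactly the requirement $h_0\le M n^{-1/\beta}$, which holds because $M\ge h_0 n^{1/\beta}$ by definition. For the inductive step I would assume $h_k\le b_k$ and bound $h_{k+1}$ by passing to the worst case over the admissible range, i.e.\ by showing $\Phi(b_k)\le b_{k+1}$, where $\Phi(t)\coloneqq\sup_{0\le x\le t}\max\{\delta,\,1-Cx^\beta\}\,x$; since $h_{k+1}\le\max\{\delta,1-Ch_k^\beta\}h_k$ and $h_k\in[0,b_k]$, this suffices. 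The crossover between the two branches of the inner maximum occurs at $x_\star\coloneqq((1-\delta)/C)^{1/\beta}$, where $1-Cx_\star^\beta=\delta$.

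To evaluate $\Phi(b_k)$ I would first record that $x\mapsto(1-Cx^\beta)x=x-Cx^{1+\beta}$ is increasing on $[0,x_\star]$: its derivative $1-C(1+\beta)x^\beta$ is nonnegative throughout $[0,x_\star]$ exactly when $(1+\beta)(1-\delta)\le1$, i.e.\ when $\delta\ge\beta/(1+\beta)$, and this is guaranteed by the hypothesis $\delta\ge1/2$ --- precisely the point where that hypothesis enters. Consequently $\Phi(b_k)=b_k-Cb_k^{1+\beta}$ when $b_k\le x_\star$, while $\Phi(b_k)=\delta b_k$ when $b_k> x_\star$ (the linear branch $\delta x$ then dominates, the two branches agreeing at the value $\delta x_\star$ at $x_\star$). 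In the polynomial subcase one reduces $b_k-Cb_k^{1+\beta}\le b_{k+1}$ to $C b_k^\beta\ge 1-b_{k+1}/b_k$ and estimates $b_k-b_{k+1}=M\big((k+n)^{-1/\beta}-(k+n+1)^{-1/\beta}\big)$ from above by the mean value theorem applied to $a\mapsto a^{-1/\beta}$; comparing powers of $k+n$ then yields a lower bound of the form $M^\beta\ge c_0/C$, which is the content of the second entry in the definition of $M$ (the factors $2^\beta-1$ and $1-\beta$ arising from retaining the first Taylor remainder and from the crude bound $k+n+1\le 2(k+n)$). In the geometric subcase the requirement $\delta b_k\le b_{k+1}$ is equivalent to $\delta^\beta(k+n+1)\le k+n$, and the choice $n=(2-\delta^{-\beta})/(\delta^{-\beta}-1)$ is made exactly so that $\delta^\beta/(1-\delta^\beta)=n+1$, whence this inequality holds for every $k\ge1$.

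The main obstacle is the bookkeeping at the crossover, and in particular the very first step. The geometric estimate $\delta b_k\le b_{k+1}$ only becomes available for $k\ge1$, since at $k=0$ one has $n<n+1$; correspondingly, the polynomial target $b_k$ is required to decay by the factor $(n/(n+1))^{1/\beta}<\delta$ over the first step, so the slower geometric decay $\delta$ cannot by itself fit under $b_k$ at the smallest indices. The delicate part of the argument is therefore to show that the admissible range $[0,b_k]$ never actually forces the geometric branch at $k=0$ --- equivalently, that $M$ and $n$ can be chosen so that $b_0\le x_\star$ whenever the geometric branch would otherwise be active, after which $h_k\le x_\star$ is self-propagating through $h_{k+1}\le h_k\le x_\star$ and the iteration stays in the polynomial regime. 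I expect the remaining effort to be the routine but careful verification that the stated $M$ and $n$ simultaneously close the base case, the polynomial step, and this consistency condition; the two analytic ingredients themselves --- monotonicity via $\delta\ge1/2$ and the mean value estimate for $a\mapsto a^{-1/\beta}$ --- are elementary.
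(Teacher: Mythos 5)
First, a point of comparison: the paper does not actually prove Lemma~\ref{lem:recursion2} --- its ``proof'' is a one-line deferral to Theorem~1 of the cited reference, where only the case $\delta=1/2$ is treated --- so your sketch must be judged on its own merits rather than against an in-paper argument. Your preparatory observations are correct: $\phi(x)=\max\{\delta,1-Cx^{\beta}\}x$ is nondecreasing on $[0,\infty)$ because the polynomial branch increases up to the crossover $x_\star=((1-\delta)/C)^{1/\beta}$ whenever $\delta\ge\beta/(1+\beta)$ (which $\delta\ge 1/2$ guarantees); the identity $n+1=\delta^{\beta}/(1-\delta^{\beta})$ holds; and consequently the linear-branch inequality $\delta b_k\le b_{k+1}$, with $b_k=M(k+n)^{-1/\beta}$, holds exactly for $k\ge1$.

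The gap is the step you explicitly defer, and it cannot be closed by the route you propose. If the first entry of the maximum defining $M$ is active, then $M=h_0 n^{1/\beta}$ and the base case is tight, $b_0=h_0$; nothing in the hypotheses prevents $h_0>x_\star$ (take $h_0$ large or $C$ small), and then the recursion admits $h_1=\delta h_0=\delta b_0$. By your own computation $b_1/b_0=(n/(n+1))^{1/\beta}<\delta$ --- equivalently $2-\delta^{-\beta}<\delta^{\beta}$, i.e.\ $(\delta^{\beta}-1)^2>0$ --- so $h_1>b_1$ and the invariant $h_k\le b_k$ already fails at $k=1$. Hence the ``consistency condition'' you hope to verify (that the geometric branch is never forced at $k=0$) is simply false for the stated constants; concretely, with $\delta=\beta=1/2$, $C=10^{-2}$, $h_0=10^{4}$ one gets $b_0=h_0$, $x_\star=2500$, and the admissible value $h_1=5000$ exceeds $b_1\approx 3432$. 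This is more than a hole in your write-up: no induction on $h_k\le b_k$ alone can succeed, and the statement as literally given requires either an additional smallness hypothesis on $h_0$ (placing the iteration in the polynomial regime from the start) or an argument that handles an initial geometric phase separately with a shifted index before the polynomial bound takes over. Your treatment of the polynomial step is likewise only gestured at --- the provenance you suggest for the factors $2^{\beta}-1$ and $1-\beta$ in the second entry of $M$ is a guess, not a verification --- but the decisive defect is the one above, and it is a defect your own calculation of $b_1/b_0<\delta$ already exposes.
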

\begin{proof}
This result can be proven analogously to \cite[Theorem 1]{xufrank} where the particular case of~$\delta=1/2$ was treated. We omit a detailed proof at this point.
\end{proof}
With Lemma~\ref{lem:recursion2} at hand, the following improved sublinear rate can be derived.
\begin{theorem}
\label{thm:fastconvergence2}
Let Assumption~\ref{ass:fastconv} hold with~$q>2$. Then Algorithm~\ref{alg:gcg} either terminates after~$k \in \N$ iterations with~$u^k$ a minimizer to~\eqref{def:problem} or it generates sequences~$u^k$,~$v^k$,~$k\in \N$, with~$u^k \rightharpoonup^* \optu$ and~$r_j(u^k)= \mathcal{O}(1/(k+1))$. Set
\begin{align*}
\delta=1-\alpha,~\beta=1-\frac{2}{q(q-1)},~\bar{c}=c_1+c_2,~C= \frac{2\alpha \gamma(1-\alpha)}{L_{u^0}\bar{c}^2}.
\end{align*}
Then there is an index~$K \in \N$ such that
\begin{align*}
r_j(u^k) \leq \frac{M}{(k-K+n)^{1/\beta}}
\end{align*}
for all~$k\geq K$ and
\begin{align*}
n\coloneqq\frac{2-(1/\delta)^\beta}{(1/\delta)^\beta-1},~M\coloneqq \max \left\{r_j(u^K) n^{1/\beta},\frac{1}{\delta\left(\left(\beta-(1-\beta)(2^\beta-1)C\right)\right)^{1/\beta}}\right\}.
\end{align*}
\end{theorem}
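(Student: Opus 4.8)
The plan is to reduce everything to the recursion lemma, Lemma~\ref{lem:recursion2}, by manufacturing a sharpened decrease estimate for the residual that is valid for all sufficiently large iteration indices. The statements about finite termination, the weak* convergence $u^k \rightharpoonup^* \optu$, and the baseline rate $r_j(u^k) = \mathcal{O}(1/(k+1))$ are not new here: since Assumption~\ref{ass:fastconv} subsumes Assumptions~\ref{ass:functions} and~\ref{ass:lipschitz}, they follow verbatim from Theorem~\ref{thm:slowconvergence}. I would therefore dispose of them in one sentence and concentrate on the improved tail rate, assuming from now on that the algorithm does not stop after finitely many steps.

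The core step is to choose a good sequence $q_k$ in Proposition~\ref{prop:recursion}. First I would fix an index $K \in \N$ with $r_j(u^k) \leq 1$ for all $k \geq K$; this exists because $r_j(u^k) \to 0$, a consequence of Theorem~\ref{thm:slowconvergence} (see also~\eqref{eq:convofukvk}). For such $k$ the key observation is that, since $q > 2$ forces $1/(q(q-1)) < 1/q$, the control estimate~\eqref{eq:contest2} simplifies on $[0,1]$ to
\begin{align*}
\stnorm{u^k - v^k} \leq (c_1 + c_2)\, r_j(u^k)^{1/(q(q-1))} = \bar{c}\, r_j(u^k)^{1/(q(q-1))},
\end{align*}
because $r_j(u^k)^{1/q} \leq r_j(u^k)^{1/(q(q-1))}$ whenever $r_j(u^k) \leq 1$. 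Inserting this together with $r_j(u^k) \leq \Psi(u^k)$ from~\eqref{eq:boundforres} into the quantity controlling $q_k$ gives
\begin{align*}
\frac{2(1-\alpha)\gamma\Psi(u^k)}{L_{u^0}\, \stnorm{v^k - u^k}^2} \geq \frac{2(1-\alpha)\gamma}{L_{u^0}\bar{c}^2}\, r_j(u^k)^{1 - 2/(q(q-1))} = \frac{C}{\alpha}\, r_j(u^k)^{\beta},
\end{align*}
with $\beta = 1 - 2/(q(q-1))$ and $C$ as stated. This licenses the admissible choice $q_k = \min\{C\, r_j(u^k)^{\beta},\, \alpha\}$ in~\eqref{defqk}, and Proposition~\ref{prop:recursion} then yields the self-contained recursion
\begin{align*}
r_j(u^{k+1}) \leq \max\left\{1 - C\, r_j(u^k)^{\beta},\, \delta\right\}\, r_j(u^k), \qquad \delta = 1-\alpha,
\end{align*}
valid for every $k \geq K$.

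Finally I would apply Lemma~\ref{lem:recursion2} to the shifted nonnegative sequence $h_m \coloneqq r_j(u^{K+m})$, $m \in \N$. Before doing so one must check its hypotheses: $\beta \in (0,1)$ holds because $q > 2$ makes $0 < 2/(q(q-1)) < 1$, and $\delta = 1 - \alpha \in [1/2, 1)$ holds precisely because $\alpha \in (0, 1/2]$. The lemma then delivers $h_m \leq M/(m+n)^{1/\beta}$ with the displayed $n$ and $M$ (now with $h_0 = r_j(u^K)$), and undoing the shift via $m = k - K$ produces exactly the claimed bound $r_j(u^k) \leq M/(k - K + n)^{1/\beta}$ for $k \geq K$. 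I expect no genuine obstacle beyond careful bookkeeping: the only nontrivial analytic estimate --- turning the recursion into an explicit decay rate --- is exactly what Lemma~\ref{lem:recursion2} provides, so the real work is \emph{(i)} correctly identifying $r_j(u^k)^{1/(q(q-1))}$ as the dominant term in~\eqref{eq:contest2} for small residuals, and \emph{(ii)} matching the resulting constants to $C$, $\beta$, and $\delta$ so that the recursion lemma applies verbatim.
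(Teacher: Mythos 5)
Your proposal is correct and follows essentially the same route as the paper's proof: reduce to Theorem~\ref{thm:slowconvergence} for the qualitative statements, use Lemma~\ref{lem:estsforcontrol} to bound $\stnorm{u^k-v^k}$ by $\bar{c}\,r_j(u^k)^{1/(q(q-1))}$ once $r_j(u^k)\leq 1$, feed the resulting choice of $q_k$ into Proposition~\ref{prop:recursion}, and conclude with Lemma~\ref{lem:recursion2}. You even make explicit a few details the paper leaves implicit (the absorption of the $r_j^{1/q}$ term for small residuals, the verification that $\delta\in[1/2,1)$ and $\beta\in(0,1)$, and the index shift), which is fine.
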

\begin{proof}
Again, since Assumption~\ref{ass:fastconv} subsumes Assumptions~\ref{ass:functions} and~\ref{ass:lipschitz}, Theorem~\ref{thm:slowconvergence} holds and we have~$u^k \rightharpoonup^* \optu$ as well as~$r_j(u^k)\rightarrow 0$ with~$r_j(u^k)=\mathcal{O}(1/(k+1))$. In particular there is~$K \in\N$ with~$r_j(u^k)\leq 1$ for all~$k \geq K$. Invoking Lemma~\ref{lem:estsforcontrol} we thus have
\begin{align} \label{eq:dsmall1}
\stnorm{v^k-u^k}\leq \bar{c} \, r_j(u^k)^{\frac{1}{q(q-1)}} \quad \forall k \geq K
\end{align}
where~$\bar{c}=c_1+c_2$.
Similarly to the Theorem~\ref{thm:fastconvergence1} we now estimate
\begin{align*}
\alpha \, \min \left\{\frac{2\gamma(1-\alpha)\Psi(u^k)}{L_{u^0}\, \stnorm{v_k-u_k}^2}, 1\right\} \geq q_k \coloneqq \alpha \, \min \left\{\frac{2\gamma(1-\alpha)}{L_{u^0}\, \bar{c}^2 r_j(u^k)^{\frac{2}{q(q-1)}}}, 1\right\}
\end{align*}
using~\eqref{eq:dsmall1}. Invoking Proposition~\ref{prop:recursion} yields
\begin{align*}
r_j(u^{k+1}) &\leq (1-q_k)\,r_j(u^k)
\\ & = \max \left\{1-\frac{2\alpha \gamma(1-\alpha)}{L_{u^0}\bar{c}^2} r_j(u^{k})^{1-\frac{2}{q(q-1)}}, 1-\alpha\right\}
r_j(u^{k})
\end{align*}
for all~$k\geq K$. The claimed convergence rates are then directly derived from~Lemma~\ref{lem:recursion2}.
\end{proof}
Convergence rates for the iterates~$u^k$ and descent directions~$v^k$ can now be derived according to Lemma~\ref{lem:estonukvk}.
\begin{remark} \label{rem:possiblybetter}
To end this section let us note that the result of Theorem~\ref{thm:fastconvergence2} can be further improved along the lines of~\cite{kerdreux} if there holds
\begin{align} \label{strenghtimpro}
\langle \nabla f(u^k),u^k-v^k  \rangle \geq \theta \, \stnorm{u^k-v^k}^q
\end{align}
for all~$k \in \N$ large enough and some~$\theta>0$. In this case we expect an improved exponent~$\beta= 1/(1-2/q)$. However, in contrast to~\eqref{eq:strenghtfirstorder}, assumptions of the form~\eqref{strenghtimpro} seem to be too restrictive for the applications that we have in mind. For this reason we do not give further details on this topic.
\end{remark}

\section{Applications} \label{sec:applications}
In this final section, the presented algorithm is applied to two model settings. More in detail, we first consider elliptic optimal control problems with an~$L^1$-type regularizer and pointwise control constraints,~\cite{stadler,wachsmuth2,schindele}. These problems are known to promote~\textit{sparse} minimizers i.e. they are zero on large parts of the spatial domain. Second, we turn to a parabolic problem together with a group sparsity penalty similar to~\cite{stadlerparabolic,schneider}. This type of regularization favors~\textit{time-sparse} solutions~$\optu$ which satisfy~$\|\optu(t)\|_{L^2(\Omega)}=0$ on a  subset of the time interval of non-sero measure. For both problems we discuss the computation of the descent direction~$v^k$ and present sufficient structural assumptions implying growth conditions of the form~\eqref{eq:strenghtfirstorder}. The theoretical results are complemented and confirmed by numerical experiments. All computations were carried out in Matlab 2019 on a notebook with~$32$ GB RAM and an Intel\textregistered Core\texttrademark ~i7-10870H CPU@2.20 GHz
\subsection{Bang-bang-off control} \label{subsec:bangbang}
As a first example consider an elliptic control problem of the form
\begin{align} \label{bangbangnonred}
\min_{u \in L^2(\Omega),~y \in L^2(\Omega)} \left \lbrack \frac{1}{2} \|y-y_d\|^2_{L^2(\Omega)}+ \beta \|u\|_{L^1(\Omega)} \right \rbrack
\end{align}
where~$\Omega \subset \R^d$,~$d\in\{1,2,3\}$, is a bounded domain with Lipschitz continuous boundary,~$y_d \in L^2(\Omega)$,~$\beta \geq 0$, and the pair~$(u,y)$ satisfies the state equation
\begin{equation}
- \Delta y  = u+h
~\mbox{in } \Omega ,~y  =  0      ~ \mbox{on } \partial \Omega,
\label{statebangbang}
\end{equation}
~$h \in L^2(\Omega)$, as well as the control constraints
\begin{align*}
u \in \mathcal{U}_{ad} \coloneqq \left\{\,u\in L^\infty(\Omega)\;|\;u_a(x) \leq u(x) \leq u_b(x)\,\right\}
\end{align*}
with
\begin{align*}
u_a,u_b \in L^\infty(\Omega),~u_a(x) \leq 0 \leq u_b(x).
\end{align*}

In order to fit this problem into our setting~\eqref{def:problem}, we set~$\mathcal{U}=\mathcal{C}=L^2(\Omega)$ together with the canonical Hilbert space norm and formally eliminate the state equation.
\begin{proposition} \label{prop:reducedop}
There exists a linear and weak-to-strong continuous selfadjoint operator
\begin{align*}
K \colon L^2(\Omega) \to L^2(\Omega)
\end{align*}
such that~$y=Ku \in H^1_0(\Omega)$ satisfies
\begin{align} \label{eq:stategeneral}
- \Delta y  = u
~\mbox{in } \Omega ,~y  =  0      ~ \mbox{on } \partial \Omega.
\end{align}
in the weak sense.
Moreover there exists~$c>0$ with
\begin{align} \label{eq:l2byl1}
\norm{Ku}_{L^2(\Omega)}\leq c\norm{u}_{L^1(\Omega)}.
\end{align}
\end{proposition}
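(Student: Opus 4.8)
The plan is to identify $K$ with the solution operator of the homogeneous Dirichlet problem for $-\De$ and to check the asserted properties one after another. First I would fix $u \in L^2(\Om)$ and define $y = Ku \in \Heins$ as the unique weak solution of~\eqref{eq:stategeneral}, i.e. the unique $y \in \Heins$ with $\int_\Om \nabla y \cdot \nabla \varphi \, \de x = \scalarl{u}{\varphi}$ for all $\varphi \in \Heins$. Existence, uniqueness, linearity in $u$, and the a priori bound $\|y\|_{\Heins} \le c\,\normlzwo{u}$ follow from the Lax--Milgram lemma applied to the bounded, symmetric, coercive bilinear form $a(y,\varphi) = \int_\Om \nabla y \cdot \nabla \varphi \, \de x$, coercivity being a consequence of the Poincar\'e inequality on the bounded domain $\Om$. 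Composing with the continuous embedding $\Heins \hookrightarrow L^2(\Om)$ then produces a bounded linear operator $K \colon L^2(\Om) \to L^2(\Om)$.

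For the weak-to-strong continuity I would exploit that $\Heins \hookrightarrow L^2(\Om)$ is compact by the Rellich--Kondrachov theorem, so that $K$, being the composition of the bounded solution map $L^2(\Om) \to \Heins$ with this compact embedding, is itself a compact operator; compact operators map weakly convergent sequences to norm-convergent ones, which is exactly the desired property. For selfadjointness I would take $u_1, u_2 \in L^2(\Om)$, set $y_i = K u_i$, and test the weak formulation for $y_1$ with $\varphi = y_2$ and vice versa to obtain $\scalarl{K u_1}{u_2} = a(y_1,y_2) = \scalarl{u_1}{K u_2}$; thus $K$ is symmetric, and being bounded on the Hilbert space $L^2(\Om)$ it is selfadjoint.

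The estimate~\eqref{eq:l2byl1} I would derive by a duality argument, and this is the step I expect to be the main obstacle. Using selfadjointness of $K$ and H\"older's inequality,
\[
\normlzwo{Ku} = \sup_{\normlzwo{\varphi}\le 1} \scalarl{Ku}{\varphi} = \sup_{\normlzwo{\varphi}\le 1} \scalarl{u}{K\varphi} \le \norml{u} \, \sup_{\normlzwo{\varphi}\le 1} \|K\varphi\|_{L^\infty(\Om)},
\]
so the claim reduces to the regularity bound $\|K\varphi\|_{L^\infty(\Om)} \le c\,\normlzwo{\varphi}$, i.e. to showing that $K$ maps $L^2(\Om)$ boundedly into $L^\infty(\Om)$.

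This is precisely where the dimensional restriction $d \in \{1,2,3\}$ enters: since then $2 > d/2$, the data $\varphi \in L^2(\Om)$ lies in $L^p(\Om)$ with $p > d/2$, and the Stampacchia $L^\infty$-estimate (equivalently, De~Giorgi--Nash--Moser theory) for weak solutions of $-\De y = \varphi$ on a bounded Lipschitz domain yields $\|y\|_{L^\infty(\Om)} \le c\,\normlzwo{\varphi}$. On a convex or smooth domain one could instead invoke $H^2$-regularity together with the Sobolev embedding $H^2(\Om) \hookrightarrow L^\infty(\Om)$, which is valid for $d \le 3$; the delicate point is that full $H^2$-regularity may fail on a merely Lipschitz domain, so the Stampacchia estimate is the cleanest tool. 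Granting this $L^2$-to-$L^\infty$ bound, the displayed chain of inequalities immediately gives~\eqref{eq:l2byl1}, completing the proof.
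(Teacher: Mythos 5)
Your proposal is correct and follows essentially the same route as the paper: the paper's proof simply invokes the Lax--Milgram lemma together with the compact embedding $H^1_0(\Omega)\hookrightarrow L^2(\Omega)$ for the existence and weak-to-strong continuity of $K$, and refers to an external reference for the estimate~\eqref{eq:l2byl1}, whose standard proof is exactly the duality-plus-Stampacchia argument you spell out. You have merely supplied the details (selfadjointness via symmetry of the bilinear form, and the $L^2$-to-$L^\infty$ bound valid for $d\le 3$) that the paper leaves to the cited literature.
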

\begin{proof}
The existence of~$K$ follows immediately from the Lax-Milgram-lemma together with~$H^1_0(\Omega)\hookrightarrow^c L^2(\Omega)$. The estimate in~\eqref{eq:l2byl1} can be proven analogously to~\cite[Proposition 2.8]{pieper15}.
\end{proof}
Consequently, we substitute~$y=Ku$ and consider the reduced problem
\begin{align} \label{def:problembangbangoff}
\min_{u \in\mathcal{U}_{ad}} j(u) \coloneqq \left \lbrack \frac{1}{2} \|Ku-\widehat{y}_d\|^2_{L^2(\Omega)}+ \beta \|u\|_{L^1(\Omega)} \right \rbrack, \tag{$\mathcal{P}_1$}
\end{align}
where~$\widehat{y}_d=y_d-Kh$. In order to prove that this represents a particular instance of~\eqref{def:problem} it will be convenient to introduce the corresponding adjoint equation
\begin{align} \label{eq:adjointgeneral}
- \Delta p  = y-\widehat{y}_d
~\mbox{in } \Omega ,~y  =  0      ~ \mbox{on } \partial \Omega.
\end{align}
\begin{lemma}\label{le5.2}
The functions
\begin{align*}
f(u)=\frac{1}{2} \|Ku-y_d\|^2_{L^2(\Omega)},~g(u)=\beta \|u\|_{L^1(\Omega)}+I_{\mathcal{U}_{ad}}(u)
\end{align*}
satisfy Assumptions~\ref{ass:functions} and~\ref{ass:lipschitz} with
\begin{align*}
\unorm{u}=\norm{u}_{L^2(\Omega)},~\stnorm{u}=\norm{u}_{L^1(\Omega)}
\end{align*}
as well as~$\nabla f(u)=p $ where~$p \in H^1_0(\Omega)$ satisfies~\eqref{eq:adjointgeneral} in the weak sense for~$y=Ku$.
\end{lemma}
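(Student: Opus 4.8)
The goal is to verify that the concrete data $f(u)=\tfrac12\|Ku-\widehat{y}_d\|^2_{L^2(\Omega)}$ and $g(u)=\beta\|u\|_{L^1(\Omega)}+I_{\mathcal{U}_{ad}}(u)$ fit the abstract framework, so the proof is a matter of checking each bulleted condition in Assumptions~\ref{ass:functions} and~\ref{ass:lipschitz} in turn, with the identifications $\mathcal{U}=\mathcal{C}=L^2(\Omega)$, $\unorm{\cdot}=\norm{\cdot}_{L^2(\Omega)}$, and $\stnorm{\cdot}=\norm{\cdot}_{L^1(\Omega)}$. Note first that with $\mathcal{U}=L^2(\Omega)$ the weak* topology coincides with the usual weak topology, since $L^2(\Omega)$ is reflexive and self-dual; this lets me invoke weak-to-strong continuity of $K$ from Proposition~\ref{prop:reducedop} directly wherever weak* continuity is demanded.

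\textbf{Verifying A1--A3 (the function $f$ and $g=f+g$).}
For $\mathbf{A1}$, convexity of $f$ is immediate since it is a composition of the linear map $K$ with the (convex) squared norm. Differentiability: I compute the G\^{a}teaux derivative by expanding the square, obtaining $f'(u)(\delta u)=(Ku-\widehat{y}_d,K\delta u)_{L^2(\Omega)}=(K(Ku-\widehat{y}_d),\delta u)_{L^2(\Omega)}$ using self-adjointness of $K$, so $\nabla f(u)=K(Ku-\widehat{y}_d)$, which is exactly the adjoint state $p$ solving~\eqref{eq:adjointgeneral} with $y=Ku$. The required weak*-to-strong continuity of $\nabla f$ follows from the weak-to-strong continuity of $K$: if $u_k\weak \optu$ then $Ku_k\to K\optu$ strongly, hence $Ku_k-\widehat{y}_d\to K\optu-\widehat{y}_d$ strongly, and applying the (weak-to-strong, in particular strong-to-strong) continuous $K$ once more gives $\nabla f(u_k)\to\nabla f(\optu)$ strongly in $L^2(\Omega)=\mathcal{C}$. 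Weak* continuity of $f$ itself follows from the same strong convergence $Ku_k\to K\optu$. For $\mathbf{A2}$, $g$ is proper (it is finite at $u=0$ since $0\in\mathcal{U}_{ad}$), convex as a sum of the convex $L^1$-norm and a convex indicator, and weak* lower semicontinuous because $\norm{\cdot}_{L^1(\Omega)}$ is weakly lower semicontinuous on $L^2(\Omega)$ and $\mathcal{U}_{ad}$ is convex and closed, hence weakly closed. Coercivity is the crucial point: I must show $g(u)/\norm{u}_{L^2(\Omega)}\to+\infty$. The indicator constraint $u\in\mathcal{U}_{ad}$ forces $|u(x)|\le\max\{-u_a(x),u_b(x)\}$ pointwise a.e., so on $\mathcal{U}_{ad}$ one has the interpolation-type bound $\norm{u}_{L^2}^2\le \norm{u}_{L^\infty}\norm{u}_{L^1}$ with $\norm{u}_{L^\infty}$ bounded by the fixed function $\max\{\norm{u_a}_{L^\infty},\norm{u_b}_{L^\infty}\}=:C_\infty$; thus $g(u)\ge\beta\norm{u}_{L^1}\ge (\beta/C_\infty)\norm{u}_{L^2}^2$ on $\mathcal{U}_{ad}$, while off $\mathcal{U}_{ad}$ we have $g=+\infty$, and either way $g(u)/\norm{u}_{L^2}\to+\infty$ (for $\beta>0$; if $\beta=0$ the constraint set $\mathcal{U}_{ad}$ is $L^2$-bounded so the implication holds vacuously). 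Finally $\mathbf{A3}$ holds because $j\ge 0$ is bounded below and radial unboundedness is inherited from the coercivity of $g$.

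\textbf{Verifying A4--A5 (the Lipschitz-like condition).}
For $\mathbf{A4}$ I must check that $\unorm{\cdot}=\norm{\cdot}_{L^2}$-boundedness implies $\stnorm{\cdot}=\norm{\cdot}_{L^1}$-boundedness; but on the bounded domain $\Omega$ the embedding $L^2(\Omega)\hookrightarrow L^1(\Omega)$ gives $\norm{u}_{L^1}\le |\Omega|^{1/2}\norm{u}_{L^2}$, so this is trivial. For $\mathbf{A5}$, I estimate, using $\nabla f(u_1)-\nabla f(u_2)=K^2(u_1-u_2)$ and self-adjointness, $\langle\nabla f(u_1)-\nabla f(u_2),u_3-u_4\rangle=(K(u_1-u_2),K(u_3-u_4))_{L^2(\Omega)}$, and then bound $|(K(u_1-u_2),K(u_3-u_4))_{L^2}|\le\norm{K(u_1-u_2)}_{L^2}\norm{K(u_3-u_4)}_{L^2}\le c^2\norm{u_1-u_2}_{L^1}\norm{u_3-u_4}_{L^1}$ by two applications of~\eqref{eq:l2byl1}. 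This yields exactly the required bilinear bound with constant $L_u=c^2$, uniform over all arguments and in particular independent of the sublevel set.

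\textbf{Main obstacle.}
The routine steps (convexity, lower semicontinuity, the derivative formula, A4, and A5) are straightforward; the one place needing genuine care is the coercivity in $\mathbf{A2}$. The subtlety is that the $L^1$-penalty alone does \emph{not} dominate the $L^2$-norm in general, so coercivity in the $L^2=\mathcal{U}$-norm is not automatic and must be extracted from the pointwise $L^\infty$-bound built into the constraint set $\mathcal{U}_{ad}$ via the interpolation inequality $\norm{u}_{L^2}^2\le\norm{u}_{L^\infty}\norm{u}_{L^1}$. I would flag the borderline case $\beta=0$ explicitly, where coercivity holds because $\mathcal{U}_{ad}$ is itself $L^2$-bounded and the sublevel sets are contained in it.
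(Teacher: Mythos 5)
Your proposal is correct and follows essentially the same route as the paper: $\nabla f(u)=K(Ku-\widehat{y}_d)$ with weak-to-strong continuity and selfadjointness of $K$ handling $\mathbf{A1}$--$\mathbf{A3}$, the embedding $L^2(\Omega)\hookrightarrow L^1(\Omega)$ giving $\mathbf{A4}$, and two applications of the estimate \eqref{eq:l2byl1} giving $\mathbf{A5}$ with the uniform constant $c^2$. The only difference is that you spell out the coercivity in $\mathbf{A2}$ via an interpolation inequality, which the paper leaves implicit; note this is actually vacuous for every $\beta\geq 0$, since $\mathcal{U}_{ad}$ is $L^\infty$-bounded on the bounded domain $\Omega$ and hence $L^2$-bounded, so $g=+\infty$ along any sequence with $\unorm{u}\to\infty$.
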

\begin{proof}
Assumption~\ref{ass:functions} is readily verified noting that~$K$ is selfadjoint and weak-to-strong continuous, see Proposition~\ref{prop:reducedop}, as well as $\nabla f(u)=K^*(Ku-\widehat{y}_d)$.
Finally every sequence which is bounded in~$L^2(\Omega)$ is also bounded in~$L^1(\Omega)$ and, see~\eqref{eq:l2byl1}, we have
\begin{align*}
(\nabla f(u_1)-\nabla f(u_2),u_3-u_4)_{L^2(\Omega)}&= (K(u_1-u_2),K(u_3-u_4))_{L^2(\Omega)} 
\\ &\leq c^2 \norm{u_1-u_2}_{L^1(\Omega)} \norm{u_3-u_4}_{L^1(\Omega)}
\end{align*}
for all~$u_1,u_2,u_3,u_4 \in L^2(\Omega)$.
\end{proof}
Problems of the form~\eqref{def:problembangbangoff} are appealing for applications such as optimal actuator placement,~\cite{stadler}, since they are known to favor solutions with \textit{bang-bang-off} structure i.e.~$\optu$ is zero on some parts of the domain and achieves the control constraints on the rest of~$\Omega$. These properties can be deduced from the first order optimality conditions.
\begin{proposition} \label{prop:optimalitybangbangoff}
Problem~\eqref{def:problembangbangoff} admits a unique minimizer~$\optu \in \mathcal{U}_{ad}$.
Let~$\bar{p}$ denote the solution of~\eqref{eq:adjointgeneral} for~$y=K\optu$. Then we have
\begin{align*}
\bar{u} \in \begin{cases}
\{u_a\} (x) & \, \bar{p}(x) > \beta \\
\lbrack u_a(x),0 \rbrack & \, \bar{p}(x) = \beta \\
\{u_b(x)\} &\, \bar{p}(x) < -\beta \\
\lbrack 0, u_b(x) \rbrack & \, \bar{p}(x) =- \beta \\
\{0 \} & \,- \beta < \bar{p}(x) < \beta \\
\end{cases}.
\end{align*}
\end{proposition}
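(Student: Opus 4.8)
The plan is to first settle existence and uniqueness and then to read off the pointwise structure of the minimizer from the first order optimality condition~\eqref{eq:subdiff}.

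\textbf{Existence and uniqueness.} Existence of a minimizer follows at once from Proposition~\ref{prop:existence} together with Lemma~\ref{le5.2}. For uniqueness I would show that $f$ is \emph{strictly} convex. The operator $K$ is injective: if $Ku=0$, then $y=Ku$ solves $-\Delta y = u$ with $y=0$ on $\partial\Omega$, forcing $u=0$. Since $t\mapsto \tfrac12\normlzwo{t}^2$ is strictly convex and $u\mapsto Ku-\widehat{y}_d$ is injective and affine, the composition $f(u)=\tfrac12\normlzwo{Ku-\widehat{y}_d}^2$ is strictly convex. Adding the convex functional $g$ preserves strict convexity of $j=f+g$, so the minimizer $\optu$ is unique.

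\textbf{Reformulation via the adjoint.} By Lemma~\ref{le5.2} we have $\nabla f(\optu)=\bar p$, where $\bar p$ is the adjoint state solving~\eqref{eq:adjointgeneral} for $y=K\optu$. Hence, by the first order conditions of Theorem~\ref{thm:optimality}, $\optu$ is the minimizer if and only if~\eqref{eq:subdiff} holds, i.e.
\begin{align*}
\int_\Omega \bar p\,(\optu - u)\,\mathrm{d}x + g(\optu) \leq g(u) \quad \forall u \in \mathcal{U},
\end{align*}
which is precisely the subdifferential inclusion $-\bar p \in \partial g(\optu)$.

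\textbf{Pointwise reduction and inversion.} Here $g$ is an integral functional $g(u)=\int_\Omega \varphi(x,u(x))\,\mathrm{d}x$ with the normal convex integrand $\varphi(x,t)=\beta\abs{t}+I_{[u_a(x),u_b(x)]}(t)$. Consequently the inclusion $-\bar p\in\partial g(\optu)$ is equivalent to the pointwise condition $-\bar p(x)\in\partial_t\varphi(x,\optu(x))$ for almost every $x\in\Omega$. I would justify this reduction either by the theory of subdifferentials of integral functionals, or elementarily by testing~\eqref{eq:subdiff} with perturbations of $\optu$ supported on arbitrary measurable subsets of $\Omega$. A direct computation then gives $\partial_t\varphi(x,\cdot)$ on $[u_a(x),u_b(x)]$ as $\{-\beta\}$ on $(u_a(x),0)$, $\{\beta\}$ on $(0,u_b(x))$, $[-\beta,\beta]$ at $0$, $(-\infty,-\beta]$ at $u_a(x)$, and $[\beta,\infty)$ at $u_b(x)$. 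Inverting $-\bar p(x)\in\partial_t\varphi(x,\optu(x))$, that is, determining for which values of $\optu(x)$ a prescribed $\bar p(x)$ is admissible, produces exactly the five cases in the statement.

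\textbf{Main obstacle.} The only genuinely delicate point is the pointwise reduction of the global subdifferential inclusion to the almost-everywhere pointwise inclusion; the remaining convex computation and the inversion are routine. Minor additional care is required at the degenerate points where $u_a(x)=0$ or $u_b(x)=0$, but these are covered by the same scheme.
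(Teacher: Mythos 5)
Your proposal is correct and follows essentially the same route as the paper: uniqueness via injectivity of $K$ (hence strict convexity of $f$), then the first order condition~\eqref{eq:subdiff}, reduced almost everywhere to a one-dimensional convex problem in $\optu(x)$. The only cosmetic difference is that the paper phrases the pointwise step as computing $\argmin_{\mathbf{v}\in[u_a(x),u_b(x)]}\lbrack\bar p(x)\mathbf{v}+\beta|\mathbf{v}|\rbrack$ by case distinction, whereas you compute and invert the subdifferential $\partial_t\varphi(x,\cdot)$ — the same elementary computation in dual form — and both arguments hinge on the same a.e.\ localization step that you correctly identify as the only delicate point.
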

\begin{proof}
It is readily verified that the operator~$K$ from Proposition~\ref{prop:reducedop} is injective. Hence~$f$, and thus also~$j$, is strictly convex and the solution to~\eqref{def:problembangbangoff} is unique.
Since~$\optu$ solves~\eqref{def:problembangbangoff} it is also a minimizer to
\begin{align} \label{eq:linearizedbangbang}
\min_{v \in \mathcal{U}_{ad}} \left \lbrack (\bar{p},v)_{L^2(\Omega)}+ \beta \|v\|_{L^1(\Omega)} \right \rbrack= \min_{v \in \mathcal{U}_{ad}} \int_\Omega \bar{p}(x)v(x)+\beta |v(x)|~\mathrm{d}x.
\end{align}
see Theorem~\ref{thm:optimality}.
It is clear that minimizing this integral expression is equivalent to minimizing its integrand in an almost everywhere fashion i.e.~$\optu$ is a solution to~\eqref{eq:linearizedbangbang} if and only if
\begin{align*}
\optu(x) \in \argmin_{\mathbf{v} \in [u_a(x),u_b(x)]} \left \lbrack \bar{p}(x) \mathbf{v}+ \beta |\mathbf{v}| \right \rbrack
\end{align*}
holds for a.e.~$x \in \Omega$.
By a case distinction we arrive at
\begin{align*}
\argmin_{\mathbf{v} \in [u_a(x),u_b(x)]} \left \lbrack \bar{p}(x) \mathbf{v}+ \beta |\mathbf{v}| \right \rbrack= \begin{cases}
\{u_a\} (x) & \, \bar{p}(x) > \beta \\
\lbrack u_a(x),0 \rbrack & \, \bar{p}(x) = \beta \\
\{u_b(x)\} &\, \bar{p}(x) < -\beta \\
\lbrack 0, u_b(x) \rbrack & \, \bar{p}(x) =- \beta \\
\{0 \} & \,- \beta < \bar{p}(x) < \beta \\
\end{cases}
\end{align*}
finishing the proof.
\end{proof}
Hence if we have
\begin{align*}
\operatorname{meas} \left\{\,x \in \Omega:\;|\bar{p}(x)|=\beta \,\right\}=0
\end{align*}
then~$\optu(x)\in \{u_a(x),0, u_b(x)\}$ for a.e.~$x\in \Omega$. In this context, it is also already well-known that conditions of the form~\eqref{eq:strenghtfirstorder} are tightly connected to the behavior of~$\bar{p}$ in the vicinity of~$\{\,x \in \Omega\;|\;|\bar{p}(x)|=\beta\,\}$, see e.g.~\cite{Hinze,wachsmuth2}.
\begin{assumption} \label{ass:bangbang}
There is~$C>0$ and~$\kappa \in (0,1]$ such that
\begin{align*}
\operatorname{meas} \left\{\,x \in \Omega:\;\big |\;|\bar{p}(x)|-\beta\, \big | \leq \eps \,\right\} \leq C \eps^{\kappa}
\end{align*}
for all~$\eps>0$.
\end{assumption}
\begin{proposition}
Let Assumption~\ref{ass:bangbang} hold. Then there exists~$\theta>0$ such that
\begin{align*}
(\bar{p},u-\optu)_{L^2(\Omega)}+ \beta \left( \norm{u}_{L^1(\Omega)}-\norm{\optu}_{L^1(\Omega)} \right) \geq \theta \, \norm{u-\optu}^{1+1/\kappa}_{L^1(\Omega)} \quad \forall u \in \mathcal{U}_{ad}
\end{align*}
i.e. Assumption~\ref{ass:fastconv} is fulfilled with~$q=1+1/\kappa$.
\end{proposition}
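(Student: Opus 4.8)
The plan is to reduce the estimate to a pointwise bound on the integrand and then trade the vanishing of that bound near $\{|\bar{p}|=\beta\}$ against the measure estimate of Assumption~\ref{ass:bangbang}, which is exactly what produces the exponent $q=1+1/\kappa$. Writing the inner product and the $L^1$-norms as integrals, the left-hand side equals $\int_\Omega D(x)\,\mathrm{d}x$ with
\[
D(x)=\bar{p}(x)\bigl(u(x)-\optu(x)\bigr)+\beta\bigl(|u(x)|-|\optu(x)|\bigr).
\]
The key first step is to show that, with $d(x):=\bigl||\bar{p}(x)|-\beta\bigr|$,
\[
D(x)\ \ge\ d(x)\,\bigl|u(x)-\optu(x)\bigr|\qquad\text{for a.e. }x\in\Omega .
\]
Setting $\phi_x(v)=\bar{p}(x)v+\beta|v|$, Proposition~\ref{prop:optimalitybangbangoff} tells us that $\optu(x)$ minimizes the convex function $\phi_x$ over $[u_a(x),u_b(x)]$, so that $D(x)=\phi_x(u(x))-\phi_x(\optu(x))\ge 0$. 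Since $\phi_x$ is piecewise affine with one-sided slopes $\bar{p}(x)\pm\beta$, a short case distinction following the five regimes of Proposition~\ref{prop:optimalitybangbangoff} shows that the growth of $\phi_x$ away from $\optu(x)$ in every admissible direction is bounded below by $d(x)$, i.e. $\phi_x(v)-\phi_x(\optu(x))\ge d(x)\,|v-\optu(x)|$ for all $v\in[u_a(x),u_b(x)]$; on the two null-coefficient sets $\{|\bar{p}|=\beta\}$ this reduces to the trivial $D\ge 0$.

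Writing $w(x)=|u(x)-\optu(x)|$, it then remains to prove the scalar inequality $\int_\Omega d\,w\,\mathrm{d}x\ge \theta\,\bigl(\int_\Omega w\,\mathrm{d}x\bigr)^{1+1/\kappa}$. The second step uses that $u,\optu\in\mathcal{U}_{ad}$, so that $0\le w\le R:=\|u_b-u_a\|_{L^\infty(\Omega)}$ almost everywhere. For any threshold $t>0$ put $A_t=\{\,x:d(x)\le t\,\}$ and estimate $\int_\Omega d\,w\ge t\int_{\Omega\setminus A_t} w\ge t\bigl(\|w\|_{L^1(\Omega)}-R\,\operatorname{meas}(A_t)\bigr)$. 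Applying Assumption~\ref{ass:bangbang} with $\eps=t$ gives $\operatorname{meas}(A_t)=\operatorname{meas}\{\,d\le t\,\}\le C\,t^{\kappa}$, whence
\[
\int_\Omega d\,w\,\mathrm{d}x\ \ge\ t\,\|w\|_{L^1(\Omega)}-RC\,t^{1+\kappa}.
\]

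The final step is to optimize this lower bound over $t>0$. The right-hand side is concave in $t$, and choosing $t=\bigl(\|w\|_{L^1(\Omega)}/(RC(1+\kappa))\bigr)^{1/\kappa}$ yields, after a direct computation, the value $\theta\,\|w\|_{L^1(\Omega)}^{1+1/\kappa}$ with $\theta=\tfrac{\kappa}{1+\kappa}\bigl(RC(1+\kappa)\bigr)^{-1/\kappa}>0$. Since $\|w\|_{L^1(\Omega)}=\|u-\optu\|_{L^1(\Omega)}$ and $1+1/\kappa=q$, this is precisely the asserted growth estimate, so Assumption~\ref{ass:fastconv}~\textbf{B2} holds with this $q$ and $\theta$. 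I expect the genuinely delicate point to be the pointwise inequality of the first step: one must verify that the lower bound on the growth rate of $\phi_x$ is uniformly the sharp coefficient $\bigl||\bar{p}|-\beta\bigr|$ across all five optimality regimes (in particular in the interior case $\optu(x)=0$, where the flat supporting line is not enough and one must use the one-sided slopes $\bar p(x)\pm\beta$ directly), since it is exactly this coefficient that couples to the measure condition and thereby fixes the exponent.
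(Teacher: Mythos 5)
Your proof is correct and complete. The paper itself does not prove this proposition --- it simply cites Lemma~9.4.2 of Poerner's thesis --- so your argument is a genuine self-contained derivation rather than a reproduction of the paper's reasoning. Your two ingredients are sound: the pointwise bound $D(x)\ge \bigl||\bar p(x)|-\beta\bigr|\,|u(x)-\optu(x)|$ does hold in all five regimes of Proposition~\ref{prop:optimalitybangbangoff} (including the interior case $\optu(x)=0$, where $\bar p(x)v+\beta|v|\ge(\beta-|\bar p(x)|)|v|$, and the boundary cases, where the cross terms such as $2\beta v\ge0$ close the estimate), and the threshold step $\int_\Omega d\,w\ge t\|w\|_{L^1}-RC\,t^{1+\kappa}$ followed by optimization in $t$ is exactly the mechanism that converts the measure condition of Assumption~\ref{ass:bangbang} into the exponent $1+1/\kappa$, with the stated value of $\theta$ checking out. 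It is worth noting that your organization is somewhat cleaner than what the paper does for the analogous parabolic result (Proposition~\ref{prop:quadgrowthfordirecsparse}): there the authors split $I$ into active and inactive sets, treat each with a separately chosen $\eps$, and need an additional uniform-convexity lemma because the constraint is a ball in $L^2(\Omega)$ rather than a box; in the scalar bang-bang setting your single pointwise inequality absorbs both the active and inactive contributions at once, and the only threshold choice needed is the final optimization over $t$. The only cosmetic caveats are the degenerate cases $\|u-\optu\|_{L^1(\Omega)}=0$ or $u_a\equiv u_b$, where the inequality is trivial and your formula for $t^*$ need not be invoked; a one-line remark would make the argument airtight.
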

\begin{proof}
For a proof see~\cite[Lemma~9.4.2]{Poerner}.
\end{proof}
Finally we address the choice of the Frank-Wolfe descent direction in Algorithm~\ref{alg:gcg}. We observe that~$v^k$ can always be chosen to satisfy the bang-bang-off condition~$v^k(x) \in \{u_a(x),0,u_b(x)\}$ for a.e.~$x\in\Omega$. Moreover it can be characterized  analytically on the basis of  the current adjoint state. As a consequence, the computational cost of step 1. in Algorithm~\ref{alg:gcg} reduces to two elliptic PDE solves for this example.
\begin{lemma}
Let~$u \in \mathcal{U}_{ad}$ be arbitrary and let~$p$ denote the solution of~\eqref{eq:adjointgeneral} with~$y=Ku$. Moreover set
\begin{align*}
\bar{v}= \begin{cases}
u_a (x) & \, p(x) \geq \beta \\
u_b(x) &\, p(x) \leq -\beta \\
0 & \, \text{else}
\end{cases}
\end{align*}
for a.e.~$x\in \Omega$.
Then~$\bar{v} \in \mathcal{U}_{ad}$ is a minimizer of
\begin{align*}
\min_{v \in \mathcal{U}_{ad}} \left \lbrack (p,v)_{L^2(\Omega)}+ \beta \|v\|_{L^1(\Omega)} \right \rbrack.
\end{align*}
\end{lemma}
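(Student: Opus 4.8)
The plan is to exploit the fact that minimizing the integral functional $\int_\Omega \left\lbrack p(x)v(x)+\beta|v(x)|\right\rbrack~\mathrm{d}x$ over $v\in\mathcal{U}_{ad}$ decouples into a pointwise minimization, exactly as in the proof of Proposition~\ref{prop:optimalitybangbangoff}. Since $\mathcal{U}_{ad}$ is defined by the pointwise box constraints $u_a(x)\leq v(x)\leq u_b(x)$, and the integrand at each point $x$ depends only on the value $\mathbf{v}=v(x)$, I would first argue that $\bar{v}$ is a minimizer if and only if, for almost every $x\in\Omega$, the value $\bar{v}(x)$ minimizes the scalar function
\begin{align*}
\phi_x(\mathbf{v})=p(x)\mathbf{v}+\beta|\mathbf{v}|,\quad \mathbf{v}\in[u_a(x),u_b(x)].
\end{align*}
This reduction is routine measurability/pointwise-infimum reasoning and mirrors the earlier proposition, so I would state it briefly and move on.

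Second, I would carry out the pointwise case distinction to verify that the proposed $\bar{v}$ indeed attains the minimum of $\phi_x$ on $[u_a(x),u_b(x)]$ for a.e.~$x$. The scalar function $\phi_x$ is convex and piecewise linear in $\mathbf{v}$, with slope $p(x)+\beta$ for $\mathbf{v}>0$ and $p(x)-\beta$ for $\mathbf{v}<0$. The relevant cases are: if $p(x)\geq\beta$ then $\phi_x$ is nondecreasing on the positive axis and the combined slope makes the left endpoint optimal, giving $\bar{v}(x)=u_a(x)$ (recall $u_a(x)\leq 0$); if $p(x)\leq-\beta$ then symmetrically $\bar{v}(x)=u_b(x)$ (recall $u_b(x)\geq 0$); and if $-\beta<p(x)<\beta$ then $\phi_x$ is strictly increasing for $\mathbf{v}>0$ and strictly decreasing for $\mathbf{v}<0$, so the minimum over the interval containing $0$ is attained at $\mathbf{v}=0$, giving $\bar{v}(x)=0$. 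In each case the proposed value lies in $[u_a(x),u_b(x)]$, so $\bar{v}\in\mathcal{U}_{ad}$, and it is a pointwise minimizer.

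The main subtlety, rather than an obstacle, is reconciling the boundary cases $p(x)=\beta$ and $p(x)=-\beta$ with the formula for $\bar{v}$. At $p(x)=\beta$ the whole segment $[u_a(x),0]$ is optimal, and the formula's choice $\bar{v}(x)=u_a(x)$ is a valid selection from this set; likewise $\bar{v}(x)=u_b(x)$ is valid at $p(x)=-\beta$. I would emphasize that the lemma only claims $\bar{v}$ \emph{is a} minimizer, not that it is the unique one, so selecting any admissible optimal value on these measure-possibly-positive level sets is legitimate. Collecting the pointwise statements and integrating then yields that $\bar{v}$ minimizes the integral functional over $\mathcal{U}_{ad}$, which is precisely $(\mathcal{P}_{\text{lin}}(u))$ for the data of Problem~\eqref{def:problembangbangoff}, completing the proof.
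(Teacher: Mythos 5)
Your proposal is correct and follows essentially the same route as the paper: the paper's proof simply refers back to Proposition~\ref{prop:optimalitybangbangoff}, whose argument is exactly your reduction of the integral functional to an a.e.\ pointwise minimization of $\mathbf{v}\mapsto p(x)\mathbf{v}+\beta|\mathbf{v}|$ over $[u_a(x),u_b(x)]$ followed by a case distinction on the sign and magnitude of $p(x)$. Your additional care with the boundary cases $p(x)=\pm\beta$ (where $\bar v$ is one valid selection from a segment of minimizers) is a correct and welcome elaboration of a point the paper leaves implicit.
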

\begin{proof}
The proof can be carried out along the lines of~Lemma~\ref{prop:optimalitybangbangoff}. 
\end{proof}

The section is finished by two numerical examples. State and adjoint equation are discretized by linear finite elements on a uniform triangulation of~$\Omega=[0,1]^2$ with gridsize~$h=1/256$. The set of admisible controls~$\mathcal{U}_{ad}$ is approximated by piecewise constant functions on the same mesh. For each example, Algorithm~\ref{alg:gcg} is run for a maximum of~$1000$ iterations or until~$\Psi(u^K)\leq 10^{-10}$ for some~$K \in \N$. In the latter case, Theorem~\ref{thm:optimality} also guarantees~$r_j(u^K)\leq 10^{-10}$. Since exact optimal solutions are not available we approximate~$\optu \approx u^K$ for the evaluation of all quantities involving~$\optu$. The Armijo parameters are chosen as~$\alpha=0.5$ and~$\gamma=0.99$.
\begin{example}
\label{bangbang1}[Example 1,~\cite{stadler}]
In the first example we set~$u_a(x) \equiv -30$,~$u_b(x) \equiv 30$ and
\begin{align*}
y_d(x)=\sin (2\pi x_1)\sin(2 \pi x_2) \exp(2x_1)/6,~h(x) \equiv 0,~\beta=0.001.
\end{align*}
\end{example}
\begin{example}[Example 3,~\cite{stadler}]
\label{bangbang2}
In this example we consider spatially varying control constraints and set
\begin{align*}
u_a(x) \equiv -10,~u_b(x)= \begin{cases} 0 & \, (x_1,x_2) \in [0,1/4] \times [0,1]  \\
-5+20x_1 & \, (x_1,x_2) \in [0,1/4] \times [0,1].
\end{cases}
\end{align*}
The desired state is specified to be
\begin{align*}
y_d(x)= \sin(4\pi x_1)\cos(8\pi x_2) \exp (2x_1),~h(x)=10 \cos(8\pi x_1)\sin(8\pi x_2),
\end{align*}
and~$\beta=0.002$. 
\end{example}
\begin{figure}[htb]
\begin{subfigure}[t]{.48\linewidth}
\centering
\includegraphics[scale=0.5]{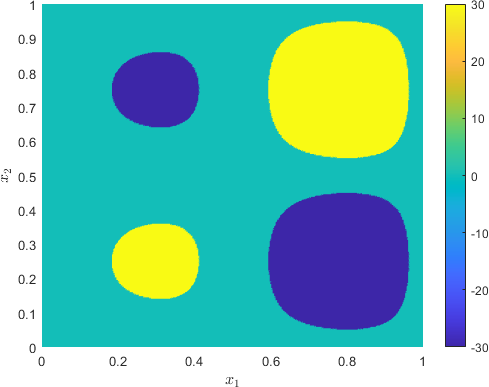}
\caption{Optimal solution~$\optu$.}
\label{fig:solbang1}
\end{subfigure}
\quad
\begin{subfigure}[t]{.48\linewidth}
\centering
\includegraphics[scale=0.5]{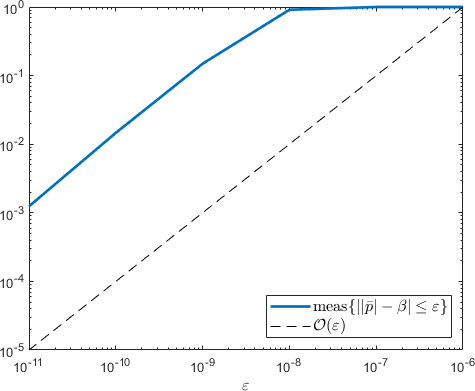}
\caption{Verification of Assumption~\ref{ass:bangbang}.}
\label{fig:measbang1}
\end{subfigure}
\begin{subfigure}[t]{.48\linewidth}
\centering
\includegraphics[scale=0.5]{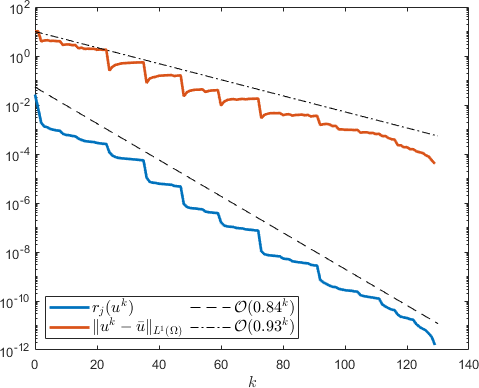}
\caption{Convergence of residuals and iterates.}
\label{fig:convbang1}
\end{subfigure}
\quad
\begin{subfigure}[t]{.48\linewidth}
\centering
\includegraphics[scale=0.5]{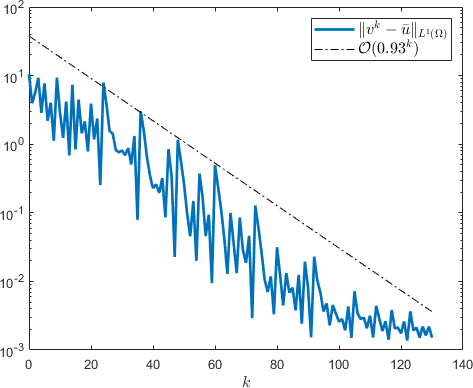}
\caption{Convergence of descent directions.}
\label{fig:convvbang1}
\end{subfigure}
%\quad
%\begin{subfigure}[t]{.31\linewidth}
%\centering
%\includegraphics[scale=0.38]{hessian.pdf}
%\caption{Second derivative~$\bar{p}''$.}
%\label{fig:hessian}
%\end{subfigure}
\caption{Optimal solution~$\optu$ and convergence of relevant quantities for Example~\ref{bangbang1}.}
\label{fig:exbang1}
\end{figure}
\begin{figure}[htb]
\begin{subfigure}[t]{.48\linewidth}
\centering
\includegraphics[scale=0.5]{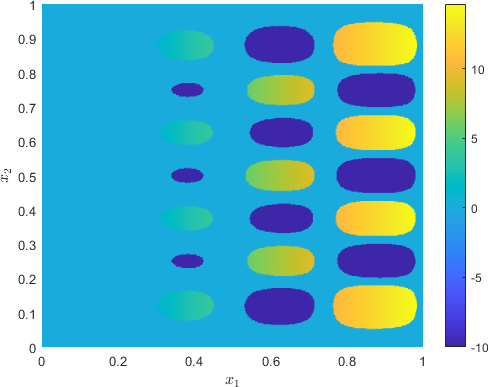}
\caption{Optimal solution~$\optu$.}
\label{fig:solbang2}
\end{subfigure}
\quad
\begin{subfigure}[t]{.48\linewidth}
\centering
\includegraphics[scale=0.5]{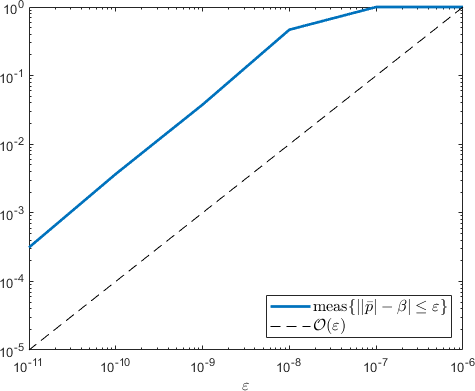}
\caption{Verification of Assumption~\ref{ass:bangbang}.}
\label{fig:measbang2}
\end{subfigure}
\begin{subfigure}[t]{.48\linewidth}
\centering
\includegraphics[scale=0.5]{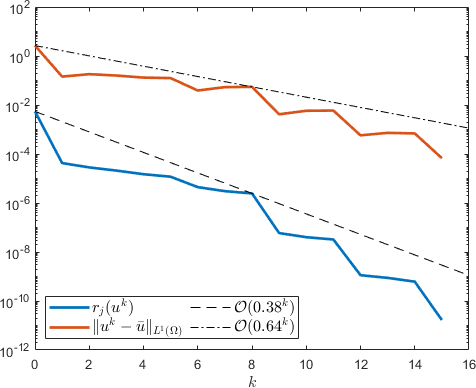}
\caption{Convergence of residuals and iterates.}
\label{fig:convbang2}
\end{subfigure}
\quad
\begin{subfigure}[t]{.48\linewidth}
\centering
\includegraphics[scale=0.5]{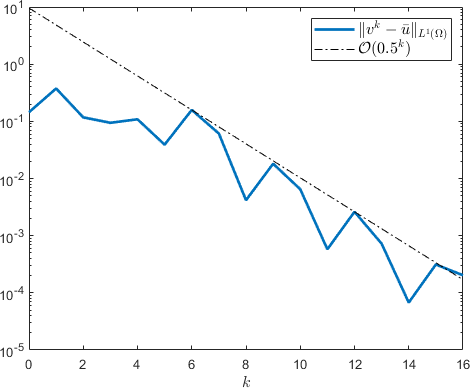}
\caption{Convergence of descent directions.}
\label{fig:convvbang2}
\end{subfigure}
\caption{Optimal solution~$\optu$ and convergence of relevant quantities for Example~\ref{bangbang2}.}
\label{fig:exbang2}
\end{figure}
The results are reported in Figure~\ref{fig:exbang1} and~\ref{fig:exbang2}. For both examples, Algorithm~\ref{alg:gcg} only requires a moderate number of iterations and stops since the termination criterion is fulfilled. The resulting optimal controls are shown in Figure~\ref{fig:solbang1} and~\ref{fig:solbang2}, respectively. In both cases,~$\optu$ satisfies the bang-bang-off condition~$\optu (x) \in \{u_a(x),0,u_b(x)\}$ for almost every~$x \in \Omega$. Moreover, numerical verification, see Figure~\ref{fig:measbang1} and~\ref{fig:measbang2}, suggests that Assumption~\ref{ass:bangbang} is fulfilled with~$\kappa=1$. In this case Theorem~\ref{thm:fastconvergence1} predicts a linear rate of convergence for all relevant quantities. These theoretical results are confirmed by the computations, see Figure~\ref{fig:convbang1} and~\ref{fig:convbang2}, for~$r_j(u^k)$ and~$u^k$, as well as Figure~\ref{fig:convvbang1} and~\ref{fig:convvbang2} for~$v^k$. Finally, while not mentioned in any of the plots, we also point out the  small computational times of~$140$s, for Example~\ref{bangbang1}, and~$11$s, for Example~\ref{bangbang2}, respectively. These are testament to the reasonable per-iteration complexity of the presented method. Together with the reported convergence rates, this underlines the practical utility of Algorithm~\ref{alg:gcg} for this type of problems.
\subsection{Directional sparsity} \label{subsec:direcsparse}
Second, we consider an optimal control problem for the heat equation with sparsity promoting term in the temporal direction. For this purpose, let~$\Omega$ be a bounded domain in $\mathbb{R}^d$,~$d=1,2,3$, with Lipschitz continuous boundary $\partial \Omega$ and~$I=[0,T]$ for some~$T>0$. The concrete minimization problem is given by
\begin{align} \label{eq:direcsparsenonred}
   \Pb \qquad  \min_{u,y}  \frac{1}{2}\int^T_0\|y(t) - y_d(t)\|_{L^2(\Omega)}^2\, dt + \alpha \int^T_0\|u(t)\|_{L^2(\Omega)}\, dt ,
\end{align}
where~$y_d \in L^2(I;L^2(\Omega))$,~$\alpha \geq 0$, and the pair~$(u,y)$ satisfies
\begin{equation}
\left\{
\begin{array}{rcll}
\displaystyle
\partial_t y & = & a \Delta y   + u &
\mbox{in } Q = (0,T) \times \Omega ,\\[1ex]
y & = & 0      & \mbox{on } \Sigma = (0,T)\times \Gamma,\\[1ex]
y(0,\cdot) & = & 0 & \mbox{in } \Omega,
\end{array}
\right.
\label{E1.1}
\end{equation}
~$a >0$, as well as the control constraints
\begin{align*}
u\in \mathcal{U}_{ad} \coloneqq \left\{\, u \in L^2(Q)\;|\;\norm{u(t)}_{L^2(\Omega)}\leq M\, \right\}
\end{align*}
for some~$M>0$. As in Section~\ref{subsec:bangbang},~\eqref{eq:direcsparsenonred} can be cast into a particular instance of the general setting~\eqref{def:problem} by introducing a weak-to-strong continuous and injective operator~$K \colon L^2(Q) \to L^2(Q) $ where~$y=Ku$ satisfies~\eqref{E1.1} in the weak sense. This again leads to a reduced formulation
\begin{align} \label{def:irecsparsered}
\min_{u \in \mathcal{U}_{ad}} j(u)\coloneqq \left \lbrack  \frac{1}{2}\int^T_0\|\lbrack Ku \rbrack (t) - y_d(t)\|_{L^2(\Omega)}^2\, dt + \alpha \int^T_0\|u(t)\|_{L^2(\Omega)}\, dt \right \rbrack \tag{$\mathcal{P}_2$}
\end{align}
%where $T>0, M\in(0,\infty), \Omega$ is a bounded domain in $\mathbb{R}^n$ with Lipschitz continuous boundary $\partial \Omega$ and other normal $\nu$, $y_d\in L^2(0,T;L^2(\Omega))$,  and $B\in L^2(\Omega)$. It is straightforward to argue the existence of a solution $\bar u$ to \Pb. It will be convenient to express $J$ as
%\begin{equation*}
%J(u)= F(u) + \alpha j(u) +I_{U_ad}(u),
%\end{equation*}
%where $F$ denotes the quadratic term in $J$, with $y=y(u)$ the solution to \eqref{E1.1}, further $j(u)= \int^T_0\|u(t)\|_{L^2(\Omega)}\, dt$, and $I_{U_ad}$ is the indicator function of the set $I_{U_ad}$. We shall also utilize the adjoint equation associated to problem \Pb which is given by
in the form of Problem~\eqref{def:problem} with~$\mathcal{U}=L^2(Q)$. In order to characterize the gradient of the tracking-type term it is also convenient to introduce the corresponding adjoint equation
\begin{equation}
\left\{
\begin{array}{rcll}
\displaystyle
-\partial_t p & = & a\Delta p  + y- y_d &
\mbox{in } Q = (0,T) \times \Omega ,\\[1ex]
p & = & 0      & \mbox{on } \Sigma = (0,T)\times \Gamma,\\[1ex]
p(T,\cdot) & = & 0& \mbox{in } \Omega,
\end{array}
\right.
\label{E2.1}
\end{equation}
\begin{lemma}
The functions
\begin{align*}
f(u)=\frac{1}{2}\int^T_0\|\lbrack Ku \rbrack (t) - y_d(t)\|_{L^2(\Omega)}^2\, dt,~g(u)=\alpha \int^T_0\|u(t)\|_{L^2(\Omega)}\, dt  + I_{\mathcal{U}_{ad}}
\end{align*}
satisfy Assumptions~\ref{ass:functions} and~\ref{ass:lipschitz} with
\begin{align*}
\unorm{u}= \norm{u}_{L^2(Q)},~\stnorm{u}=\norm{u}_{L^1(I;L^2(\Omega))}=\int^T_0\|u(t)\|_{L^2(\Omega)}\, dt.
\end{align*}
as well as~$\nabla f=p$ where~$p$ is the weak solution of \eqref{E2.1}  with~$y=Ku$.
\end{lemma}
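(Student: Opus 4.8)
The plan is to follow the verification carried out for the bang-bang-off problem in Lemma~\ref{le5.2} almost verbatim, the only genuinely new ingredient being a smoothing estimate for the parabolic solution operator $K$ that plays the role of~\eqref{eq:l2byl1}. Since $\mathcal{U}=\mathcal{C}=L^2(Q)$ is a Hilbert space, weak and weak* convergence coincide throughout. I would first record the gradient: the chain rule applied to $f(u)=\frac{1}{2}\norm{Ku-y_d}_{L^2(Q)}^2$ gives $f'(u)(\delta u)=(Ku-y_d,K\delta u)_{L^2(Q)}=(K^*(Ku-y_d),\delta u)_{L^2(Q)}$, so that $\nabla f(u)=K^*(Ku-y_d)$. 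To identify this with the adjoint state $p$ of~\eqref{E2.1}, I would test the forward equation~\eqref{E1.1} for the state $y=Kw$ against $p$ and integrate by parts in space and time; the conditions $y(0)=0$, $p(T)=0$ and the homogeneous boundary data make all boundary terms vanish and give $(Kw,\phi)_{L^2(Q)}=(w,p)_{L^2(Q)}$ for every $w$, where $p$ solves~\eqref{E2.1} with source $\phi=Ku-y_d$. Hence $K^*(Ku-y_d)=p$ and $\nabla f(u)=p$, as claimed.

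With $\nabla f$ in hand, Assumption~\ref{ass:functions}~$\mathbf{A1}$ is immediate: $f$ is a convex quadratic in $Ku$, and the weak-to-strong continuity of $K$, composed with the bounded operator $K^*$, transfers to both $f$ and $\nabla f$. For $\mathbf{A2}$ I note that $g$ is the sum of the convex, strongly continuous—hence weakly lower semicontinuous—functional $u\mapsto\alpha\int_0^T\norm{u(t)}_{L^2(\Omega)}\,\mathrm{d}t$ and the indicator of the convex, strongly closed (thus weakly closed) set $\mathcal{U}_{ad}$, and $0\in\mathcal{U}_{ad}$ gives properness. The decisive simplification is that the constraint $\norm{u(t)}_{L^2(\Omega)}\le M$ a.e.\ forces $\norm{u}_{L^2(Q)}\le M\sqrt{T}$, so $\mathcal{U}_{ad}$ is bounded in $L^2(Q)$; this makes $g$ coercive and $j=f+g$ radially unbounded at once, while $f,g\ge 0$ yields $\inf j\ge 0$, giving $\mathbf{A3}$. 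Finally, $\mathbf{A4}$ is the Hölder embedding $\stnorm{u}\le\sqrt{T}\,\unorm{u}$ on the finite interval $I$.

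The only step requiring real work—and the main obstacle—is $\mathbf{A5}$. Writing $\nabla f(u_1)-\nabla f(u_2)=K^*K(u_1-u_2)$, I obtain
\[
\langle\nabla f(u_1)-\nabla f(u_2),u_3-u_4\rangle=(K(u_1-u_2),K(u_3-u_4))_{L^2(Q)}\le\norm{K(u_1-u_2)}_{L^2(Q)}\norm{K(u_3-u_4)}_{L^2(Q)},
\]
so it suffices to establish the parabolic counterpart of~\eqref{eq:l2byl1}, namely $\norm{Kw}_{L^2(Q)}\le c\,\stnorm{w}$ for all $w\in L^2(Q)$. I would prove this by the elementary energy estimate for~\eqref{E1.1}: testing with $y=Kw$ gives $\frac{1}{2}\frac{d}{dt}\norm{y(t)}_{L^2(\Omega)}^2=-a\norm{\nabla y(t)}_{L^2(\Omega)}^2+(w(t),y(t))_{L^2(\Omega)}\le\norm{w(t)}_{L^2(\Omega)}\norm{y(t)}_{L^2(\Omega)}$, hence $\frac{d}{dt}\norm{y(t)}_{L^2(\Omega)}\le\norm{w(t)}_{L^2(\Omega)}$ and, using $y(0)=0$, $\norm{y(t)}_{L^2(\Omega)}\le\int_0^t\norm{w(s)}_{L^2(\Omega)}\,\mathrm{d}s\le\stnorm{w}$ for every $t\in I$. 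Integrating this uniform-in-time bound over $I$ yields $\norm{Kw}_{L^2(Q)}\le\sqrt{T}\,\stnorm{w}$, equivalently the Duhamel bound obtained from the $L^2(\Omega)$-contractivity of the heat semigroup in $[Kw](t)=\int_0^t e^{a(t-s)\Delta}w(s)\,\mathrm{d}s$. Consequently $\mathbf{A5}$ holds with the uniform constant $L_u=c^2=T$, independent of the sublevel set, which completes the verification; the only subtlety is to justify the energy identity at the regularity $w\in L^2(Q)$, which I would handle by a standard density or Galerkin argument.
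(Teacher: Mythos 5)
Your proposal is correct and follows essentially the same route as the paper: reduce everything to the argument of Lemma~\ref{le5.2}, identify $\nabla f(u)=K^*(Ku-y_d)$ with the adjoint state $p$ by integration by parts, and supply the parabolic analogue $\norm{Kw}_{L^2(Q)}\leq c\,\norm{w}_{L^1(I;L^2(\Omega))}$ of~\eqref{eq:l2byl1} to obtain~$\mathbf{A5}$. The only difference is that the paper cites this smoothing estimate from the literature, whereas you derive it directly from the energy identity for~\eqref{E1.1}, which is a perfectly valid (and self-contained) substitute.
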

\begin{proof}
This can be argued along the lines of Lemma \ref{le5.2} by noting that
\begin{align*}
\norm{Ku}_{L^2(Q)} \leq c \norm{u}_{L^1(I;L^2(\Omega))} \quad \forall u \in L^2(Q),
\end{align*}
for a constant $c$ independent of $u\in L^2(Q)$, see~\cite[pg.135, 143]{LSU}, and by characterizing the adjoint operator~$K^*$.
\end{proof}
The regularizer~$g$ is known to encourage~\textit{group-sparse} minimizers which satisfy. As for the bang-bang-off example this can be
\begin{proposition} \label{prop:optimalitydirecsparse}
There exists a unique solution~$\optu \in \mathcal{U}_{ad}$ to~\eqref{def:irecsparsered}. Moreover let~$\bar{p} \in L^2(Q) $ denote the weak solution of~\eqref{E2.1} for~$\bar{y}=K\optu$. Then there holds
\begin{align*}
\bar{u}(t) \in \begin{cases}
\left\{-M \frac{\bar{p}(t)}{\norm{p(t)}}\right\} & \, \norm{\bar{p}(t)} > \alpha \\
\left\{-m \frac{\bar{p}(t)}{\norm{p(t)}}\;|\;m\in[0,M]  \right\} & \, \norm{\bar{p}(t)} = \alpha \\
\{0\} &\, \norm{\bar{p}(t)} < \alpha
\end{cases}
\end{align*}
for a.e.~$t \in I$ where we abbreviate~$\norm{\cdot}=\norm{\cdot}_{L^2(\Omega)}$.
\end{proposition}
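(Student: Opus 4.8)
The plan is to follow verbatim the strategy of Proposition~\ref{prop:optimalitybangbangoff}. For \emph{uniqueness}, I would use that the operator~$K$ is injective, so that the tracking term~$f$ is strictly convex and hence~$j=f+g$ is strictly convex on~$\mathcal{U}_{ad}$; this forces the minimizer~$\optu$ to be unique. For the \emph{characterization}, Theorem~\ref{thm:optimality} guarantees that~$\optu$ is simultaneously a minimizer of the partially linearized problem~$(\mathcal{P}_{\text{lin}}(\optu))$. Since~$\nabla f(\optu)=\bar{p}$, this problem takes the form
\begin{equation*}
\min_{v \in \mathcal{U}_{ad}} \int_0^T \left[ (\bar{p}(t), v(t))_{L^2(\Omega)} + \alpha \norm{v(t)}_{L^2(\Omega)} \right] \de t .
\end{equation*}

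The key structural observation is that both the integrand and the admissibility constraint~$\norm{v(t)}_{L^2(\Omega)} \leq M$ decouple across~$t \in I$. Consequently the integral is minimized precisely when the integrand is minimized pointwise, i.e.\ $\optu$ solves~$(\mathcal{P}_{\text{lin}}(\optu))$ if and only if
\begin{equation*}
\optu(t) \in \argmin_{\mathbf{v} \in L^2(\Omega),\, \norm{\mathbf{v}}_{L^2(\Omega)} \leq M} \left[ (\bar{p}(t), \mathbf{v})_{L^2(\Omega)} + \alpha \norm{\mathbf{v}}_{L^2(\Omega)} \right]
\end{equation*}
for a.e.\ $t \in I$, exactly as in the bang-bang-off setting.

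It then remains to solve this pointwise problem over the~$L^2(\Omega)$-ball explicitly. Writing~$p=\bar{p}(t)$ and optimizing first over the direction of~$\mathbf{v}$ at a fixed radius~$r=\norm{\mathbf{v}}_{L^2(\Omega)}$, the Cauchy--Schwarz inequality shows that the linear term is minimized by the aligned choice~$\mathbf{v}=-r\,p/\norm{p}_{L^2(\Omega)}$ (for~$p \neq 0$), reducing the problem to~$\min_{0 \leq r \leq M} (\alpha - \norm{p}_{L^2(\Omega)})\,r$. A case distinction on the sign of~$\alpha-\norm{p}_{L^2(\Omega)}$ then yields~$r=M$ when~$\norm{p}_{L^2(\Omega)}>\alpha$, $r=0$ when~$\norm{p}_{L^2(\Omega)}<\alpha$, and any~$r \in [0,M]$ when~$\norm{p}_{L^2(\Omega)}=\alpha$, which is precisely the claimed three-case formula.

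The step I expect to require the most care is the rigorous interchange of integration and minimization: one must invoke a measurable selection argument to ensure that the family of pointwise minimizers assembles into an admissible element of~$L^2(Q)$, and to handle the exceptional behaviour on the null set where~$\bar{p}(t)=0$ (which can coincide with~$\norm{\bar{p}(t)}_{L^2(\Omega)}=\alpha$ only in the degenerate case~$\alpha=0$). This reduction is standard for pointwise-decoupled functionals of this type, so I would either cite it or simply reproduce the short argument already employed for Proposition~\ref{prop:optimalitybangbangoff}.
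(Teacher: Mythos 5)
Your proposal is correct and follows essentially the same route as the paper: uniqueness via injectivity of $K$ and strict convexity of $j$, reduction to the partially linearized problem via Theorem~\ref{thm:optimality}, pointwise-in-$t$ decoupling, and an explicit case distinction for the minimization over the $L^2(\Omega)$-ball (which the paper leaves implicit but which your Cauchy--Schwarz argument carries out correctly). Your remark on measurable selection is a reasonable extra precaution that the paper does not address explicitly.
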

\begin{proof}
The proof follows the same steps as Lemma~\ref{prop:optimalitybangbangoff}. First, uniqueness of the minimizer follows due to the strict convexity~$j$. Since~$\optu$ is a minimizer to it also solves
\begin{align*}
\min_{v \in L^2(Q)} \left \lbrack (\bar{p}, v)_{L^2(Q)}+ \alpha~\int^T_0 \norm{v(t)}_{L^2(\Omega)}~\mathrm{d} t\right \rbrack
\end{align*}
This is the case if and only if
\begin{align*}
\optu(t) \in \argmin_{\mathbf{v} \in L^2(\Omega)} \left \lbrack (\bar{p}(t), \mathbf{v})_{L^2(\Omega)}+ \alpha \norm{\mathbf{v}}_{L^2(\Omega)}\right \rbrack
\end{align*}
for a.e.~$t \in I$. The claimed result now follows immediately by a case distinction.
\end{proof}
Consequently we have~$\norm{\optu(t)}_{L^2(\Omega)}=\{0,M\}$ for a.e.~$t \in I$ if~$\|\bar{p}(t)\|_{L^2(\Omega)}=\alpha$ only holds on a zeroset. Similar to the bang-bang-off example, we will now show that strengthened first order conditions,~\eqref{eq:strenghtfirstorder}, for Problem~\eqref{def:irecsparsered} are implied by growth assumptions on~$\norm{\bar{p}(\cdot)}_{L^2(\Omega)}$ in the vicinity of the set
\begin{align*}
\operatorname{meas} \left\{\,t \in I:\;\|\bar{p}(t)\|_{L^2(\Omega)}=\alpha \,\right\}.
\end{align*}
\begin{assumption} \label{ass:direcsparse}
There is~$C>0$ and~$\kappa \in (0,1]$ such that
\begin{align*}
\operatorname{meas} \left\{\,t \in I:\;\big |\;\|\bar{p}(t)\|_{L^2(\Omega)}-\alpha\, \big | \leq \eps \,\right\} \leq C \eps^{\kappa}
\end{align*}
for all~$\eps>0$.
\end{assumption}
We first require the following auxiliary result.
\begin{lemma}
\label{lem:quadgrowthl2}
Let~$p \in L^2(\Omega)$ be given and define~$u=p/\norm{p}_{L^2(\Omega)}$. Then there is~$\sigma>0$ with
\begin{align}
\norm{p}_{L^2(\Omega)}-(p,v)_{L^2(\Omega)} \geq 2\sigma \norm{p}_{L^2(\Omega)} \norm{u-v}^2_{L^2(\Omega)} \quad \forall v,~ \norm{v}_{L^2(\Omega)} \leq 1.
\end{align}
\end{lemma}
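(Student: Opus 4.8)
The plan is to normalize and reduce the claim to a one-line algebraic identity. Writing $\norm{\cdot}=\norm{\cdot}_{L^2(\Omega)}$ and $(\cdot,\cdot)=(\cdot,\cdot)_{L^2(\Omega)}$ for brevity, I would first record that $p=\norm{p}\,u$, which is legitimate precisely because $u=p/\norm{p}$ is defined (so $p\neq 0$ is implicitly assumed). Since $\norm{u}=1$, this gives $(p,v)=\norm{p}\,(u,v)$, so the left-hand side factors as
\begin{align*}
\norm{p}-(p,v)=\norm{p}\left(1-(u,v)\right),
\end{align*}
while the right-hand side equals $2\sigma\,\norm{p}\,\norm{u-v}^2$. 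Dividing through by the common positive factor $\norm{p}$, it therefore suffices to produce $\sigma>0$ with $1-(u,v)\geq 2\sigma\,\norm{u-v}^2$ for all $v$ satisfying $\norm{v}\leq 1$.

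Next I would expand the square, using $\norm{u}=1$, as
\begin{align*}
\norm{u-v}^2=\norm{u}^2-2(u,v)+\norm{v}^2=1-2(u,v)+\norm{v}^2,
\end{align*}
and test the explicit value $\sigma=1/4$. With this choice the reduced inequality reads $1-(u,v)\geq\tfrac{1}{2}\left(1-2(u,v)+\norm{v}^2\right)$; multiplying out and cancelling the terms $-2(u,v)$ on both sides collapses it to $2\geq 1+\norm{v}^2$, that is, to $\norm{v}^2\leq 1$. This is exactly the hypothesis on $v$, and the inequality becomes an equality precisely when $\norm{v}=1$. Hence $\sigma=1/4$ is admissible, uniformly in $p$.

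I expect no genuine obstacle here: the statement is an exact restatement of the constraint $\norm{v}\leq 1$ after normalization, and the constant $\sigma=1/4$ works independently of $p$. The only delicate point worth flagging is the degenerate case $p=0$, in which $u$ is undefined; I would treat the lemma under the tacit assumption $p\neq 0$ (otherwise both sides vanish and the claim is vacuous). A minor stylistic choice is whether to keep $\sigma$ abstract or to state the sharp value $\sigma=1/4$ explicitly, the latter being preferable since it makes the constant transparent for later use.
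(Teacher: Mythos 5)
Your proof is correct, and it takes a genuinely different route from the paper. The paper deliberately avoids the inner-product expansion: it invokes the abstract fact that the $L^2$ norm is uniformly convex of power type $2$ (with modulus constant $\sigma$), and then runs a supremum argument over auxiliary perturbations $z$ with $\|z\|_{L^2(\Omega)}\leq \sigma\|u-v\|_{L^2(\Omega)}^2$, using that $(u+v)/2+z$ stays in the unit ball. That argument is longer and leaves $\sigma$ implicit, but it has the virtue of working verbatim in any power-type-$2$ uniformly convex Banach space, so it signals that the lemma is not really a Hilbert-space fact. Your computation exploits the parallelogram structure directly: after dividing by $\|p\|_{L^2(\Omega)}$ and expanding $\|u-v\|_{L^2(\Omega)}^2=1-2(u,v)_{L^2(\Omega)}+\|v\|_{L^2(\Omega)}^2$, the claimed inequality with $\sigma=1/4$ collapses to $\|v\|_{L^2(\Omega)}^2\leq 1$, which is the hypothesis. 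This is shorter, self-contained (no citation needed), and yields the sharp constant $\sigma=1/4$ (sharp, as testing $v=-u$ shows), which could be propagated into the constant $\theta$ of Proposition 5.14 if one cared about explicit constants. Your remark about the degenerate case $p=0$ is also apt: the paper tacitly assumes $p\neq 0$, and indeed the lemma is only ever applied with $\|\bar p(t)\|_{L^2(\Omega)}\geq\alpha>0$.
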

\begin{proof}
Since~$L^2(\Omega)$ is a Hilbert space, the norm~$\norm{\cdot}_{L^2(\Omega)}$ is uniformly convex of power type~$2$,  i.e there is~$\sigma >0$ such that for every~$u_1,u_2,u_3 \in L^2(\Omega)$ we have
\begin{align*}
\norm{u_1}_{L^2(\Omega)}, \norm{u_2}_{L^2(\Omega)}\leq 1,~\norm{u_3}_{L^2(\Omega)} \leq \sigma \norm{u_1-u_2}^2_{L^2(\Omega)}  \Rightarrow \left \|(u_1+u_2) /2+u_3 \right\|_{L^2(\Omega)} \leq 1,
\end{align*}
see e.g. \cite{zong}. Now fix~$v \in L^2(\Omega)$,~$\norm{v}_{L^2(\Omega)}\leq 1 $, and let~$z \in L^2(\Omega)$ with~$\norm{z}_{L^2(\Omega)} \leq \sigma \norm{u-v}^2_{L^2(\Omega)}$ be arbitrary. Then using that $(u,p)_{L^2(\Omega)}= \|p\|_{L^2(\Omega)}$ we have
\begin{equation}\label{eq:aux1}
\begin{array}{lll}
\norm{p}_{L^2(\Omega)}&-&({p},v )_{L^2(\Omega)}=( {p}, u-v)_{L^2(\Omega)}\\[1.5ex]
&=& -2\, ( {p}, (u+v)/2 +z )_{L^2(\Omega)}+ 2\, ( {p},z+u )_{L^2(\Omega)} \geq   2\, ( {p}, z)_{L^2(\Omega)}
\end{array}
\end{equation}
where we used~$\norm{(u+v)/2 +z}_{L^2(\Omega)}\leq 1 $ and thus
\begin{align*}
-2\, ( {p}, (u+v)/2 +z )_{L^2(\Omega)} \geq -2 \norm{{p}}_{L^2(\Omega)}=-2\, ( {p}, u )_{L^2(\Omega)}.
\end{align*}
Taking the supremum over all~$z \in L^2(\Omega)$ with~$\norm{z}_{L^2(\Omega)} \leq \sigma \norm{u-v}^2_{L^2(\Omega)}$ on both sides of \eqref{eq:aux1} yields
\begin{align*}
\|p\|_{L^2(\Omega)}- ({p},u )_{L^2(\Omega)} \geq 2\, \sup_{\norm{z}_{L^2(\Omega)}\leq \sigma \norm{u-v}^2_{L^2(\Omega)} } ({p}, z )_{L^2(\Omega)}=2 \sigma \norm{{p}}_{L^2(\Omega)}\, \norm{u-v}^2_{L^2(\Omega)}
\end{align*}
for all $\|v\|_{L^2(\Omega)}\le 1$. This  finishes the proof.
\end{proof}

\begin{proposition} \label{prop:quadgrowthfordirecsparse}
Let Assumption \ref {ass:direcsparse} with~$\kappa \in (0,1]$ hold. Then there exists~$\theta>0$ such that
\begin{align}
(\bar{p},u-\bar{u})_{L^2(I;L^2(\Omega))}+ \alpha \int_I \left( \norm{u(t)}_{L^2(\Omega)}-\norm{\optu(t)}_{L^2(\Omega)} \right)~\mathrm{d}t \geq \theta \norm{u-\bar{u}}^{1+1/\kappa}_{L^1(I;L^2(\Omega))}.
\end{align}
for all~$u \in \mathcal{U}_{ad}$.
\end{proposition}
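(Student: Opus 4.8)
The plan is to reduce the inequality to a pointwise estimate on the integrand combined with an interpolation argument driven by Assumption~\ref{ass:direcsparse}. By Proposition~\ref{prop:optimalitydirecsparse}, for a.e.\ $t\in I$ the value $\optu(t)$ minimizes $J_t(w)=(\bar p(t),w)_{L^2(\Omega)}+\alpha\norm{w}$ over the ball $\{\,w\in L^2(\Omega)\mid \norm{w}\le M\,\}$, where throughout $\norm{\cdot}=\norm{\cdot}_{L^2(\Omega)}$. Hence the left-hand side equals $\int_I\Delta(t)\,\de t$ with the nonnegative pointwise gap $\Delta(t)=J_t(u(t))-J_t(\optu(t))\ge 0$, and, abbreviating $\delta(t)=\norm{u(t)-\optu(t)}$ and $d(t)=\big|\,\norm{\bar p(t)}-\alpha\,\big|$, the goal becomes
\begin{align*}
\int_I\Delta(t)\,\de t \ \ge\ \theta\left(\int_I\delta(t)\,\de t\right)^{1+1/\kappa}.
\end{align*}
Observe that $\delta(t)\le 2M$ a.e., since $u,\optu\in\mathcal{U}_{ad}$.

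First I would establish the pointwise bound
\begin{align*}
\Delta(t)\ \ge\ c_0\,\min\left\{\delta(t)^2,\ d(t)\,\delta(t)\right\}\qquad\text{for a.e. }t\in I,
\end{align*}
with a constant $c_0=c_0(\alpha,M)>0$. On the inactive set $\{\,\norm{\bar p(t)}<\alpha\,\}$ one has $\optu(t)=0$, so Cauchy--Schwarz gives $\Delta(t)\ge(\alpha-\norm{\bar p(t)})\,\delta(t)=d(t)\,\delta(t)$; the transition set $\{\,\norm{\bar p(t)}=\alpha\,\}$ carries $d(t)=0$ and is harmless because $\Delta\ge 0$. On the active set $\{\,\norm{\bar p(t)}>\alpha\,\}$ one has $\optu(t)=-M\bar p(t)/\norm{\bar p(t)}$; writing $w=u(t)$, $r=\norm{w}\le M$ and $c=(\bar p(t),w)/(\norm{\bar p(t)}\,r)$, an elementary computation gives
\begin{align*}
\Delta(t)=\frac{\norm{\bar p(t)}}{2M}\,\tau+d(t)\,s,\qquad s=M-r\ge0,\quad \tau=2Mr(1+c)\ge0,\quad \delta(t)^2=s^2+\tau .
\end{align*}
Here the angular part is \emph{genuinely quadratic} with coefficient $\norm{\bar p(t)}/(2M)\ge\alpha/(2M)$ --- this is precisely the uniform convexity of $\norm{\cdot}$ quantified in Lemma~\ref{lem:quadgrowthl2} --- while the radial part is linear with the degenerate weight $d(t)$. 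Splitting according to $s^2\ge\delta^2/2$ (where $\Delta\ge d\,s\ge d\,\delta/\sqrt2$) versus $s^2<\delta^2/2$, hence $\tau>\delta^2/2$ (where $\Delta\ge\tfrac{\alpha}{4M}\delta^2$), yields the asserted minimum.

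It then remains to integrate this bound. For a threshold $\eps\in(0,1]$ I would decompose $I$ into $\{d\le\eps\}$, $\{d>\eps,\ \delta\le\eps\}$ and $\{d>\eps,\ \delta>\eps\}$ and estimate the three contributions to $\int_I\delta$ separately: $\int_{\{d\le\eps\}}\delta\le 2M\operatorname{meas}\{d\le\eps\}\le 2MC\eps^{\kappa}$ by Assumption~\ref{ass:direcsparse} and $\delta\le2M$; $\int_{\{d>\eps,\,\delta\le\eps\}}\delta\le\eps\operatorname{meas}(I)$; and, since $\Delta\ge c_0\eps\,\delta$ on the last set, $\int_{\{d>\eps,\,\delta>\eps\}}\delta\le (c_0\eps)^{-1}\int_I\Delta$. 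Adding these and using $\eps\le\eps^{\kappa}$ gives $\int_I\delta\le \tilde C\,\eps^{\kappa}+(c_0\eps)^{-1}\int_I\Delta$. Optimizing over $\eps$, i.e.\ balancing $\eps^{\kappa}$ against $\eps^{-1}\int_I\Delta$ at $\eps\sim(\int_I\Delta)^{1/(\kappa+1)}$, produces $\int_I\delta\le C'(\int_I\Delta)^{\kappa/(\kappa+1)}$, which on rearranging is exactly the claim with $q=1+1/\kappa$; the regime where the optimal $\eps$ would exceed $1$ corresponds to $\int_I\Delta$ bounded below by a fixed constant and is trivial since $\int_I\delta\le 2M\operatorname{meas}(I)$.

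I expect the \emph{power bookkeeping} in the integral step to be the main obstacle. The growth mixes a nondegenerate quadratic contribution (the angular directions, coefficient $\sim\alpha/M$) with a linear contribution carrying the degenerate weight $d(t)$ (the radial and inactive directions). The quadratic part alone would only deliver a power-$2$ lower bound, so the delicate point is that the exponent $1+1/\kappa$ is dictated by the linear part through the transition layer controlled by Assumption~\ref{ass:direcsparse}. In particular, on $\{\delta\le\eps\}$ one must use the trivial bound $\int\delta\le\eps\operatorname{meas}(I)$ rather than Cauchy--Schwarz, which would lose a power and spuriously yield the exponent $2+1/\kappa$; and the uniform bound $\delta\le 2M$ is what lets the quadratic contribution (power $2\le 1+1/\kappa$) be absorbed without degrading the rate.
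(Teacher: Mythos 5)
Your proof is correct, and while it rests on the same two pillars as the paper's argument --- the active/inactive splitting of $I$ with the radial/angular decomposition on the active set, the uniform convexity of $\norm{\cdot}_{L^2(\Omega)}$ for the angular directions, and Assumption~\ref{ass:direcsparse} to control the transition layer --- your execution is genuinely different and arguably cleaner. The paper never forms a single pointwise gap bound: it estimates the inactive integral and the two active contributions separately, each with its own threshold $\eps$ chosen in terms of $\norm{u-\optu}_{L^1}$, weakens the pointwise quadratic angular bound to the exponent $1+1/\kappa$ using $1/\kappa\ge 1$ and boundedness, pulls the power outside the integral with Jensen's inequality, and recombines the pieces with the triangle inequality and another application of Jensen. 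You instead package everything into the single pointwise estimate $\Delta(t)\ge c_0\min\{\delta(t)^2,\,d(t)\delta(t)\}$ (your algebraic identity $\Delta=\tfrac{\norm{\bar p}}{2M}\tau+d\,s$ with $\delta^2=s^2+\tau$ checks out, including the degenerate case $u(t)=0$) and then run one three-way layer-cake decomposition in $(d,\delta)$ with a single threshold optimized at $\eps\sim(\int_I\Delta)^{1/(1+\kappa)}$. This avoids Jensen entirely, makes the source of the exponent $1+1/\kappa$ transparent (it comes from balancing the $\eps^\kappa$ transition-layer loss against the $\eps^{-1}$ gain on the well-separated set), and handles the quadratic angular contribution and the degenerate linear radial/inactive contributions in one stroke rather than three. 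The price is that your constants are less explicit in terms of the problem data, but the logical structure is tighter; in particular your uniform treatment sidesteps a slightly delicate point in the paper's inactive-set estimate, where the chosen $\eps$ involves $\norm{u-\optu}_{L^1(I)}$ while the quantity being bounded lives on $\mathcal{I}$ only.
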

\begin{proof}
For the proof we can borrow ideas from  \cite[Chapter 9]{Poerner} where a stationary optimal control problem with pointwise, i.e. affine,  control constraints is considered. We
fix an arbitrary~$u \in \mathcal{U}_{ad}$. If not stated otherwise, in the following~$c>0$ denotes a generic constant independent of~$u$. We start by decomposing
\begin{align*}
(\bar{p},u-\bar{u})_{L^2(I;L^2(\Omega))}&+ \alpha \int_I \left( \norm{u(t)}_{L^2(\Omega)}-\norm{\optu(t)}_{L^2(\Omega)} \right)~\mathrm{d}t \\ &=
\int_{\norm{\bar{p}(t)} > \alpha} (\bar{p}(t),u(t)-\bar{u}(t))_{L^2(\Omega)}+ \alpha \left( \norm{u(t)}_{L^2(\Omega)}-\norm{\optu(t)}_{L^2(\Omega)} \right)~\mathrm{d} t\\ &~~~~~+ \int_{\norm{\bar{p}(t)} < \alpha} (\bar{p}(t),u(t))_{L^2(\Omega)}+ \alpha \norm{u(t)}_{L^2(\Omega)}~\mathrm{d}t
\end{align*}
using that ~$\bar{u}(t)=0$ whenever~$\norm{\bar{p}(t)}_{L^2(\Omega)}< \alpha$. For~$\eps>0$ introduce the subsets
\begin{align*}
\mathcal{I}_\eps \coloneqq \left\{\,t \in I\;|\;\norm{\bar{p}(t)} \leq \alpha -\eps\,\right\} \subset \mathcal{I} \coloneqq \left\{\,t \in I\;|\;\norm{\bar{p}(t)} < \alpha \,\right\}.
\end{align*}
Now we estimate
\begin{align*}
\int_{\norm{\bar{p}(t)} < \alpha} (\bar{p}(t),u(t))_{L^2(\Omega)}+ \alpha \norm{u(t)}_{L^2(\Omega)}~\mathrm{d}t &\geq \int_{\mathcal{I}_\eps} \left(\alpha- \norm{\bar{p}(t)}_{L^2(\Omega)} \right)\norm{u(t)}_{L^2(\Omega)}~\mathrm{d}t \\ & \geq
 \eps \norm{u}_{L^1(\mathcal{I}_\eps;L^2(\Omega))} \\ &= \eps \norm{u-\optu}_{L^1(\mathcal{I};L^2(\Omega))} - \eps \norm{u-\optu}_{L^1(\mathcal{I}\setminus \mathcal{I}_\eps;L^2(\Omega))} \\ &\geq \eps \norm{u-\optu}_{L^1(\mathcal{I};L^2(\Omega))} - 2M \eps \operatorname{meas} \left\{\mathcal{I}\setminus \mathcal{I}_\eps\right\}
 \\ & \geq \eps \norm{u-\optu}_{L^1(\mathcal{I};L^2(\Omega))}-\tilde c\eps^{1+\kappa},
\end{align*}
where $\tilde c$ denotes a constant for which without loss of generality we assume $\tilde c \ge 1$. Again we utilized~$\bar{u}(t)=0$ for~$t \in \mathcal{I}$ as well as Assumption~\ref{ass:direcsparse}. Choosing~$\eps= \tilde c^{-\frac{2}{\kappa}}\|u-\bar u\|^{\frac{1}{\kappa}}_{L^1(I;L^2(\Omega))}  $  yields
the existence of a constant $c>0$ such that
\begin{align} \label{eq:diresestpart1}
\int_{\norm{\bar{p}(t)} < \alpha} (\bar{p}(t),u(t))_{L^2(\Omega)}+ \alpha \norm{u(t)}_{L^2(\Omega)}~\mathrm{d}t \geq c \norm{u-\optu}^{1+1/\kappa}_{L^1(\mathcal{I};L^2(\Omega))}.
\end{align}
Similarly define
\begin{align*}
\mathcal{A}_\eps \coloneqq \left\{\,t \in I\;|\;\norm{\bar{p}(t)} \geq \alpha +\eps\,\right\} \subset \mathcal{A} \coloneqq \left\{\,t \in I\;|\;\norm{\bar{p}(t)} > \alpha \,\right\}.
\end{align*}

It remains to estimate from below the expression
\begin{align*}
\int_{\mathcal{A}} (\bar{p}(t)&,u(t)-\bar{u}(t))_{L^2(\Omega)} + \alpha \left( \norm{u(t)}_{L^2(\Omega)}-\norm{\optu(t)}_{L^2(\Omega)} \right)~\mathrm{d} t \\ &= \int_{\mathcal{A}} \left(\norm{\bar{p}(t)}_{L^2(\Omega)}-\alpha\right) | M-\norm{u(t)}_{L^2(\Omega)} |~\mathrm{d}t \\&~~~~~~ +\int_{\mathcal{A}} \left(\bar{p},\norm{u(t)}_{L^2(\Omega)}\left(\bar{p}(t)/\norm{\bar{p}(t)}_{L^2(\Omega)}\right)+u(t)\right)_{L^2(\Omega)}~\mathrm{d} t.
\end{align*}
Here we have used
\begin{align*}
-(\bar{p}(t),\optu(t))_{L^2(\Omega)}=M\norm{\bar{p}(t)}_{L^2(\Omega)}=\norm{\bar{u}(t)}_{L^2(\Omega)} \norm{\bar{p}(t)}_{L^2(\Omega)},~\norm{u(t)}_{L^2(\Omega)} \leq M
\end{align*}
for a.e~$t \in \mathcal{A}$. The first term above is lower bounded by
\begin{align*}
 \int_{\mathcal{A}} \left(\norm{\bar{p}(t)}_{L^2(\Omega)}-\alpha\right) &| M-\norm{u(t)}_{L^2(\Omega)} |~\mathrm{d}t \\&\geq  \int_{\mathcal{A}_\eps} \left(\norm{\bar{p}(t)}_{L^2(\Omega)}-\alpha\right) | M-\norm{u(t)}_{L^2(\Omega)} |~\mathrm{d}t
 \\ & \geq \eps \big\|\;\|u(\cdot)\|_{L^2(\Omega)}-M \big\|_{L^1(\mathcal{A}_\eps)}
 \\ & \geq \eps \big\|\:\|u(\cdot)\|_{L^2(\Omega)}-M\big\|_{L^1(\mathcal{A})}-\eps \big\|\;\|u(\cdot)\|_{L^2(\Omega)}-M \big\|_{L^1(\mathcal{A}\setminus \mathcal{A}_\eps)}.
\end{align*}
As before, by a suitable choice of~$\eps>0 $, we thus arrive at
\begin{align} \label{eq:quadest1help}
\int_{\mathcal{A}} \left(\norm{\bar{p}(t)}_{L^2(\Omega)}-\alpha\right) &|\; \norm{\bar{u}(t)}_{L^2(\Omega)}-\norm{u(t)}_{L^2(\Omega)} |~\mathrm{d}t \geq c \big\| \; \|u(\cdot)\|_{L^2(\Omega)}-M\big\|^{1+1/\kappa}_{L^1(\mathcal{A})}.
\end{align}
Next we show that there holds
\begin{align*}
\left(\bar{p}(t),\norm{u(t)}_{L^2(\Omega)}\left(\bar{p}(t)/\norm{\bar{p}(t)}_{L^2(\Omega)}\right)+u(t)\right)_{L^2(\Omega)} \geq c  \big\| u(t)-\norm{u(t)}_{L^2(\Omega)} \left(\optu(t)/M\right)\big\|^{1+1/\kappa}_{L^2(\Omega)}
\end{align*}
for a.e~$t\in \mathcal{A}$. Of course, this trivially holds if~$u(t)=0$. Otherwise,~if $u(t)\neq 0$, we rewrite
\begin{align*}
(\bar{p}(t),\norm{u(t)}_{L^2(\Omega)}&(\bar{p}(t)/\norm{\bar{p}(t)}_{L^2(\Omega)})+u(t))_{L^2(\Omega)} \\ &=
\norm{u(t)}_{L^2(\Omega)}(\bar{p}(t),\bar{p}(t)/\norm{\bar{p}(t)}_{L^2(\Omega)}+u(t)/\norm{u(t)}_{L^2(\Omega)})_{L^2(\Omega)},
\end{align*}
and apply Lemma~\ref{lem:quadgrowthl2} as well as~$\norm{u(t)}_{L^2(\Omega)}\leq M$,~$\norm{\bar{p}(t)}_{L^2(\Omega)}\geq \alpha$ for a.e~$t\in \mathcal{A}$ to obtain
\begin{align*}
\norm{u(t)}_{L^2(\Omega)}\big(\bar{p}(t)&,\bar{p}(t)/\norm{\bar{p}(t)}_{L^2(\Omega)}+u(t)/\norm{u(t)}_{L^2(\Omega)}\big)_{L^2(\Omega)} \\ &\geq
2\sigma \norm{\bar{p}(t)}_{L^2(\Omega)} \norm{u(t)}_{L^2(\Omega)} \big\|\;\bar{p}(t)/\norm{\bar{p}(t)}_{L^2(\Omega)}+u(t)/\norm{u(t)}_{L^2(\Omega)}\;\big\|^2_{L^2(\Omega)} \\ &\geq
\frac{2\sigma\alpha}{M} \big\|\;\norm{u(t)}_{L^2(\Omega)}(\bar{p}(t)/\norm{\bar{p}(t)}_{L^2(\Omega)})+u(t)\;\big\|^2_{L^2(\Omega)} \\
& \geq c \big\|\;\norm{u(t)}_{L^2(\Omega)}(\bar{p}(t)/\norm{\bar{p}(t)}_{L^2(\Omega)})+u(t)\;\big\|^{1+1/\kappa}_{L^2(\Omega)}.
\end{align*}
Note that, in the last inequality, we have also used~$1/\kappa \geq 1$ and $u\in \mathcal{U}_{ad}$. Thus by integrating over~$\mathcal{A}$ and applying Jensen's integral inequality we get
\begin{align} \label{eq:eq:quadest2help}
\int_{\mathcal{A}} (\bar{p},&\norm{u(t)}_{L^2(\Omega)}\big(\bar{p}(t)/\norm{\bar{p}(t)}_{L^2(\Omega)})+u(t)\big)_{L^2(\Omega)}~\mathrm{d} t \nonumber \\
&\geq c \int_{\mathcal{A}} \big\|\, u(t)-\norm{u(t)}_{L^2(\Omega)} \left(\optu(t)/M\right)\big\|^{1+1/\kappa}_{L^2(\Omega)}~\mathrm{d}t
\nonumber \\
&\geq \frac{c}{T^{1/\kappa}} \left( \int_{\mathcal{A}} \big\| \, u(t)-\norm{u(t)}_{L^2(\Omega)} \left(\optu(t)/M\right)\big\|_{L^2(\Omega)}~\mathrm{d}t \right)^{1+1/\kappa} \nonumber \\
& =\frac{c}{T^{1/\kappa}} \big\|\, u-\norm{u(\cdot)}_{L^2(\Omega)} \left(\optu/M\right)\big\|^{1+1/\kappa}_{L^1(\mathcal{A};L^2(\Omega))}
\end{align}
Combining estimates~\eqref{eq:quadest1help} and~\eqref{eq:eq:quadest2help}, and applying Jensen's inequality as well as
\begin{align*}
\norm{u-\bar{u}}_{L^1(\mathcal{A};L^2(\Omega))}
&\leq \big\|\,u-\norm{u(\cdot)}_{L^2(\Omega)} (\optu/M)\, \big\|_{L^1(\mathcal{A};L^2(\Omega))} + \big\|\, \norm{u(\cdot)}_{L^2(\Omega)} (\optu/M) - M (\bar u/M)\big\|_{L^1(\mathcal{A})} \\
&\leq \big\|\,u-\norm{u(\cdot)}_{L^2(\Omega)} (\optu/M)\, \big\|_{L^1(\mathcal{A};L^2(\Omega))} + \big\|\, \norm{u(\cdot)}_{L^2(\Omega)}-M \big\|_{L^1(\mathcal{A})}
\end{align*}
yields
\begin{align} \label{eq:diresestpart2}
\int_{\mathcal{A}} (\bar{p}(t)&,u(t)-\bar{u}(t))_{L^2(\Omega)} + \alpha ( \norm{u(t)}_{L^2(\Omega)}-\norm{\optu(t)}_{L^2(\Omega)} )~\mathrm{d} t \nonumber \\
&\geq c \left( \big\|\, \norm{u(\cdot)}_{L^2(\Omega)}-M\,\big\|^{1+1/\kappa}_{L^1(\mathcal{A})}+
\big\|\,u(t)-\norm{u(t)}_{L^2(\Omega)} (\optu(t)/M)\,\big\|^{1+1/\kappa}_{L^1(\mathcal{A};L^2(\Omega))} \right) \nonumber
\\&\geq c \norm{u-\optu}^{1+1/\kappa}_{L^1(\mathcal{A};L^2(\Omega))}.
\end{align}
Finally, combining~\eqref{eq:diresestpart1},~\eqref{eq:diresestpart2}, noting that~$I \setminus \{\mathcal{I} \cup \mathcal{A}\}$ is a set of measure, and another application of Jensen's inequality, we get
\begin{align*}
(\bar{p},u-\bar{u})_{L^2(I;L^2(\Omega))}&+ \alpha \int_I \left( \norm{u(t)}_{L^2(\Omega)}-\norm{\optu(t)}_{L^2(\Omega)} \right)~\mathrm{d}t \\
& \geq c \left( \norm{u-\optu}^{1+1/\kappa}_{L^1(\mathcal{I};L^2(\Omega))}+\norm{u-\optu}^{1+1/\kappa}_{L^1(\mathcal{A};L^2(\Omega))} \right) \\ & \geq
\theta \norm{u-\optu}^{1+1/\kappa}_{L^1(\mathcal{I} \cup\mathcal{A};L^2(\Omega))}
= \theta \norm{u-\optu}^{1+1/\kappa}_{L^1(I;L^2(\Omega))}
\end{align*}
for some~$\theta>0$. This completes the proof.
\end{proof}
To complete the discussion of~Problem~\eqref{def:irecsparsered}, we note that the GCG descent direction~$v^k$ can be given explicitly once the current adjoint state is computed.
\begin{lemma}
Let~$u \in \mathcal{U}_{ad}$ be arbitrary and let~$p$ denote the solution of~\eqref{E2.1} with~$y=Ku$. Abbreviating~$\norm{\cdot}=\norm{\cdot}_{L^2(\Omega)}$ set
\begin{align*}
\bar{v}= \begin{cases}
-M \frac{p(t)}{\norm{p(t)}} & \, \norm{p(t)} \geq \alpha \\
0 & \, \text{else}
\end{cases}
\end{align*}
for a.e.~$t\in I$.
Then~$\bar{v} \in \mathcal{U}_{ad}$ is a minimizer of
\begin{align*}
\min_{v \in \mathcal{U}_{ad}} \left \lbrack (p,v)_{L^2(Q)}+ \alpha \int^T_0 \|v(t)\|_{L^2(\Omega)}~\mathrm{d}t \right \rbrack.
\end{align*}
\end{lemma}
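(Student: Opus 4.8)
The plan is to exploit the separable, pointwise-in-time structure of the objective, exactly as in the proof of Proposition~\ref{prop:optimalitydirecsparse}. Since $(p,v)_{L^2(Q)}+\alpha \int_0^T \norm{v(t)}_{L^2(\Omega)}~\mathrm{d}t = \int_0^T \left[(p(t),v(t))_{L^2(\Omega)}+\alpha\norm{v(t)}_{L^2(\Omega)}\right]~\mathrm{d}t$ and the admissible set $\mathcal{U}_{ad}$ is described by the pointwise constraint $\norm{v(t)}_{L^2(\Omega)}\le M$ for a.e.\ $t\in I$, the minimization decouples across $t$. Thus a measurable $v \in \mathcal{U}_{ad}$ is a minimizer if and only if $v(t)$ minimizes the integrand $\mathbf{v}\mapsto (p(t),\mathbf{v})_{L^2(\Omega)}+\alpha\norm{\mathbf{v}}_{L^2(\Omega)}$ over the ball $\{\mathbf{v}\in L^2(\Omega) : \norm{\mathbf{v}}_{L^2(\Omega)}\le M\}$ for a.e.\ $t\in I$. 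I would first record this reduction, invoking the same interchange-of-infimum-and-integration argument used in Proposition~\ref{prop:optimalitydirecsparse}.

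Next I would solve the pointwise problem explicitly by a polar decomposition of the argument. Writing $\mathbf{v}=r\omega$ with $r=\norm{\mathbf{v}}_{L^2(\Omega)}\in[0,M]$ and $\norm{\omega}_{L^2(\Omega)}=1$, the integrand becomes $r\left[(p(t),\omega)_{L^2(\Omega)}+\alpha\right]$. For fixed $r>0$ and $p(t)\neq 0$, the Cauchy--Schwarz inequality shows the optimal direction is $\omega=-p(t)/\norm{p(t)}_{L^2(\Omega)}$, which attains $(p(t),\omega)_{L^2(\Omega)}=-\norm{p(t)}_{L^2(\Omega)}$, reducing the problem to $\min_{r\in[0,M]} r\left(\alpha-\norm{p(t)}_{L^2(\Omega)}\right)$. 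A case distinction on the sign of $\alpha-\norm{p(t)}_{L^2(\Omega)}$ then yields $r=M$ (and $\mathbf{v}=-M p(t)/\norm{p(t)}_{L^2(\Omega)}$) when $\norm{p(t)}_{L^2(\Omega)}>\alpha$, $r=0$ when $\norm{p(t)}_{L^2(\Omega)}<\alpha$, and an arbitrary $r\in[0,M]$ when $\norm{p(t)}_{L^2(\Omega)}=\alpha$; when $p(t)=0$ the value is $\alpha r\ge 0$, so $r=0$ is optimal and is consistent with the $\norm{p(t)}_{L^2(\Omega)}<\alpha$ branch. This coincides exactly with the prescribed $\bar v(t)$, so $\bar v$ realizes the pointwise minimum for a.e.\ $t$.

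Finally I would verify that $\bar v$ is an admissible competitor, i.e.\ genuinely lies in $\mathcal{U}_{ad}$: since $p\in L^2(Q)$ the map $t\mapsto \norm{p(t)}_{L^2(\Omega)}$ is measurable, the superlevel set $\{t : \norm{p(t)}_{L^2(\Omega)}\ge\alpha\}$ is measurable, and $p(t)/\norm{p(t)}_{L^2(\Omega)}$ is measurable where defined, so $\bar v$ is a measurable $L^2(\Omega)$-valued function with $\norm{\bar v(t)}_{L^2(\Omega)}\in\{0,M\}$, whence $\bar v\in\mathcal{U}_{ad}$. The only genuinely nonroutine step is the reduction in the first paragraph, namely justifying that pointwise minimization of the integrand is equivalent to minimizing the integral over $\mathcal{U}_{ad}$; this rests on a standard measurable-selection argument, which I would cite from Proposition~\ref{prop:optimalitydirecsparse} rather than reproduce, the remaining computation being elementary.
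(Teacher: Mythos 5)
Your proposal is correct and follows essentially the same route as the paper, which simply refers back to the pointwise-in-time reduction and case distinction of Proposition~\ref{prop:optimalitydirecsparse}; your polar-decomposition computation and the measurability check are just a more explicit write-up of that argument.
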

\begin{proof}
This can be proven with the same arguments as in Proposition~\ref{prop:optimalitydirecsparse}.
\end{proof}
Finally we demonstrate Algorithm~\ref{alg:gcg} on the following setting:
\begin{example} \label{ex:direcsparsity}
Let~$\Omega=[0,1]$ and~$T=1$. The conductivity~$a$ is chosen as~$a=0.7$. Moreover we set
\begin{align*}
y_d(x,t)=\sin (2\pi x_1)\sin(2 \pi x_2)\sin(\pi t) \exp(2x_1)/6,~\beta=0.0035,~M=0.8.
\end{align*}
\end{example}

The state equation is discretized by an implicit Euler method with stepsize~$\tau=1/500$ in time and by linear finite elements on a uniform triangulation of~$\Omega$ with gridsize~$h=1/64$. The set of admissible constraints~$\mathcal{U}_{ad}$ is also discretized using continuous piecewise linear function on the same grid. The adjoint equation is discretized consistently. As before, Algorithm~\ref{alg:gcg} is run for a maximum of~$1000$ iterations or until~$\Psi(u^K)\leq 10^{-10}$ for some~$k \in\N$. In the second case we use~$\optu \approx u^K $ in order to compute the residual and all other relevant quantities. The Quasi-Armijo parameters are again chosen as~$\alpha=0.5$ and~$\gamma=0.99$.
\begin{figure}[htb]
\begin{subfigure}[t]{.48\linewidth}
\centering
\includegraphics[scale=0.5]{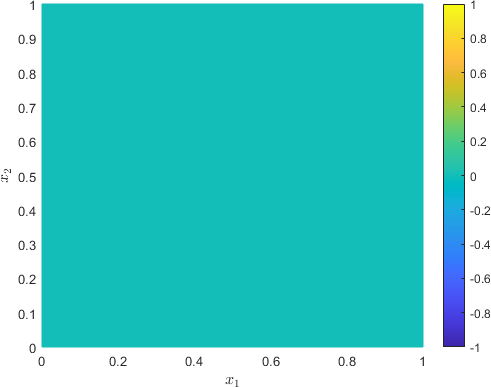}
\caption{Optimal control~$\optu(t)$,~$t=0.2$.}
\label{fig:sol}
\end{subfigure}
\quad
\begin{subfigure}[t]{.48\linewidth}
\centering
\includegraphics[scale=0.5]{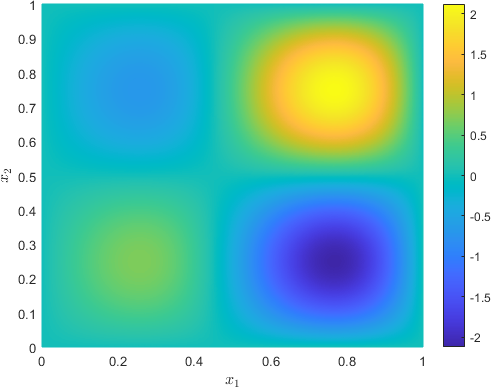}
\caption{Optimal control~$\optu(t)$,~$t=0.4$.}
\label{fig:adjointval}
\end{subfigure}
\begin{subfigure}[t]{.48\linewidth}
\centering
\includegraphics[scale=0.5]{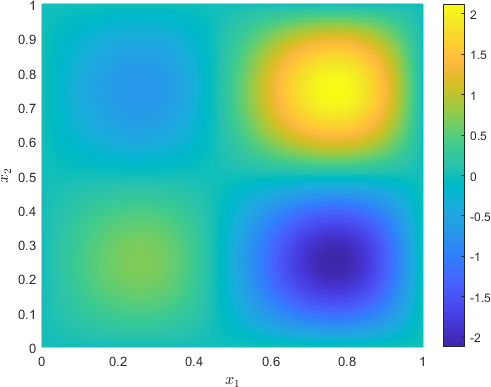}
\caption{Optimal control~$\optu(t)$,~$t=0.6$.}
\label{fig:sol}
\end{subfigure}
\quad
\begin{subfigure}[t]{.48\linewidth}
\centering
\includegraphics[scale=0.5]{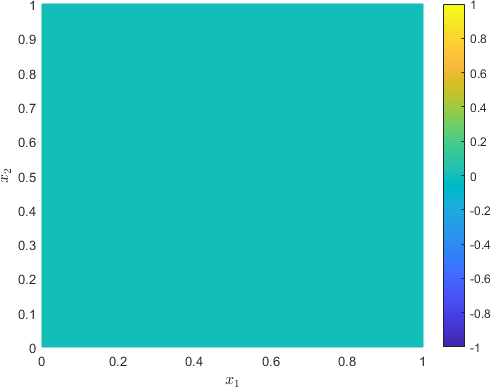}
\caption{Optimal control~$\optu(t)$,~$t=0.8$.}
\label{fig:adjointval}
\end{subfigure}
%\quad
%\begin{subfigure}[t]{.31\linewidth}
%\centering
%\includegraphics[scale=0.38]{hessian.pdf}
%\caption{Second derivative~$\bar{p}''$.}
%\label{fig:hessian}
%\end{subfigure}
\caption{Snapshots of the optimal control~$\optu$ at several time instances.}
\label{fig:optcontrolparab}
\end{figure}
\begin{figure}[htb]
\begin{subfigure}[t]{.48\linewidth}
\centering
\includegraphics[scale=0.5]{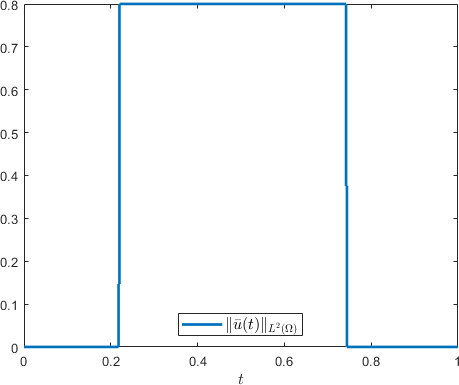}
\caption{$L^2(\Omega)$-norm of~$\optu(t)$.}
\end{subfigure}
\quad
\begin{subfigure}[t]{.48\linewidth}
\centering
\includegraphics[scale=0.5]{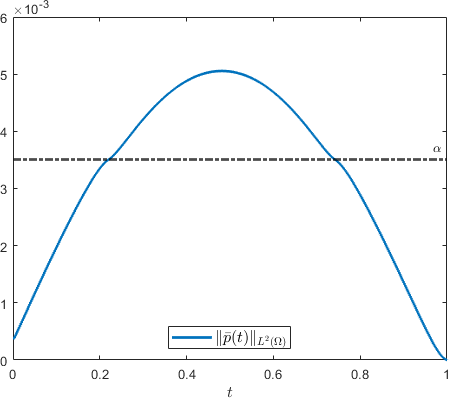}
\caption{$L^2(\Omega)$-norm of~$\bar{p}(t)$.}
\label{fig:normspara}
\end{subfigure}
\caption{Evolution of~$\norm{\bar{u}(t)}_{L^2(\Omega)}$ and $\norm{\bar{p}(t)}_{L^2(\Omega)}$.}
\label{fig:sparsitydirec}
\end{figure}
\begin{figure}[htb]
\begin{subfigure}[t]{.31\linewidth}
\centering
\includegraphics[scale=0.38]{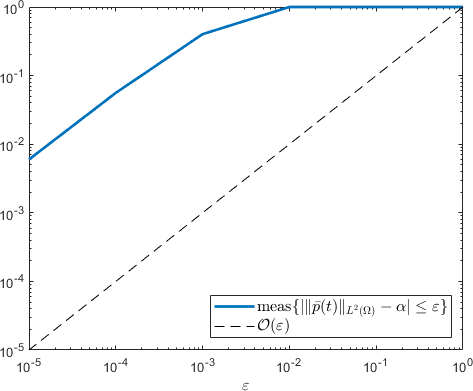}
\caption{Assumption~\ref{ass:direcsparse}.}
\label{fig:measpara}
\end{subfigure}
\quad
\begin{subfigure}[t]{.31\linewidth}
\centering
\includegraphics[scale=0.38]{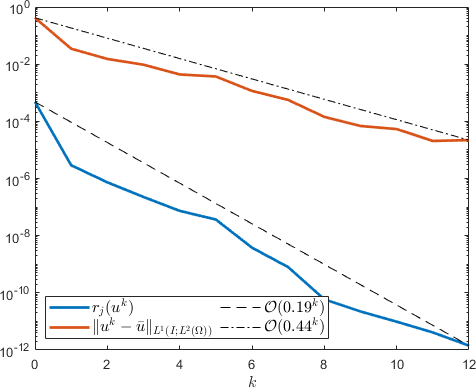}
\caption{Residuals and iterates.}
\label{fig:resanditspara}
\end{subfigure}
\quad
\begin{subfigure}[t]{.31\linewidth}
\centering
\includegraphics[scale=0.38]{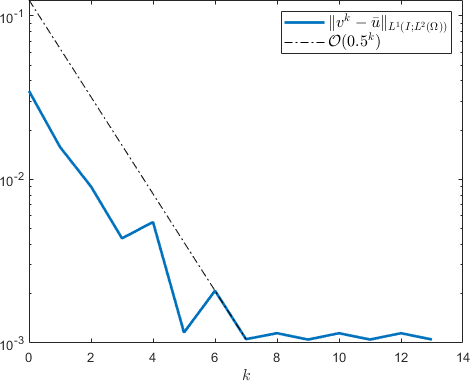}
\caption{Descent directions.}
\label{fig:descentparab}
\end{subfigure}
\caption{Verification of Assumption~\ref{ass:direcsparse} and convergence of relevant quantities.}
\label{fig:convpara}
\end{figure}

The computed results can be found in Figure~\ref{fig:optcontrolparab},~\ref{fig:sparsitydirec} and~\ref{fig:convpara}, respectively. Most notably,~Algorithm~\ref{alg:gcg} terminates after mere~$13$ iterations. Snapshots of the optimal control~$\optu$ at~$t=0.2,0.4,0.6,0.8$, are plotted in Figure~\ref{fig:optcontrolparab}. In order to assess the sparsity pattern of the minimizer, the~$L^2(\Omega)$-norms of~$\optu(t)$ and~$\bar{p}(t)$ are shown in Figure~\ref{fig:normspara}. As predicted by Proposition~\ref{prop:optimalitydirecsparse} there holds~$\bar{u}(t)=0$ if~$\norm{{\bar{p}(t)}}_{L^2(\Omega)}<\alpha$. Moreover the set
\begin{align*}
\left\{\,t \in I\;|\;\norm{\bar{p}(t)}=\alpha\,\right\}
\end{align*}
is a zeroset and thus~$\norm{\optu(t)}_{L^2(\Omega)} \in\{0,M\}$ for a.e.~$t\in I$.
Next we numerically verify the growth condition from Assumption~\ref{ass:direcsparse} with~$\kappa=1$ in Figure~\ref{fig:measpara}. Accordingly, Theorem~\ref{thm:fastconvergence1} predicts a linear rate of convergence for all relevant quantities. Indeed, this is clearly observed for the residuals~$r_j(u^k)$ as well as the iterates~$u^k$, see~Figure~\ref{fig:resanditspara}. For~$v^k$ a similar behavior can be deduced from Figure~\ref{fig:descentparab}, at least in the earlier iterations. However, for~$k \geq 7$, the error stops decreasing. We suppose that this is a numerical artifact which is caused by the approximation of~$\optu$ by~$u^K$. Finally, the overall computational time amounts to around $300$s. Altogether, these observations confirm our theoretical results and further highlight the practical utility of GCG methods for nonsmooth minimization.
\bibliographystyle{siam}
\bibliography{references}
\end{document}